\documentclass[12pt]{amsart}
\usepackage{amsmath}

\usepackage{amssymb}
\usepackage[dvipsnames]{xcolor}
\usepackage{amsthm}

\usepackage{graphicx}
\usepackage{tikz}
\usepackage{tkz-graph}
\usetikzlibrary {arrows.meta}

  \newtheorem{theorem}{Theorem}[section]
  \newtheorem{lemma}[theorem]{Lemma}
   \newtheorem{proposition}[theorem]{Proposition}
  \newtheorem{definition}[theorem]{Definition}
  \newtheorem{corollary}[theorem]{Corollary}
  \newtheorem{example}[theorem]{{Example}}
  \newtheorem{problem}[theorem]{{Problem}}

  \def\ZZ{\mathbb{Z}}

  \def\Im{\mathrm{im}}

  \def\sgn{\mathrm{sign}}
  
 \def\img{\mathrm{im}}

  \def\Cube{{\normalfont {\textrm{Cube}}}}

     \def\inv{{\mathrm{INV}}}
       \def\flip{{\mathrm{FLIP}}}
     \def\HH{{\mathcal{H}}}

\title{Monophobic  Neighborhoods of Cubes}

\author[Greene]{Curtis Greene}

\address[Curtis Greene]
{
Haverford College, Haverford, PA 19041, USA
}
\email{cgreene@haverford.edu}

\author[Jamil]{Samira Sahar Jamil}

\address[Samira Sahar Jamil]
{
University of Notre Dame, Notre Dame, IN 46556 USA
}
\email{sjamil@nd.edu}

\thanks{
This material is based upon work done while the second author was in residence at the Simons Laufer Mathematical Sciences Institute (formerly MSRI) in Berkeley, California, during 2024, supported by the National Science Foundation under Grant No. DMS-1928930 and by the Alfred P. Sloan Foundation under Grant G-2021-16778.}

\begin{document}

\begin{abstract} 
We define a $d$-cube in a graph $G$ to be monophobic in $G$ if, whenever it supports a  $d$-face of a discrete singular $d+1$-cube, it must also support another face. 
In many cases, the presence of such cubes implies the nontriviality of discrete cubical homology and homotopy groups associated with $G$. We describe several large classes of
examples and characterize the property completely when $d=2$.
\end{abstract}

\date{ \today}

\maketitle

\section{Introduction}
This note describes a technique for proving that certain elements of discrete homotopy groups $A_d(G,v_0)$ and discrete cubical homology 
groups $H^{\Cube}_d(G)$ of graphs $G$ are nontrivial. The ambient discrete theory was originally developed for metric spaces in \cite{BKLW},
later refined for graphs in \cite{BCW}, \cite{BBLL} and subsequent papers. See \cite{BKLW} and \cite{BCW} for precise definitions and notation. The present note introduces the concept of {\em monophobicity}, identifying faces of a cube which are forced to appear more than once in the boundary of a higher dimensional singular cube.
 This extends a technique used in \cite{BGJW1} to prove the 
nontriviality of a certain homology group (\cite{BGJW1}, Theorem 6.1). While our original impetus was to find a natural setting for algebraic results in \cite{BGJW1}, much of this paper is about purely combinatorial properties of 
maps from discrete cubes to graphs and may also be understood in this light.

Section 2 summarizes the main results, and Section 3 contains proofs. Section 4 gives a complete characterization of 
monophobic neighborhoods in dimension $2$, and Section 5 explores preliminary steps toward a similar characterization in higher dimensions. Section 6 discusses the notion of natural cycles, which appear in several examples modeling discrete spheres.
Section 7 contains thoughts about possible future directions for this work.

\section{Summary of Results}

All of our graphs $G$ are finite and undirected. A {\em rigid $d$-cube} in $G$ is an induced subgraph $H$ isomorphic to $Q_d$, the standard discrete $d$-cube. 
A {\em singular $d$-cube in $G$} is a graph map 
$\sigma: Q_d \to G$, that is, a vertex map such that if $v_1 \sim v_2$ then $\sigma(v_1) \sim \sigma(v_2)$ or $v_1 = v_2$. A singular $d$-cube is {\em supported by $H$} if $\textrm{im}(\sigma)=V(H)$. The following property of rigid cubes will be used frequently:
\begin{lemma}
If $H$ is a rigid $d$-cube in $G$ and $\sigma$ is a singular $d$-cube supported by $H$, then $\sigma$ is uniquely determined by its values on any face of $H$.
\end{lemma}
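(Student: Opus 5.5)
The plan is to show first that $\sigma$ is a graph isomorphism $Q_d \to H$, and then to use the rigid structure of $Q_d$ to recover $\sigma$ from its values on a face. Throughout I take ``face'' to mean a $(d-1)$-dimensional face of $Q_d$ (a facet), and I write $\sigma(F)$ for the image of such a face, which is the corresponding face of $H$. For faces of lower dimension the statement fails: for instance, the values on a single vertex do not determine an automorphism of $Q_d$ when $d\ge 2$. So the facet reading is the one I will prove.

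\textbf{Step 1: $\sigma$ is a bijection.} Since $\mathrm{im}(\sigma)=V(H)$ and $|V(H)|=2^d=|V(Q_d)|$, the vertex map $\sigma$ is surjective between sets of equal size, hence bijective.

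\textbf{Step 2: $\sigma$ is an isomorphism onto $H$.} Because $\sigma$ is injective, no edge of $Q_d$ is collapsed. Hence every edge $v_1\sim v_2$ of $Q_d$ maps to an edge $\sigma(v_1)\sim\sigma(v_2)$ of $G$, and this edge lies in $H$ because $H$ is induced. This gives an injective map from $E(Q_d)$ to $E(H)$. Both edge sets have $d2^{d-1}$ elements, so this map is a bijection. Therefore $\sigma$ is a graph isomorphism $Q_d\to H$, and in particular it carries faces of $Q_d$ to faces of $H$.

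\textbf{Step 3: reconstruction from a facet.} Let $F=\{x_i=0\}$ be a facet of $Q_d$, and suppose $\sigma|_F$ is known. Every vertex $v\notin F$ has the form $v=u+e_i$ with $u\in F$, and $v$ is the unique neighbor of $u$ lying outside $F$. Since $\sigma$ is an isomorphism, $\sigma(F)$ is a $(d-1)$-subcube of $H\cong Q_d$, and $\sigma(v)$ must be a neighbor of $\sigma(u)$ lying outside $\sigma(F)$. In a $d$-cube, each vertex of a facet has exactly one neighbor outside that facet. So $\sigma(v)$ is forced to be this unique neighbor, and $\sigma$ is determined on all of $Q_d$.

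The step needing the most care is Step 2, specifically the edge-counting argument that upgrades ``bijective graph map'' to ``isomorphism.'' This is what requires $H$ to be induced: it guarantees that $H$ has exactly $d2^{d-1}$ edges and no extra adjacencies. Once Step 2 is in place, Step 3 is a direct structural observation about cubes.
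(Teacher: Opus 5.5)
Your proof is correct, and reading ``face'' as a $(d-1)$-face matches the paper's usage (the faces $F_i^{\epsilon_i}Q_d$). Your remark that the claim fails for lower-dimensional faces when $d\ge 2$ is also right.

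The paper gives no proof of this lemma; it is stated as an evident property of rigid cubes, so there is no written argument to compare yours against. The fact the paper relies on later is group-theoretic. Once a singular cube supported by $H$ is known to be an isomorphism $Q_d\to H$, any two such cubes differ by an automorphism of $Q_d$, i.e.\ an element of $\HH_d$. This is exactly the convention $f_i^{\epsilon_i}\sigma=P\beta_0^H$ used in the Main Lemma, where the paper speaks of ``uniquely determined'' symmetries. The lemma then follows because a signed permutation matrix fixing every vertex of a facet is the identity, since those vertices span $\mathbb{R}^d$.

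Your Step 2 proves the fact that this route takes for granted, namely that a bijective graph map onto an induced copy of $Q_d$ is an isomorphism. Your Step 3 replaces the appeal to the structure of $\mathrm{Aut}(Q_d)$ with a direct reconstruction, which is more elementary and self-contained.

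One small tightening of Step 3: you do not need the notion of a ``facet of $H$.'' By Step 2, the neighbors of $\sigma(u)$ in $H$ are exactly the $d$ vertices $\sigma(w)$ with $w\sim u$. By injectivity, exactly one of these, $\sigma(v)$, lies outside $\sigma(F)$. Hence $\sigma(v)$ is determined by $\sigma|_F$ and $H$ alone.
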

For $d\ge 0$, the $\ZZ$-module generated by all singular $d$-cubes is denoted by $C_d(G)$, with associated homology groups $H_d(G)$  as defined in \cite{BCW}. 
A {\em based graph map} is a map $\alpha: (I_\infty^d, \partial I_\infty^d) \to (G,v_0)$, where $I_\infty^d \approx \ZZ^d$ is the {\em infinite subdivided $d$-cube}, $v_0\in V(G) $,   and $\alpha$ stabilizes to $v_0$ in all directions. An associated discrete homotopy theory (defined, e.g.,  in \cite{BKLW} and  \cite{BBLL}) yields homotopy groups $A_d(G,v_0), d\ge 0$.
\begin{definition}  If $H$ is a rigid $d$-cube in $G$, then $H$ is {\em monophobic in $G$} if, whenever a singular $(d+1)$-cube $\sigma\in C_{d+1}(G)$ contains a face supported by $H$, it must contain at least one more face supported by $H$. 
\end{definition}
Note that this is a {\em local} property: it depends only on the {\em $1$-neighborhood $N_G(H)$},   the induced subgraph of $G$ generated by $H$ and all adjacent vertices. Clearly $H$ is monophobic in $G$ if and only if $H$ is monophobic in $N_G(H)$, and in such a case we also describe   $N_G(H)$ as a 
{\em monophobic neighborhood of $H$}.  If $N_G(H)$ is not monophobic, we say that it is {\em monotolerant}.
\begin{example} \label{mpexamples}Examples of $2$-cubes $H$ (displayed in red) with  monophobic neighborhoods $N_G(H)$.
\end{example}
\begin{center}
 \qquad
\begin{tikzpicture}[scale=.6]  
\tikzset{VertexStyle/.style = {shape= circle,
ball color = red,inner sep= 0pt,outer sep= 0pt,minimum size=6 pt}}   
\Vertex[x=3 ,y=3,L=$$]{1}
   \Vertex[x=5 ,y=3,L=$$]{2}
   \Vertex[x=3 ,y=5,L=$$]{3}
   \Vertex[x=5 ,y=5,L=$$]{4}
   \tikzset{VertexStyle/.style = {shape= circle,
ball color =  black,inner sep= 0pt,outer sep= 0pt,minimum size=6 pt}}     
 \Vertex[x=3 ,y=1,L=$$]{5}
   \Vertex[x=5 ,y=1,L=$$]{6}
   \Vertex[x=7 ,y=3,L=$$]{7}
   \Vertex[x=7 ,y=5,L=$$]{8}
   \Vertex[x=5 ,y=7,L=$$]{9}
   \Vertex[x=3 ,y=7,L=$$]{10}
   \Vertex[x=1 ,y=5,L=$$]{11}
   \Vertex[x=1 ,y=3,L=$$]{12}
    \Edge[style={ultra thick},color=red](1)(2)  \Edge[style={ultra thick},color=red](2)(4)  \Edge[style={ultra thick},color=red](3)(4)
      \Edge[style={ultra thick},color=red](1)(3) 
      \Edge(1)(5)\Edge(2)(6)\Edge(5)(6)
       \Edge(2)(7)\Edge(4)(8)\Edge(7)(8)
       \Edge(4)(9)\Edge(3)(10)\Edge(9)(10)
       \Edge(3)(11)\Edge(1)(12)\Edge(11)(12)
       \node  [color=blue] at (4,-.5) {$(A)$};
  \end{tikzpicture}
  \qquad\qquad
 \begin{tikzpicture}[scale=.6]. 
\tikzset{VertexStyle/.style = {shape= circle,
ball color = red,inner sep= 0pt,outer sep= 0pt,minimum size= 6 pt}}   
   \Vertex[x=3 ,y=3+.2,L=$$]{1}
   \Vertex[x=5 ,y=3+.2,L=$$]{2}
   \Vertex[x=3 ,y=5+.2,L=$$]{3}
   \Vertex[x=5 ,y=5+.2,L=$$]{4}
   \tikzset{VertexStyle/.style = {shape= circle,
ball color =  black,inner sep= 0pt,outer sep= 0pt,minimum size=6 pt}}     
   \Vertex[x=1.8 ,y=1.8+.2,L=$$]{5}
   \Vertex[x=5 ,y=1.8+.2,L=$$]{6}
      \Vertex[x=6.2 ,y=3+.2,L=$$]{7}
   \Vertex[x=6.2 ,y=6.2+.2,L=$$]{8}
   \Vertex[x=1.8 ,y=6.2+.2,L=$$]{9}
    \Edge[style={ultra thick},color=red](1)(2)  \Edge[style={ultra thick},color=red](2)(4)  \Edge[style={ultra thick},color=red](3)(4)
      \Edge[style={ultra thick},color=red](1)(3) 
      \Edge(1)(5)\Edge(2)(6)\Edge(5)(6)\Edge(4)(6)
           \Edge(2)(7)\Edge(4)(8)\Edge(7)(8)\Edge(3)(9)\Edge(8)(9)\Edge(5)(9)
       \node  [color=blue] at (4,-.4) {$(B)$};
  \end{tikzpicture}
   \qquad\qquad
  \begin{tikzpicture}[scale=.7]
   \tikzset{VertexStyle/.style = {shape= circle,
ball color =  red,inner sep= 0pt,outer sep= 0pt,minimum size=6 pt}}     
\Vertex[x=2 ,y=2.5,L=$$]{1}
   \Vertex[x=5 ,y=2.5,L=$$]{2}
   \Vertex[x=2 ,y=5.5,L=$$]{3}
   \Vertex[x=5 ,y=5.5,L=$$]{4}
   \tikzset{VertexStyle/.style = {shape= circle,
ball color =  black,inner sep= 0pt,outer sep= 0pt,minimum size=6 pt}}     
   \Vertex[x=3.5 ,y=3.5,L=$$]{5}
   \Vertex[x=6.5 ,y=3.5,L=$$]{6}
      \Edge[style={ultra thick},color=red](1)(2)  \Edge[style={ultra thick},color=red](2)(4)  \Edge[style={ultra thick},color=red](3)(4)
    \Edge[style={ultra thick},color=red](1)(3)       
    \Edge(1)(5)\Edge(3)(5)
      \Edge(2)(6)\Edge(4)(6)
     \node  [color=blue] at (4,-.5) {$(C)$};
  \end{tikzpicture}

 \end{center}
\noindent
Monophobicity of these neighborhoods can be verified directly by case-by-case analysis, or by using either Theorems \ref{dim2theorem} or \ref{suffcondition}, to be proved later.

\begin{example}\label{nmpexamples} Examples of $2$-cubes $H$ with non-monophobic (monotolerant) neighborhoods $N_G(H)$:
\end{example}

\medskip

\begin{center}
\begin{tikzpicture}[scale=.7]
\tikzset{VertexStyle/.style = {shape= circle,
ball color = red,inner sep= 0pt,outer sep= 0pt,minimum size= 6 pt}}   
\Vertex[x=3 ,y=3,L=$$]{1}
   \Vertex[x=5 ,y=3,L=$$]{2}
   \Vertex[x=3 ,y=5,L=$$]{3}
   \Vertex[x=5 ,y=5,L=$$]{4}
   \tikzset{VertexStyle/.style = {shape= circle,
ball color =  black,inner sep= 0pt,outer sep= 0pt,minimum size=6 pt}}     
   \Vertex[x=4 ,y=1,L=$$]{5}
   \Vertex[x=7 ,y=4,L=$$]{6}
   \Vertex[x=4 ,y=7,L=$$]{7}
   \Vertex[x=1 ,y=4,L=$$]{8}
    \Edge[style={ultra thick},color=red](1)(2)  \Edge[style={ultra thick},color=red](2)(4)  \Edge[style={ultra thick},color=red](3)(4)
    \Edge[style={ultra thick},color=red](1)(3)       
    \Edge(1)(5)\Edge(2)(5)\Edge(8)(5)\Edge(5)(6)
      \Edge(2)(6)\Edge(4)(6)\Edge(6)(7)
      \Edge(4)(7)\Edge(3)(7)\Edge(7)(8)
      \Edge(3)(8)\Edge(1)(8)
  \node  [color=blue] at (4,-.5) {$(D)$};
  \end{tikzpicture}
 \qquad
 \begin{tikzpicture}[scale=.7]
\tikzset{VertexStyle/.style = {shape= circle,
ball color = red,inner sep= 0pt,outer sep= 0pt,minimum size=6 pt}}   
\Vertex[x=2 ,y=2,L=$$]{1}
   \Vertex[x=5 ,y=2,L=$$]{2}
   \Vertex[x=4 ,y=4,L=$$]{3}
   \Vertex[x=7 ,y=4,L=$$]{4}
   \tikzset{VertexStyle/.style = {shape= circle,
ball color =  black,inner sep= 0pt,outer sep= 0pt,minimum size=6 pt}}     
   \Vertex[x=4.5 ,y=7,L=$$]{5}
      \Edge[style={ultra thick},color=red](1)(2)  \Edge[style={ultra thick},color=red](2)(4)  \Edge[style={ultra thick},color=red](3)(4)
    \Edge[style={ultra thick},color=red](1)(3)       
   \Edge(2)(5)
      \Edge(3)(5)\Edge(4)(5)
     \node  [color=blue] at (4,0) {$(E)$};
  \end{tikzpicture}
   \qquad\qquad
  \begin{tikzpicture}[scale=.7]
\tikzset{VertexStyle/.style = {shape= circle,
ball color = red,inner sep= 0pt,outer sep= 0pt,minimum size= 7 pt}}   
\Vertex[x=2 ,y=2.5,L=$$]{1}
   \Vertex[x=5 ,y=2.5,L=$$]{2}
   \Vertex[x=2 ,y=5.5,L=$$]{3}
   \Vertex[x=5 ,y=5.5,L=$$]{4}
   \tikzset{VertexStyle/.style = {shape= circle,
ball color =  black,inner sep= 0pt,outer sep= 0pt,minimum size=6 pt}}     
   \Vertex[x=3.5 ,y=3.5,L=$$]{5}
   \Vertex[x=6.5 ,y=3.5,L=$$]{6}
      \Edge[style={ultra thick},color=red](1)(2)  \Edge[style={ultra thick},color=red](2)(4)  \Edge[style={ultra thick},color=red](3)(4)
    \Edge[style={ultra thick},color=red](1)(3)       
    \Edge(1)(5)\Edge(3)(5)
      \Edge(2)(6)\Edge(4)(6)\Edge(5)(6)
     \node  [color=blue] at (4,-.5) {$(F)$};
  \end{tikzpicture}

\end{center}

\noindent
A triangle affords the simplest example of a monotolerant neighborhood. For example, suppose that $N_G(H)$ is labeled as shown:
\begin{center}
 \begin{tikzpicture}[scale=.7]
\tikzset{VertexStyle/.style = {shape= circle,
ball color = red,inner sep= 0pt,outer sep= 0pt,minimum size= 6 pt}}   
\Vertex[LabelOut,Lpos=180,x=2 ,y=2,L=${\color{red}1}$]{1}
   \Vertex[LabelOut,Lpos=0,x=5 ,y=2,L=${\color{red}2}$]{2}
   \tikzset{VertexStyle/.style = {shape= circle,
ball color =  black,inner sep= 0pt,outer sep= 0pt,minimum size=6 pt}}     
   \Vertex[LabelOut,Lpos=180,x=3.5 ,y=4,L=$3$]{3}
      \Edge[style={ultra thick},color=red](1)(2)  \Edge[style={ultra thick},color=red](1)(2)  
   \Edge[style={thick},color=black](1)(3)  \Edge[style={thick},color=black](2)(3) 
    \end{tikzpicture}
  \end{center}
  
   \medskip\noindent
Then we can construct a singular 2-cube $\sigma=$
    \raisebox{-1.5em}{
\begin{tikzpicture}[scale=1]
\tikzset{VertexStyle/.style = {shape= circle,
ball color = gray,text= black,inner sep= 0pt,outer sep= 0pt,minimum size= 5pt}} 
   \Vertex[LabelOut,Lpos=180,x=0 ,y=0,L=${\color{red}1}$]{1}
     \Vertex[LabelOut,Lpos=0,x=1 ,y=0,L=${\color{red}2}$]{2}
       \Vertex[LabelOut,Lpos=180,x=0 ,y=1,L=${\color{red}3}$]{3}
         \Vertex[LabelOut,Lpos=0,x=1 ,y=1,L=${\color{red}3}$]{4}
    \Edge[style={thick},color=red](1)(2)
      \Edge[style={thick},color=blue](3)(4)
        \Edge[style={thick},color=blue](1)(3)
          \Edge[style={thick},color=blue](2)(4)
    \end{tikzpicture}
    }
    in which the base 1-cube $H = 12$ appears only once. In such cases we call $\sigma$ a {\em monotolerant witness} for $N_G(H)$.

\begin{definition}A rigid $d$-cube $H$ in a graph $G$ {\em has a grid neighborhood} if its $1$-neighborhood $N_G(H)$ can be embedded in the $1$-neighborhood of a $d$-cube in $I_\infty^d$.
\end{definition}
\noindent
Example 2.3(A) displays a $2$-cube with a grid neighborhood.  Example 2.3(B) plays a central role in Example \ref{ex3.3} below. 
In Example 2.4(D), the neighborhood is a $3$-cube with additional edges, yielding a monotolerant witness immediately. Note the small difference between Examples 2.3(C) and Example 2.4(F).

\begin{theorem}[First Main Theorem]\label{maintheorem}
If $G$ is a graph and $H$ is a rigid $d$-cube in $G$ with $H$ monophobic in $G$, then
\begin{enumerate}
\item 
If $\gamma \in C_d(G)$ is a homology cycle containing exactly one nonzero term supported by $H$, then $[\gamma]\ne [0]$ in $H_d(G)$.
\item If $\alpha: (I_\infty^d, \partial I_\infty^d) \to (G,v_0)$ is a based graph map in which small cubes supported by $H$ appear an odd number of times, then
$[\alpha] \ne [0]$ in $A_d(G,v_0)$.
\end{enumerate}
\end{theorem}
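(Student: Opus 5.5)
We will argue both parts by a parity count on the cubes supported by $H$, reduced modulo $2$. For a chain $c=\sum_i a_i\sigma_i\in C_d(G)$, written in the basis of (nondegenerate) singular $d$-cubes, let $\phi(c)\in \ZZ/2$ be the sum of the coefficients $a_i$ over those $\sigma_i$ supported by $H$, taken mod $2$. By the Lemma on rigid cubes, a singular $d$-cube supported by $H$ is determined by its values on any face. So such cubes are exactly the graph isomorphisms $Q_d\to H$, and in particular none of them is degenerate. Hence $\phi$ is well defined on $C_d(G)$, including after passing to the quotient by degenerate cubes used in \cite{BCW}.

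The key step is to show that $\phi(\partial\tau)=0$ for every singular $(d+1)$-cube $\tau$. The boundary of $\tau$ is $\sum_{i}(-1)^i(\tau_{i}^- - \tau_i^+)$, a signed sum of its $2(d+1)$ faces. Every face supported by $H$ contributes $\pm1\equiv 1 \pmod 2$, so $\phi(\partial\tau)$ equals the number of faces of $\tau$ supported by $H$, mod $2$.

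Monophobicity alone only rules out this number being exactly $1$, so I first need to understand its structure. If a face $\tau_i^{\pm}$ is supported by $H$, then the vertex sets $\tau(F)$ and $\tau(F')$ of opposite faces $F,F'$ are joined by a perfect matching of edges mapped to edges or collapsed. I would try to show that the number of faces supported by $H$ is always even whenever it is positive. If that fails, I would instead reduce to the weaker statement that suffices, as follows.

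For part (1), suppose $\gamma=\partial c$ with $c=\sum b_j\tau_j$. The terms supported by $H$ in $\partial c$ come from faces of the $\tau_j$. Since $\gamma$ contains exactly one nonzero term supported by $H$, we have $\phi(\gamma)$ or rather the $H$-count nonzero. The cleanest route is therefore to prove parity evenness for each $\tau$, possibly via a finer invariant: instead of $\ZZ/2$, sum the coefficients of $H$-supported cubes weighted by the sign of the isomorphism $Q_d\to H$ (orientation), and check that this weighted sum vanishes on $\partial\tau$. That vanishing would follow if the faces supported by $H$ pair up with cancelling oriented signs, and monophobicity is what guarantees that no face is left unpaired. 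This pairing is the main obstacle, and I would attempt it by analysing which coordinate directions $\tau$ collapses.

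Part (2) follows the same pattern. By \cite{BBLL}, a null-homotopy of $\alpha$ is a graph map $I^{d+1}_\infty\to G$ whose small $(d+1)$-cubes each have even $H$-count by the step above. Summing these counts over all small cubes, interior faces are counted twice, and the result equals the $H$-count of $\alpha$ plus that of the constant map, modulo $2$. The odd count for $\alpha$ then gives a contradiction. The adjustments for stabilization and for the shrinking and stretching in the homotopy relation only insert degenerate cubes, which never support $H$.
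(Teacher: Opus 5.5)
Your architecture is the paper's: a linear invariant on $C_d(G)$ that vanishes on boundaries for part (1), and a parity count of $H$-supported faces over the small $(d+1)$-cubes of a null-homotopy for part (2). But the step that carries all the content is left open (you call it ``the main obstacle''), so this is a plan rather than a proof.

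Monophobicity only excludes a $(d+1)$-cube with \emph{exactly one} $H$-supported face. By itself it does not exclude three, five, and so on. The missing ingredient is the structural lemma the paper derives from rigidity: a singular $(d+1)$-cube has $0$, $1$, $2$ or $4$ faces supported by $H$. The argument runs as follows.
\begin{enumerate}
\item Suppose non-opposite faces $f_i^a\sigma$ and $f_j^b\sigma$ ($i<j$) lie on $H$. They share $f_i^af_j^b\sigma=f_{j-1}^bf_i^a\sigma$.
\item A singular cube onto $H$ is determined by one face, so also $f_i^{-a}f_j^b\sigma=f_{j-1}^{-b}f_i^a\sigma$.
\item A further application of the face relations shows that every other face $f_k^c\sigma$, $k\ne i,j$, takes identical values on two disjoint lower-dimensional subfaces. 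Hence it is not injective and cannot be supported by $H$.
\item Finally, three of $f_i^\pm\sigma, f_j^\pm\sigma$ on $H$ force the fourth.
\end{enumerate}
Only with this lemma does monophobicity yield an even count, which is exactly what your part (2) needs.

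For part (1), parity is not enough even with that lemma. If the unique $H$-term of $\gamma$ has even coefficient, then $\phi(\gamma)=0$ and no contradiction arises. So you really need the oriented invariant you allude to: the paper's $\chi$, equal to $\mathrm{sign}(P)$ when a face is $P\beta_0^H$ with $P\in\HH_d$. You must then verify its cancellation case by case.

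For an adjacent pair $F_i^{\epsilon_i},F_j^{\epsilon_j}$, rigidity makes the two faces differ by the restriction $\tilde P$ of the reflection fixing their intersection. One computes $\mathrm{sign}(\tilde P)=(-1)^{j-i-1}\epsilon_i\epsilon_j$. The ratio of boundary signs is $(-1)^{i+j}\epsilon_i\epsilon_j$, so the product is $-1$ and the terms cancel. This also settles the $4$-face case by pairing adjacent faces.

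For an opposite pair $f_k^\pm\sigma$, the map $T=(f_k^-\sigma)^{-1}\circ f_k^+\sigma$ is an automorphism of $Q_d$ moving each vertex by Hamming distance at most $1$. The key fact is that such a $T$ is one of three things:
\begin{itemize}
\item the identity, in which case $\sigma$ is degenerate;
\item an axis reflection, which has sign $-1$ and would \emph{not} cancel, but it forces the two faces of $\sigma$ in the reflected direction onto $H$ as well, putting you in the $4$-face case;
\item a quarter turn $P_iP_{ij}$, which has sign $+1$; since $f_k^+\sigma$ and $f_k^-\sigma$ carry opposite boundary signs, the terms cancel.
\end{itemize}
This quarter-turn versus reflection dichotomy is the idea your sketch (``analysing which coordinate directions $\tau$ collapses'') does not reach.
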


\begin{theorem} [Second Main Theorem]\label{gridlemma}
For all $d\ge 1$, if $H$ is a rigid $d$-cube in $G$ with a grid neighborhood, then $H$ is monophobic in $G$.
\end{theorem}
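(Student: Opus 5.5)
The plan is to reduce the statement to a question about graph maps into the integer lattice, and then settle that question with a mod $2$ degree argument.

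\textbf{Localization and transfer to $\ZZ^d$.} Let $\sigma: Q_{d+1}\to G$ be a singular $(d+1)$-cube with a face $F$ supported by $H$. Every vertex of $Q_{d+1}$ is at distance at most $1$ from $F$, and $\sigma$ does not increase distances. Hence $\operatorname{im}(\sigma)\subseteq V(N_G(H))$.

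Compose $\sigma$ with the embedding $\varphi$ of $N_G(H)$ into the $1$-neighborhood of a unit $d$-cube $U=\{0,1\}^d\subset I^d_\infty$, with $\varphi(H)=U$. The embedding preserves adjacency, so $\tau=\varphi\circ\sigma:Q_{d+1}\to I_\infty^d$ is again a graph map. Since $\varphi$ is injective, a face of $\sigma$ is supported by $H$ if and only if the corresponding face of $\tau$ is supported by $U$. It therefore suffices to prove the following.

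\emph{For every graph map $\tau:Q_{d+1}\to I^d_\infty$, the number of $d$-faces of $\tau$ supported by $U$ is even.}

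This clearly implies monophobicity. By the Lemma on rigid cubes, each such face is a bijection onto $U$. After a lattice symmetry preserving $U$ we may assume $\tau(x,0)=x$ on $F=\{x_{d+1}=0\}$, although this normalization is not needed for the parity argument.

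\textbf{Parity via a degree argument.} Extend $\tau$ to a continuous map $T:[0,1]^{d+1}\to\mathbb{R}^d$ by a piecewise-linear extension over the Freudenthal (staircase) triangulation of $[0,1]^{d+1}$. This triangulation restricts to the Freudenthal triangulation on each facet. Every edge of that triangulation is a monotone lattice step in $Q_{d+1}$, so it maps either to a point or to a unit axis step in $\ZZ^d$.

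The restriction $T|_{\partial}:S^d\to\mathbb{R}^d$ misses points of $S^d=\mathbb{R}^d\cup\{\infty\}$. Hence it has mod $2$ degree $0$, and a generic point $p$ in the open unit cube has an even number of preimages on the boundary. The preimages on each facet are then counted separately.

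If a facet is supported by $U$, its map is an isomorphism $Q_d\to U$. Its PL extension is therefore the restriction of an affine isometry, a homeomorphism onto $[0,1]^d$, so it contributes exactly one preimage. For every other facet, I must show that the number of preimages of $p$ is even. Summing over facets then gives that the number of $U$-supported facets is even, which is the claim.

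\textbf{Main obstacle.} The hard step is the last one: showing that a facet not supported by $U$ covers $p$ an even number of times. Such a facet need not avoid $p$. For example, a non-surjective face can fold over part of the unit cube, and its convex hull can contain interior points of $[0,1]^d$.

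I will first try choosing $p$ very close to the vertex $0$, in the interior of a single simplex $\Delta$ of the (signed) Freudenthal triangulation of $U$. Then a $d$-simplex of a facet triangulation covers $p$ only if it maps affinely onto $\Delta$. I will count such simplices mod $2$ inside a single facet via a pairing. The image of a facet, read along each staircase chain, is a lattice path with unit steps. If the facet is not a bijection onto $U$, some coordinate direction must be traversed twice, and the two traversals are taken in opposite senses. Toggling the corresponding step should pair up the simplices that map onto $\Delta$ with opposite orientations.

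If this pairing proves awkward, the fallback is the same mod $2$ degree argument applied to the $d$-dimensional facet itself. The facet map $Q_d\to I^d_\infty$ that is not onto $U$ has image in a subgraph whose realization avoids a neighborhood of some vertex of $[0,1]^d$, and therefore of a suitable $p$. This last point is exactly what needs checking in the grid setting. It is also where the rigidity of the grid, with no diagonal edges, must be used. Example 2.4(D) shows that the statement fails once diagonal-type edges are present.
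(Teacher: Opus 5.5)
You have set up a genuinely different route from the paper's. The paper argues purely combinatorially: it places a vertex $h_\emptyset$ missing from the opposite face at $\sigma(\vec 0)$, shows $\sigma(\vec e_{d+1})=h_k$, and then proves by induction on $|S|$ that $f_k^-\sigma$ is supported by $H$, using that two grid vertices have at most two common neighbors. Your reduction to graph maps into $\ZZ^d$, the evenness of the boundary count, and the count of exactly one preimage for each $U$-supported facet are all correct.

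However, the step you flag as the main obstacle is left unproved, and your primary plan for it rests on a false claim. A nondegenerate image of a Freudenthal $d$-simplex is an orthoscheme $w,\,w+s_1e_{\pi(1)},\dots,w+\sum_i s_ie_{\pi(i)}$ joining \emph{any} antipodal pair of vertices of $U$. So a generic $p$ lies in $2^{d-1}$ possible target simplices, one per long diagonal, and a facet simplex can cover $p$ without mapping onto your chosen $\Delta$. For $d=2$, the facet taking the values $(1,0),(0,0),(0,0),(0,1)$ at $(0,0),(1,0),(0,1),(1,1)$ is a graph map. It sends both of its triangles onto the anti-diagonal half-square, which contains every $p$ near the origin. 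Moreover, a chain whose image is nondegenerate uses each direction exactly once, so ``some direction is traversed twice'' says nothing about the simplices that actually cover $p$, and no pairing is defined. Incidentally, not every edge of the Freudenthal triangulation is a lattice step: $\vec 0\vec 1$ is an edge, and only consecutive vertices of a maximal chain are adjacent.

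The fallback has the right ingredient but misses what makes it work. The point $p$ must be a single point for all facets at once, while the vertex of $U$ omitted by a facet varies from facet to facet. So you cannot choose $p$ near ``the'' missing vertex. What you need is that, for each facet $F$, the mod $2$ local degree of $T|_F$ is the same at every point of $(0,1)^d$. This holds because that degree is locally constant off $T(\partial F)$, and $T(\partial F)\cap(0,1)^d=\emptyset$. Indeed, each $(d-1)$-simplex of $\partial F$ is a maximal Freudenthal simplex of a $(d-1)$-face of $F$, so its image is a lattice path with $d-1$ steps. Some coordinate $x_j$ never changes along it, so the image lies in a hyperplane $x_j=c$ with $c\in\ZZ$.

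Then evaluate at a generic $p$ near a vertex $w\notin\mathrm{im}(\tau|_F)$. The function $x\mapsto\|x-w\|_1$ is affine on $[0,1]^d$ and is $\ge 1$ on $U\setminus\{w\}$. Hence no image orthoscheme inside $U$ meets the corner $\{\|x-w\|_1<1\}$, and orthoschemes in other lattice cubes miss $(0,1)^d$ entirely. So this degree is $0$, and summing over facets gives your parity claim. Without this uniformity argument, the proposal as written does not yet prove the theorem.
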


Situations in which Theorems \ref{maintheorem} and \ref{gridlemma} can be applied are abundant.  We illustrate  with three examples.

\begin{example}\label{ex3.3} {\bf 10-Vertex Crown.} The following graph appeared in \cite{BGJW1} and motivated much of the present work. 
\begin{center}
\begin{tikzpicture}[scale=1.2]
\tikzset{VertexStyle/.style = {shape= circle,
ball color = black,text= black,inner sep= 0pt,outer sep= 0pt,minimum size= 5 pt}}

   \Vertex[LabelOut,Lpos=180,x=1.5,y=0,L=${\color{red}1}$]{1}
   \Vertex[LabelOut,Lpos=180,x=0 ,y=1,L=${\color{red}2}$]{2}
   \Vertex[LabelOut,Lpos=180,x=1,y=1,L=${\color{red}3}$]{3}
    \Vertex[LabelOut,Lpos=180,x=2,y=1,L=${\color{red}4}$]{4}
     \Vertex[LabelOut,Lpos=180,x=3,y=1,L=${\color{red}5}$]{5}
      \Vertex[LabelOut,Lpos=180,x=0,y=2,L=${\color{red}6}$]{6}
      \Vertex[LabelOut,Lpos=180,x=1,y=2,L=${\color{red}7}$]{7}
      \Vertex[LabelOut,Lpos=180,x=2,y=2,L=${\color{red}8}$]{8}
          \Vertex[LabelOut,Lpos=180,x=3,y=2,L=${\color{red}9}$]{9}
                 \Vertex[LabelOut,Lpos=180,x=1.5,y=3,L=${\color{red}10}$]{10}
    \Edge[style={thick},color=black](1)(2)
    \Edge[style={thick},color=black](1)(3)  \Edge[style={thick},color=black](1)(4)  \Edge[style={thick},color=black](1)(5)  
    \Edge[style={thick},color=black](2)(6) \Edge[style={thick},color=black](2)(7) \Edge[style={thick},color=black](3)(8)\Edge[style={thick},color=black](3)(6)
    \Edge[style={thick},color=black](4)(7) \Edge[style={thick},color=black](4)(9)\Edge[style={thick},color=black](5)(8)\Edge[style={thick},color=black](5)(9)
    \Edge[style={thick},color=black](6)(10)\Edge[style={thick},color=black](7)(10)\Edge[style={thick},color=black](8)(10)\Edge[style={thick},color=black](9)(10)
    \end{tikzpicture}
\end{center}
\end{example}
\noindent
There is an associated cell complex with eight quadrangular $2$-cells forming a ``natural" $2$-cycle (see Section 6 for more discussion).  One can construct a corresponding  $2$-cycle $\theta \in C_2(G)$
consisting of singular 2-cubes in $C_2(G)$: 
{\tiny
 \[
 \begin{array}{|c|c|}
 \hline
 3&6\\
 \hline
1&2\\
 \hline
 \end{array}\;+
  \begin{array}{|c|c|}
 \hline
 2&7\\
 \hline
 1&4\\
 \hline
\end{array}
\;+
  \begin{array}{|c|c|}
 \hline
 5&8\\
 \hline
 1&3\\
 \hline
\end{array}
\;+
  \begin{array}{|c|c|}
 \hline
 4&9\\
 \hline
 1&5\\
 \hline
\end{array}
\;+
 \begin{array}{|c|c|}
 \hline
 6&10\\
 \hline
 2&7\\
 \hline
\end{array}
\;+
 \begin{array}{|c|c|}
 \hline
 8&10\\
 \hline
 3&6\\
 \hline
\end{array}
\;+
 \begin{array}{|c|c|}
 \hline
 7&10\\
 \hline
 4&9\\
 \hline
\end{array}
\;+
 \begin{array}{|c|c|}
 \hline
9&10\\
 \hline
 5&8\\
 \hline
\end{array}
\]
}
None of the supporting cubes in $G$ has a grid neighborhood, but 
 as noted above in Example 2.3(B), each of these $2$-cubes is monophobic. Consequently, 
Theorem \ref{maintheorem} implies that $H_2(G)$ and $A_2(G,v_0)$ are nontrivial.

\begin{example} {\bf Boundary of Subdivided Cube.} \label{ex3.1} Let $G_m^d=\partial I_m^d, d>1, m \ge 3$, e.g. for $d=3, m=3$, we obtain the following diagram
(showing  front-visible vertices):  

\medskip
\begin{center}
\begin{tikzpicture}[scale=.5]
\tikzset{VertexStyle/.style = {shape= circle,
ball color = black,inner sep= 0pt,outer sep= 0pt,minimum size= 5pt}} 
\Vertex[x=0,y=0,L=$$]{1}
\Vertex[x=5/3,y=0,L=$$]{2}
\Vertex[x=10/3,y=0,L=$$]{3}
\Vertex[x=5,y=0,L=$$]{4}
\Vertex[x=6,y=1,L=$$]{5}
\Vertex[x=7,y=2,L=$$]{6}
\Vertex[x=8,y=3,L=$$]{7}
\Vertex[x=0,y=5/3,L=$$]{8}
\Vertex[x=5,y=5/3,L=$$]{9}
\Vertex[x=8,y=3+5/3,L=$$]{10}
\Vertex[x=0,y=10/3,L=$$]{11}
\Vertex[x=5,y=10/3,L=$$]{12}
\Vertex[x=8,y=3+10/3,L=$$]{13}
\Vertex[x=0,y=5,L=$$]{14}
\Vertex[x=5/3,y=5,L=$$]{15}
\Vertex[x=10/3,y=5,L=$$]{16}
\Vertex[x=5,y=5,L=$$]{17}
\Vertex[x=1,y=6,L=$$]{18}
\Vertex[x=6,y=6,L=$$]{19}
\Vertex[x=2,y=7,L=$$]{20}
\Vertex[x=7,y=7,L=$$]{21}
\Vertex[x=3,y=8,L=$$]{22}
\Vertex[x=5/3+3,y=8,L=$$]{23}
\Vertex[x=10/3+3,y=8,L=$$]{24}
\Vertex[x=8,y=8,L=$$]{25}
\Vertex[x=5/3,y=5/3,L=$$]{26}
\Vertex[x=5/3,y=10/3,L=$$]{27}
\Vertex[x=10/3,y=5/3,L=$$]{28}
\Vertex[x=10/3,y=10/3,L=$$]{29}
\Vertex[x=6,y=1+5/3,L=$$]{30}
\Vertex[x=6,y=1+10/3,L=$$]{29}
\Vertex[x=7,y=2+5/3,L=$$]{31}
\Vertex[x=7,y=2+10/3,L=$$]{32}
\Vertex[x=5/3+1,y=5+1,L=$$]{30}
\Vertex[x=5/3+2,y=5+2,L=$$]{29}
\Vertex[x=10/3+1,y=5+1,L=$$]{31}
\Vertex[x=10/3+2,y=5+2,L=$$]{32}
\tikzset{VertexStyle/.style = {shape= circle,
ball color = green,inner sep= 0pt,outer sep= 0pt,minimum size= 0pt}} 
\Vertex[x=3,y=3,L=$$]{99}
 \Edge[style={thick},color=black](1)(4)
  \Edge[style={thick},color=black](1)(4)
 \Edge[style={thick},color=black](1)(4)
\Edge[style={thick},color=black](8)(9)
 \Edge[style={thick},color=black](11)(12)
 \Edge[style={thick},color=black](1)(4)
 \Edge[style={thick},color=black](14)(17)
 \Edge[style={thick},color=black](18)(19)
  \Edge[style={thick},color=black](4)(17)
 \Edge[style={thick},color=black](22)(25)
 \Edge[style={thick},color=black](1)(14)
  \Edge[style={thick},color=black](2)(15)
 \Edge[style={thick},color=black](3)(16)
 \Edge[style={thick},color=black](5)(19)
 \Edge[style={thick},color=black](6)(21)
 \Edge[style={thick},color=black](7)(4)
 \Edge[style={thick},color=black](1)(4)
 \Edge[style={thick},color=black](14)(22) 
  \Edge[style={thick},color=black](17)(25)
   \Edge[style={thick},color=black](16)(24)
   \Edge[style={thick},color=black](15)(23)
    \Edge[style={thick},color=black](20)(21) 
    \Edge[style={thick},color=black](7)(25)
     \Edge[style={thick},color=black](12)(13) 
     \Edge[style={thick},color=black](9)(10)
        \Edge[style={thick},color=pink](22)(99)
           \Edge[style={thick},color=pink](1)(26)
             \Edge[style={thick},color=pink](26)(99)
             \Edge[style={thick},color=pink](7)(99)
\end{tikzpicture}
\end{center}
\medskip
\end{example}
\noindent As in the previous example, one can construct a ``natural'' cycle of singular $d$-cubes in $C_d(G_m^d) $, although it is not completely trivial to do so for general $d$ (see Section 6). In this cycle, there are evidently 
$d$-cells with a grid neighborhood, Hence Theorems \ref{maintheorem} and \ref{gridlemma} both apply,  proving that
$H_d(G_m^d) \ne (0)$ and $A_d(G_m^d,v_0) \ne (0)$. 
Note: although only the non-corner cells have grid neighborhoods, it can be shown (via Theorems \ref{dim2theorem} or \ref{suffcondition}) that all of the cells are
monophobic in $G_m^d$.

\begin{example} \label{ex3.2}
{\bf Box Suspension of a Cycle.} Let $G= (S_k^{\Box}) \ZZ_m,  k\ge 5, m \ge 5$, where $S^\Box_k G$ denotes the graph obtained from $G$ by taking  the product $Z_m \Box I_k$ and identifying the first and last component to a single point, and $\ZZ_m$ denotes a $m$-cycle. The following diagram illustrates  $S_5^\Box \ZZ_5$:

\medskip
\begin{center}
\begin{tikzpicture}[scale=.8]
\tikzset{VertexStyle/.style = {shape= circle,
ball color = black,inner sep= 0pt,outer sep= 0pt,minimum size= 5pt}} 
\Vertex[x=3,y=0,L=$$]{1}
\Vertex[x=5,y=1,L=$$]{2}
\Vertex[x=4,y=1.9,L=$$]{3}
\Vertex[x=2,y=1.9,L=$$]{4}
\Vertex[x=1,y=1,L=$$]{5}
\Vertex[x=3,y=0+1.1,L=$$]{6}
\Vertex[x=5,y=1+1.1,L=$$]{7}
\Vertex[x=4,y=1.9+1.1,L=$$]{8}
\Vertex[x=2,y=1.9+1.1,L=$$]{9}
\Vertex[x=1,y=1+1.1,L=$$]{10}
\Vertex[x=3,y=0+2.1,L=$$]{11}
\Vertex[x=5,y=1+2.1,L=$$]{12}
\Vertex[x=4,y=1.9+2.1,L=$$]{13}
\Vertex[x=2,y=1.9+2.1,L=$$]{14}
\Vertex[x=1,y=1+2.1,L=$$]{15}
\Vertex[x=3,y=0+3.1,L=$$]{16}
\Vertex[x=5,y=1+3.1,L=$$]{17}
\Vertex[x=4,y=1.9+3.1,L=$$]{18}
\Vertex[x=2,y=1.9+3.1,L=$$]{19}
\Vertex[x=1,y=1+3.1,L=$$]{20}
\Vertex[x=3,y=-.9,L=$$]{0}
\Vertex[x=3,y=7,L=$$]{21}
 \Edges[style={thick},color=black](1,2,3,4,5,1)
  \Edges[style={thick},color=black](6,7,8,9,10,6)
   \Edges[style={thick},color=black](11,12,13,14,15,11)
     \Edges[style={thick},color=black](16,17,18,19,20,16)
 \Edges[style={thick},color=black](1,6,11,16)
 \Edges[style={thick},color=black](2,7,12,17)
 \Edges[style={thick},color=black](3,8,13,18)
 \Edges[style={thick},color=black](4,9,14,19)
 \Edges[style={thick},color=black](5,10,15,20)
 \foreach \v in {1,2,3,4,5}{\Edge[style={thick},color=black](0)(\v)}
  \foreach \v in {16,17,18,19, 20}{\Edge[style={thick},color=black](21)(\v)}
 \end{tikzpicture}
 \end{center}
\medskip
\end{example}
\noindent 
As in the previous two examples, there is a cell complex with a natural $2$-cycle, and some of its $2$-cubes have 
a grid neighborhood; in fact, it can be shown (via Theorems \ref{dim2theorem} or \ref{suffcondition}) that all of the 
quadrilateral cells are monophobic,
and it is not difficult to construct a corresponding cycle in $C_2(G)$. Hence Theorems \ref{maintheorem} and 
\ref{gridlemma} apply and we obtain that 
$H_2(G) \ne (0)$.
Similarly, there are natural based graph maps $\alpha: (I_\infty^2, \partial I_\infty^2) \to (G,v_0)$ in which 
each small square appears exactly once, implying that $A_2(G,v_0) \ne (0)$. 

Examples (\ref{ex3.1}) and (\ref{ex3.2}) illustrate situations that generalize readily to higher dimensions, producing local grid 
neighborhoods. In
such cases Theorem \ref{gridlemma} applies. Example (\ref{ex3.3}) is an isolated case with no grid neighborhoods, and suggests 
searching for  more examples of this type, where Theorem \ref{gridlemma} would apply.  We state this officially as a problem:

\begin{problem}\label{mainproblem}
Find the ``right'' graphical condition (weaker than ``grid neighborhood'') equivalent to the property that a $d$-cube in $G$ is monophobic, for all $d\ge 1$.
\end{problem}

In Section 4 of this paper we solve this problem in dimension 2, characterizing monophobic neighborhoods of a $2$-cube
by means of short list of excluded configurations. In Section 5 we obtain another condition for monophobicity that is necessary 
and sufficient in dimension 2 and sufficient in dimensions $d>2$.

\section{Proofs of Lemmas and Theorems}
We begin by establishing some notation and conventions that will be used frequently.
The {\em standard $d$-cube} $Q_d$ is defined to be the set of $d$-tuples $\{(\epsilon_1,\epsilon_2,\dots,\epsilon_d )\;|\; \epsilon_i=\pm 1\}$. 
The {\em hyperoctahedral group} $\HH_d$ acts on $Q_d$ as left multiplication by {\em signed permutation matrices}. This action also provides an
action of $\HH_d$ on {\em singular $d$-cubes} $\sigma: Q_d \to G$ by $P\sigma (v) = \sigma(P^{-1} v)$, where $v$ is a vertex of $Q_d$.

The hyperoctahedral group $\HH_d$ may also be defined as the wreath product $\ZZ_2^d \rtimes S_d$, where $S_d$ denotes the symmetric group of order $d$. This leads to a canonical
representation of elements $P\in \HH_d$ as $P =P_\phi P_\theta$, where $P_\theta$ denotes a permutation of axes and $P_\phi$ is a product of
axis reflections. This representation is easily converted into {\em signed permutation} notation, e.g., $3\overline{1}4 \overline{5}2$ is interpreted as ``permute the axes according to the permutation $31452$, then negate the axes in positions $2$ and $4$". As is well known, every permutation in $S_d$ may be represented as a product of transpositions, and thus we have for example,
\[
P_1 P_3 P_{13} = (\overline{1}2 \overline{3} )(321)= \overline{3}2 \overline{1} = 
\begin{pmatrix}
 -1 & 0 & 0  \\
 0 & 1 & 0  \\
   0 & 0 & -1  
 \end{pmatrix}
\begin{pmatrix}
 0 & 0 & 1  \\
 0 & 1 & 0  \\
   1 & 0 & 0  
 \end{pmatrix}=
\begin{pmatrix}
 0 & 0 & -1  \\
0& 1 & 0  \\
  -1 & 0 &0  
 \end{pmatrix}
\]
The {\em sign} of an element $P\in \HH_d$ is defined as $(-1)^{\det(P)}$. If $P = P_\phi P_\theta$ is represented in wreath product
form as above, then
\[
\sgn(P) = (-1)^{\flip(\phi) + \inv(\theta)}
\]
where $\flip(\phi)$ denotes the number of axis reflections in $\phi$ and $\inv(\theta)$ denotes the number of inversions in $\theta$.

The $\epsilon_i$th face of $Q_d$, denoted $F_i^{\epsilon_i} Q_d$, is the set of $d$-tuples in $Q_d$ with $i$th component equal to $\epsilon_i$, and may be identified with $Q_{d-1}$ using the inherited ordering of indices.
If $\sigma: Q_d\to G$ is a singular $d$-cube, the $\epsilon_i$th face 
 $f_i^{\epsilon_i} \sigma: Q_{d-1}\to G$  is obtained by restricting $\sigma$ to $F_i^{\epsilon_i} Q_d$.  These operators
satisfy the  {\em face relations}  
(see e.g., \cite{BGJW3}) 
\[
f^\epsilon_i f^\theta_j = f^\theta_{j-1} f^\epsilon_i , \;\; i<j.
\]
The {\em support} of a face $f_i^{\epsilon_i} \sigma$ is its image $\img f_i^{\epsilon_i} \sigma \subseteq G$. 

\begin{lemma}\label{lemma0124}
For any $d \ge 1$, if $H$  is a rigid $d$-cube in $G$ and $\sigma$ is a singular $(d+1)$-cube, then $\sigma$ has either $0$, $1$, $2$, or $4$ faces supported by $H$. If it has $2$ faces, they are either 
adjacent or opposite (this is true of {\em any} pair of faces). If it has $4$ faces, they ``wrap around" the cube in a cycle of length $4$ containing two opposite pairs.
\end{lemma}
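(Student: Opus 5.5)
The plan is to reduce everything to the rigidity of $H$, via the Lemma that a singular cube supported by a rigid cube is determined by its values on any face. First I would record a basic observation. If a face $f_i^{\epsilon}\sigma$ is supported by $H$, then $f_i^{\epsilon}\sigma$ is a graph map $Q_d\to G$ whose image has $2^d$ vertices, so it is injective. Since $H$ is an induced $Q_d$, an injective graph map of $Q_d$ onto $V(H)$ sends each edge of $Q_d$ to an edge of $H$. Counting edges, it is therefore a graph isomorphism $Q_d\to H$. I will write $\phi_{i,\epsilon}$ for this isomorphism. The claim that any two faces of $Q_{d+1}$ are adjacent or opposite is purely combinatorial: faces $F_i^{\epsilon}, F_j^{\delta}$ are opposite if $i=j$, and meet in a $(d-1)$-face if $i\ne j$.

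The key step is an ``exchange'' analysis of two adjacent supported faces. Suppose $F_i^{\epsilon}$ and $F_j^{\delta}$ with $i\neq j$ are both supported by $H$. The intersection $F_i^{\epsilon}\cap F_j^{\delta}$ is a $(d-1)$-face, and $\phi_{i,\epsilon}$ maps it isomorphically onto a $(d-1)$-face $S$ of $H$. Since $\phi_{j,\delta}$ is also an isomorphism onto $H$ agreeing with $\phi_{i,\epsilon}$ on this common face, it must carry $F_j^{\delta}\cap F_i^{-\epsilon}$ onto $H\setminus S$. Rigidity then pins down each vertex: each vertex of $F_j^{\delta}\cap F_i^{-\epsilon}$ is the unique neighbour in $H\setminus S$ of its $i$-neighbour in $S$. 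Symmetrically, the vertices of $F_i^{\epsilon}\cap F_j^{-\delta}$ map to the same targets. This yields the identity
\[
\sigma(x)=\sigma(\rho_{ij}x)\quad\text{for } x\in F_i^{-\epsilon}\cap F_j^{\delta},
\]
where $\rho_{ij}$ is the reflection exchanging the coordinate pairs $(x_i,x_j)=(-\epsilon,\delta)$ and $(\epsilon,-\delta)$. In other words, on the $2$-dimensional slices in directions $i,j$, the map $\sigma$ folds the square along a diagonal.

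With this in hand the counting goes as follows.

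\emph{Step 1 (at most two directions).} Suppose supported faces occur in three distinct directions $i,j,k$. Then the exchange identities for the pairs $(i,j)$, $(i,k)$ and $(j,k)$ hold simultaneously. I would restrict to a $3$-dimensional slice through a vertex of $F_i^{\epsilon}\cap F_j^{\delta}\cap F_k^{\gamma}$; this slice exists when $d\ge2$, and the case $d=1$ has only two directions anyway. Composing the foldings should show that $\sigma$ identifies two vertices lying in a single supported face, contradicting injectivity. I expect this to be the main obstacle: it requires carefully tracking signs under the three reflections.

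\emph{Step 2 (no exactly three).} Within two directions $i,j$, assume three faces are supported, say $F_i^{\pm\epsilon}$ and $F_j^{\delta}$. Applying the exchange identity to each adjacent pair, $\sigma$ restricted to $F_j^{-\delta}$ is obtained from $\sigma$ restricted to $F_i^{\pm\epsilon}$ by reflections. Hence its image equals $V(H)$, so a fourth face $F_j^{-\delta}$ is supported. This rules out exactly three and excludes five or more, leaving the counts $0,1,2,4$.

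\emph{Step 3 (shape of four).} When four faces are supported, they are $F_i^{\pm}$ and $F_j^{\pm}$. These form the cycle $F_i^{+},F_j^{+},F_i^{-},F_j^{-}$ of length $4$, in which consecutive faces are adjacent and the two opposite pairs are $\{F_i^{+},F_i^{-}\}$ and $\{F_j^{+},F_j^{-}\}$. This is the ``wrap-around'' configuration in the statement.
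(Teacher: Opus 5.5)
Your route is the paper's. Your exchange identity $\sigma(x)=\sigma(\rho_{ij}x)$ is exactly the paper's relation $f_i^{-a}f_j^{b}\sigma=f_{j-1}^{-b}f_i^{a}\sigma$, derived from rigidity in the same way. Your Step 2 supplies the ``easy argument'' that the paper leaves to the reader (three supported faces force the fourth). A small precision there: what the two identities actually give is $\sigma|_{F_j^{-\delta}}=\sigma|_{F_j^{\delta}}\circ(\text{flip of coordinate } i)$, which is why the image is $V(H)$.

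The one real hole is Step 1. You only assert it (``composing the foldings should show\dots'') and flag it as the main obstacle. In fact it needs no composition of reflections, no use of the $(i,k)$ or $(j,k)$ identities, and no sign-tracking.

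Since $\rho_{ij}$ changes only the $i$th and $j$th coordinates, take any $k\ne i,j$ and any sign $\gamma$. The face $F_k^{\gamma}$ contains every vertex $x$ with $(x_i,x_j)=(-\epsilon,\delta)$ and $x_k=\gamma$, and also contains the distinct vertex $\rho_{ij}x$. The single identity for the pair $(i,j)$ gives $\sigma(x)=\sigma(\rho_{ij}x)$. Hence $f_k^{\gamma}\sigma$ is not injective, so by your basic observation it cannot be supported by $H$.

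This is precisely the paper's argument, which phrases it as $f_k^c\sigma$ having two disjoint $(d-1)$-faces with a repeated $(d-2)$-face. It needs only two supported faces in different directions $i,j$, not three. With this line in place of your Step 1, your proof is complete.
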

\begin{proof}
Let $H$ be an arbitrary rigid $d$-cube in $G$. 
Suppose that $\sigma$ is a singular $(d+1)$-cube, and that the number of $d$-faces supported by $H$ is $>1$. Suppose that
$f_i^a\sigma$ and $f_j^b \sigma$ are two such faces, such that $i<j$. In particular, they are not opposite faces.  We claim that the
only possible $d$-faces that can be supported by $H$ are 
\begin{equation}\label{onlypossible}
f_i^-\sigma, f_i^+\sigma, f_j^-\sigma, f_j^+\sigma.
\end{equation}
We will show that any other face, say $f_k^c\sigma$, $k \ne i,j$, must have a repeated label; in fact we will show that it has a repeated
$(d-2)$-face.
First note that $f_i^a\sigma$ and $f_j^b\sigma$ have a common $(d-1)$-face,
\[
f_i^a f_j^b \sigma = f_{j-1}^b f_i^a \sigma
\]
by the face relations. Rigidity of $H$ implies that the opposite faces in $f_j^b \sigma$ and $f_i^a\sigma$ must also agree, 
so we also have
\[
f_i^{-a}f_j^{b} \sigma = f_{j-1}^{-b} f_i^a \sigma.
\]
There are now several cases, depending on the relative order of $i,j,k$. Consider, for example, $i<k<j$ (the other cases are
similar). Applying $f_{k-1}^c$ to both sides of the last equation gives 
\[
f_{k-1}^c f_i^{-a}f_j^{b} \sigma =f_{k-1}^c f_{j-1}^{-b} f_i^a \sigma,
\]
which reduces by the face relations to
\[
f_{j-2}^{b} f_{i}^{-a} f_k^c \sigma = f_{j-2}^{-b} f_i^{a} f_k^c \sigma.
\]
This shows that $f_k^c \sigma$ has two disjoint $(d-1)$-faces ($f_{i}^{-a} f_k^c \sigma$ and $f_i^{a} f_k^c \sigma$) containing identical
subfaces, and thus $f_k^c \sigma$ is not supported by $H$.

We have shown that only faces listed in (\ref{onlypossible}) can be supported by $H$. An easy argument shows that
if three of them are supported by $H$, then so is the fourth. This completes the proof of Lemma \ref{lemma0124}.
\end{proof}

If $H$ is monophobic in $G$,  it follows from Lemma \ref{lemma0124} that a singular $(d+1)$-cube $\sigma$ must 
contain only 0, 2, or 4 faces
supported by $H$.  These faces of $\sigma$ may differ by (uniquely determined) symmetries in $\HH_{d}$. To keep track of 
the symmetries, it will be convenient to 
designate
a base singular cube $\beta_0^H$, chosen arbitrarily for each rigid $d$-cube in $G$.

For notational convenience, we will sometimes identify $Q^{d}$ and $Q^{d+1}$ with the 
cubes $Q^0_d =\{0,1\}^{d}$ and  $Q^0_{d+1} = \{ 0,1\}^{d+1}$, respectively.
The symbols  $\vec{0}^{d} $  and  $\vec{0}^{d+1} $  denote all-zeros vectors, 
and $\vec{e}^{\,d}_i$ and $\vec{e}^{\,d+1}_i$ denote the $i$th coordinate vectors.
 More generally, for a set of indices $S$, 
let $\vec{e}^{\,d}_S$ denote $\sum_{i\in S} \vec{e}^{\,d}_i$, with $\vec{e}^{\,d+1}_S$ defined similarly.

The {\em boundary}
of a singular $d$-cube $\sigma$ is defined as
\[
\partial \sigma = \sum_{i=1}^d (-1)^i (f_i^{-} - f_i^{+}) \sigma = \sum_{i, \epsilon_i} (-1)^{i-1}\epsilon_i f_i^{\epsilon_i}\sigma,
\]
where $\epsilon_i = \pm 1$.

The next lemma provides a key step in the proof of Theorem \ref{maintheorem}, defining a linear invariant
$\chi$ on $d$-faces that is guaranteed to vanish on boundaries.

\begin{lemma}[Main Lemma]\label{chilemma}
If $H$ is a monophobic $d$-cube in $G$ and $\sigma\in C_{d+1}(G)$ is a singular $(d+1)$-cube,  let ${\mathcal F}(\sigma)$ denote the set of faces of $\sigma$. Define a function $\chi: {\mathcal F}(\sigma) \to \{0, \pm 1\}$ 
by 
\[
\chi(f_i^{\epsilon_i}\sigma) = 
\begin{cases}
0 & \textrm{if $f_i^{\epsilon_i}\sigma$ is not supported by $H$}, \\
\mathrm{sign} (P) & \textrm{if $f_i^{\epsilon_i}\sigma= P\beta_0^H, P \in \HH_d$}. \\
\end{cases}
\]
Then $\chi(\partial \sigma) = 0$.
\end{lemma}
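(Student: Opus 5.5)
By monophobicity and Lemma \ref{lemma0124}, the number of faces of $\sigma$ supported by $H$ is $0$, $2$, or $4$. In the $4$-face case the faces come in two opposite pairs, so I will reduce everything to one configuration: a single pair of supported faces, either opposite or adjacent. The target is to show that each such pair contributes $0$ to $\chi(\partial\sigma)=\sum_{i,\epsilon}(-1)^{i-1}\epsilon\,\chi(f_i^{\epsilon}\sigma)$. The case of $0$ supported faces is trivial.

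\textbf{Opposite pair $f_i^{+}\sigma, f_i^{-}\sigma$.} Both faces are supported by the rigid cube $H$. By the uniqueness lemma, $\sigma$ restricted to the union of these two faces is determined by a matching between them. Since the edges in direction $i$ join corresponding vertices of the two faces and $H$ is induced and rigid, each such edge maps either to an edge of $H$ or collapses.

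I will first try to show that $f_i^{+}\sigma = f_i^{-}\sigma$ as singular cubes, or more generally that they differ by an element $R\in\HH_d$ with $\sgn(R)=1$. Since the two faces enter $\partial\sigma$ with opposite signs $\pm(-1)^{i-1}$, equal $\chi$-values cancel. The concrete mechanism: the map $v\mapsto$ (vertex of the other face joined to $v$) defines a graph automorphism of $H$ in which every vertex is sent to itself or a neighbor. I claim such an automorphism of $Q_d$ is either the identity or a single axis reflection. But a reflection has sign $-1$, which would give the wrong answer, so something must rule it out or the signs must be rechecked. If a reflection does occur, the two face terms have $\chi$-values $s$ and $-s$ with opposite coefficients, giving $\pm 2s\neq0$. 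So I must exclude the reflection. The $(d+1)$-cube would then be a "prism" over $H$ flipped in one direction, and I expect this configuration to force some other face to be supported by $H$ as well, contradicting Lemma \ref{lemma0124}'s count of only two supported faces. This is the point I expect to be most delicate.

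\textbf{Adjacent pair $f_i^{a}\sigma, f_j^{b}\sigma$ with $i<j$.} The face relation gives the shared $(d-1)$-face $f_i^{a}f_j^{b}\sigma=f_{j-1}^{b}f_i^{a}\sigma$. As in the proof of Lemma \ref{lemma0124}, rigidity also gives $f_i^{-a}f_j^{b}\sigma=f_{j-1}^{-b}f_i^{a}\sigma$. Hence the two singular $d$-cubes agree after matching coordinates: $f_j^{b}\sigma=P f_i^{a}\sigma$, where $P$ is the cyclic permutation moving axis $i$ into position $j-1$ (an $i\leftrightarrow j$ exchange through the intermediate axes), composed with a reflection when $a\neq b$. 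I then compute $\sgn(P)$ via $\flip+\inv$: the cycle contributes $(-1)^{j-1-i}$ inversions, and the reflection contributes $-1$ exactly when $a\ne b$, i.e.\ when $ab=-1$. The coefficients in $\partial\sigma$ are $(-1)^{i-1}a$ and $(-1)^{j-1}b$. The cancellation condition
\[
(-1)^{i-1}a\,\chi + (-1)^{j-1}b\,\sgn(P)\chi = 0
\]
reduces to $\sgn(P)=-(-1)^{i-j}ab$. This is a parity bookkeeping check on the exact form of $P$ that I will verify carefully, since the off-by-one in $j-1$ is where a sign slip is likely.

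\textbf{Four faces.} They form two opposite pairs, $\{i^{\pm}\}$ and $\{j^{\pm}\}$. Either the opposite-pair analysis applies to each pair, or I group the four faces instead into two adjacent pairs $(i^{+},j^{+})$ and $(i^{-},j^{-})$ and apply the adjacent computation twice. The latter is cleaner and avoids the reflection issue entirely, so I will use it.
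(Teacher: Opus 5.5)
Your adjacent-pair computation is correct and is essentially the paper's Lemma~\ref{adjfaces}. The relating symmetry is a cyclic shift of positions $i,\dots,j-1$ (sign $(-1)^{j-i-1}$), composed with a flip exactly when $ab=-1$. Its sign is therefore $(-1)^{j-i-1}ab$, which is what cancellation requires. Pairing the four-face case as $(f_i^{+},f_j^{+})$ and $(f_i^{-},f_j^{-})$ is also how the paper handles it.

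The gap is in the case of exactly two \emph{opposite} supported faces. You claim that an automorphism of $Q_d$ moving each vertex by Hamming distance at most $1$ is either the identity or a single axis reflection. This is false: it misses the quarter-turns $P_iP_{ij}$, i.e.\ $90^\circ$ rotations in one coordinate $2$-plane. These are exactly what occur for nondegenerate $\sigma$.

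Here is a concrete instance. Let $H$ be the $4$-cycle with $1,2,3,4$ at $(0,0),(1,0),(0,1),(1,1)$. Let $\sigma$ be the singular $3$-cube whose bottom face is $1,2,3,4$ and whose top face is $2,4,1,3$ at the same positions.
\begin{itemize}
\item Every vertical edge goes to an edge of $H$.
\item The four side faces have images $\{1,2,3\}$, $\{2,3,4\}$, $\{1,2,4\}$, $\{1,3,4\}$, so exactly the two opposite faces are supported.
\item Those two faces differ by a rotation, which is neither the identity nor a reflection.
\end{itemize}
Under your classification, excluding the reflection would force every such $\sigma$ to be degenerate, which should have been a warning sign.

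The missing ingredient is the correct trichotomy, the paper's Proposition~\ref{pert}: a bijective graph map $T$ of $Q^0_d$ with $d(v,Tv)\le 1$ is the identity, a quarter-turn $P_iP_{ij}$, or an axis reflection $P_i$. The proof normalizes $T(\vec{0})=\vec{e}_1$. Then $T(\vec{e}_1)$ is either $\vec{0}$ or some $\vec{e}_{1j}$, and one propagates through the cube.

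A quarter-turn is one transposition composed with one flip, so its sign is $+1$. The two opposite faces then have equal $\chi$-values with opposite boundary coefficients, so they cancel.

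Your planned exclusion of the reflection is correct and easy. Suppose $f_k^{+}\sigma$ is $f_k^{-}\sigma$ precomposed with the reflection in axis $i$ of the face. Then each face of $\sigma$ perpendicular to that axis receives one half of $V(H)$ from $f_k^{-}\sigma$ and the complementary half from $f_k^{+}\sigma$. So it is also supported by $H$, giving four supported faces rather than two.

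With the quarter-turn case added and its sign checked, your outline becomes the paper's proof.
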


\noindent
The following example shows a typical computation of $\chi $ as in Lemma \ref{chilemma}. Here  $\beta_0^H = f_3^- \sigma$.

\begin{example}
Let
$\beta_0^H=$
\raisebox{-1.7em}{
\begin{tikzpicture}[scale=1]
\tikzset{VertexStyle/.style = {shape= circle,
ball color = red,inner sep= 0pt,outer sep= 0pt,minimum size=6pt}} 
   \Vertex[LabelOut,Lpos=180,x=0 ,y=0,L=${\color{red}1}$]{1}
     \Vertex[LabelOut,Lpos=0,x=1 ,y=0,L=${\color{red}2}$]{2}
       \Vertex[LabelOut,Lpos=180,x=0 ,y=1,L=${\color{red}3}$]{3}
         \Vertex[LabelOut,Lpos=0,x=1 ,y=1,L=${\color{red}4}$]{4}
    \Edge[style={thick},color=red](1)(2)
      \Edge[style={ thick},color=red](3)(4)
        \Edge[style={thick},color=red](1)(3)
          \Edge[style={thick},color=red](2)(4)
    \end{tikzpicture}
},
and let $\sigma = $
    \raisebox{-3.3em}{
\begin{tikzpicture}[scale=1.5]
\tikzset{VertexStyle/.style = {shape= circle,
ball color = gray,text= black,inner sep= 0pt,outer sep= 0pt,minimum size= 5pt}} 
   \Vertex[LabelOut,Lpos=180,x=0 ,y=0,L=${\color{red}1}$]{1}
     \Vertex[LabelOut,Lpos=0,x=1 ,y=0,L=${\color{red}2}$]{2}
       \Vertex[LabelOut,Lpos=180,x=.5 ,y=.5,L=${\color{red}3}$]{3}
         \Vertex[LabelOut,Lpos=0,x=1.5 ,y=.5,L=${\color{red}4}$]{4}
            \Vertex[LabelOut,Lpos=180,x=0 ,y=1,L=${\color{red}2}$]{5}
               \Vertex[LabelOut,Lpos=0,x=1 ,y=1,L=${\color{red}1}$]{6}
                  \Vertex[LabelOut,Lpos=180,x=.5 ,y=1.5,L=${\color{red}4}$]{7}
                     \Vertex[LabelOut,Lpos=0,x=1.5 ,y=1.5,L=${\color{red}3}$]{8}
    \Edge[style={thick},color=blue](1)(2)
      \Edge[style={thick},color=blue](3)(4)
        \Edge[style={thick},color=blue](1)(3)
          \Edge[style={thick},color=blue](2)(4)
           \Edge[style={thick},color=blue](5)(6)
      \Edge[style={thick},color=blue](7)(8)
        \Edge[style={thick},color=blue](5)(7)
          \Edge[style={thick},color=blue](6)(8)
          \Edge[style={thick},color=blue](1)(5)
      \Edge[style={thick},color=blue](2)(6)
        \Edge[style={thick},color=blue](3)(7)
          \Edge[style={ thick},color=blue](4)(8)
    \end{tikzpicture}
    }.
   There are $4$ faces supported by $H$, wrapping around the cube:
    
    \bigskip
    
 \qquad   \begin{tikzpicture}[scale=1]
\tikzset{VertexStyle/.style = {shape= circle,
ball color = gray,inner sep= 0pt,outer sep= 0pt,minimum size= 5pt}} 
   \Vertex[LabelOut,Lpos=180,x=0 ,y=0,L=${\color{red}1}$]{1}
     \Vertex[LabelOut,Lpos=0,x=1 ,y=0,L=${\color{red}2}$]{2}
       \Vertex[LabelOut,Lpos=180,x=0 ,y=1,L=${\color{red}3}$]{3}
         \Vertex[LabelOut,Lpos=0,x=1 ,y=1,L=${\color{red}4}$]{4}
    \Edge[style={thick},color=red](1)(2)
      \Edge[style={thick},color=red](3)(4)
        \Edge[style={thick},color=red](1)(3)
          \Edge[style={thick},color=red](2)(4)
            \node  [color=blue] at (.5,-.5) {$\chi=+1$};
    \end{tikzpicture}
    \qquad\quad
    \begin{tikzpicture}[scale=1]
\tikzset{VertexStyle/.style = {shape= circle,
ball color = gray,inner sep= 0pt,outer sep= 0pt,minimum size= 5pt}} 
   \Vertex[LabelOut,Lpos=180,x=0 ,y=0,L=${\color{red}2}$]{1}
     \Vertex[LabelOut,Lpos=0,x=1 ,y=0,L=${\color{red}4}$]{2}
       \Vertex[LabelOut,Lpos=180,x=0 ,y=1,L=${\color{red}1}$]{3}
         \Vertex[LabelOut,Lpos=0,x=1 ,y=1,L=${\color{red}3}$]{4}
    \Edge[style={thick},color=red](1)(2)
      \Edge[style={thick},color=red](3)(4)
        \Edge[style={thick},color=red](1)(3)
          \Edge[style={thick},color=red](2)(4)
            \node  [color=blue] at (.5,-.5) {$\chi=+1$};
    \end{tikzpicture}
    \qquad\quad
     \begin{tikzpicture}[scale=1]
\tikzset{VertexStyle/.style = {shape= circle,
ball color = gray,inner sep= 0pt,outer sep= 0pt,minimum size= 5pt}} 
   \Vertex[LabelOut,Lpos=180,x=0 ,y=0,L=${\color{red}2}$]{1}
     \Vertex[LabelOut,Lpos=0,x=1 ,y=0,L=${\color{red}1}$]{2}
       \Vertex[LabelOut,Lpos=180,x=0 ,y=1,L=${\color{red}4}$]{3}
         \Vertex[LabelOut,Lpos=0,x=1 ,y=1,L=${\color{red}3}$]{4}
    \Edge[style={thick},color=red](1)(2)
      \Edge[style={thick},color=red](3)(4)
        \Edge[style={thick},color=red](1)(3)
          \Edge[style={thick},color=red](2)(4)
            \node  [color=blue] at (.5,-.5) {$\chi=-1$};
    \end{tikzpicture}
    \qquad\quad
     \begin{tikzpicture}[scale=1]
\tikzset{VertexStyle/.style = {shape= circle,
ball color = gray,inner sep= 0pt,outer sep= 0pt,minimum size= 5pt}} 
   \Vertex[LabelOut,Lpos=180,x=0 ,y=0,L=${\color{red}1}$]{1}
     \Vertex[LabelOut,Lpos=0,x=1 ,y=0,L=${\color{red}3}$]{2}
       \Vertex[LabelOut,Lpos=180,x=0 ,y=1,L=${\color{red}2}$]{3}
         \Vertex[LabelOut,Lpos=0,x=1 ,y=1,L=${\color{red}4}$]{4}
    \Edge[style={thick},color=red](1)(2)
      \Edge[style={thick},color=red](3)(4)
        \Edge[style={thick},color=red](1)(3)
          \Edge[style={thick},color=red](2)(4)
            \node  [color=blue] at (.5,-.5) {$\chi=-1$};
    \end{tikzpicture}
    \smallskip
    \newline
   It follows from these data that $\chi(\partial \sigma) = 0$, as asserted by Lemma \ref{chilemma}.

\end{example}

The proof of Lemma \ref{chilemma} will involve  understanding carefully how the signs of terms in $\partial$ interact with signs of symmetries $P\in \HH_d$.   Some additional notation is needed for this analysis.

\begin{definition}[Signs of terms in $\partial$] Let $F_i^{\epsilon_i}, F_j^{\epsilon_j}$ be faces of $Q_d$, and let $\sigma: Q_d \to G$ be a singular $d$-cube. Define
\begin{align*}
 \sgn (F_i^{\epsilon_i}) &= \textrm{sign associated with $f_i^{\epsilon_i}\sigma$  in  $\partial \sigma$} \\
\sgn (F_i^{\epsilon_i},F_j^{\epsilon_j})  &= \textrm{sign change from $ \sgn (F_i^{\epsilon_i})$  to $\sgn (F_j^{\epsilon_j})$ } 
\end{align*}
Specifically:
\[
\sgn (F_i^{\epsilon_i}) = (-1)^{i-1}\epsilon_i\;\; \mbox{and}\;\;
\sgn (F_i^{\epsilon_i},F_j^{\epsilon_j}) = (-1)^{i+j} \epsilon_i \epsilon_j.
\]
\end{definition} 

\begin{definition}[Sign differences of adjacent faces]
Suppose that  $F_i^{\epsilon_i}$ and $F_j^{\epsilon_j}$ are adjacent faces of $Q_d$, and let 
$P:  Q_d \to Q_d$ be
the unique reflection of $Q_d$ fixing their intersection. Let $\tilde{P}: Q_{d-1} \to Q_{d-1}$ be the map obtained
by restricting $P$ to  $F_i^{\epsilon_i}$,  with ordering and orientations of axes inherited from $Q_d$.
Then define
\[
\chi(F_i^{\epsilon_i}, F_j^{\epsilon_j}) = \sgn(\tilde{P}).
\]
\end{definition}
\noindent
Note that while $\sgn(P)$ is always equal to $-1$, $\tilde{P}$ can have sign $\pm 1$.  Here are some illustrations of 
the computations for $d=3$.
\begin{align*}
F_3^- \longleftrightarrow F_1^+ \qquad & \chi(F_3^- , F_1^+) = +1 \\
F_3^- \longleftrightarrow F_2^+ \qquad & \chi(F_3^- , F_2^+) = -1 \\
F_3^- \longleftrightarrow F_1^- \qquad & \chi(F_3^- , F_1^-) = -1 \\
F_3^- \longleftrightarrow F_2^- \qquad & \chi(F_3^- , F_2^-) = +1 
\end{align*}
For example, $P = P_1 P_3 P_{13}: Q^0_3 \to Q^0_3$ reflects $F_1^+$ to $  F_3^-$  and induces the map $\tilde{P}= P_1 P_{12}: Q^0_2 \to Q^0_2$ (illustrated below), which is a rotation. Hence  $\chi(F_3^- , F_1^+) = \chi( F_1^+ ,F_3^- ) = +1$.

\medskip
\qquad\qquad\quad
\raisebox{-2em}{
\begin{tikzpicture}[scale=1.5]
\tikzset{VertexStyle/.style = {shape= circle,
ball color = gray,text= black,inner sep= 0pt,outer sep= 0pt,minimum size= 5pt}} 
   \Vertex[LabelOut,Lpos=180,x=0 ,y=0,L=${\color{red}c}$]{1}
     \Vertex[LabelOut,Lpos=0,x=1 ,y=0,L=${\color{red}a}$]{2}
       \Vertex[LabelOut,Lpos=180,x=.5 ,y=.5,L=${\color{red}d}$]{3}
         \Vertex[LabelOut,Lpos=0,x=1.5 ,y=.5,L=${\color{red}b}$]{4}
            \Vertex[LabelOut,Lpos=180,x=0 ,y=1,L=${\color{red}}$]{5}
               \Vertex[LabelOut,Lpos=0,x=1 ,y=1,L=${\color{red}c}$]{6}
                  \Vertex[LabelOut,Lpos=180,x=.5 ,y=1.5,L=${\color{red}}$]{7}
                     \Vertex[LabelOut,Lpos=0,x=1.5 ,y=1.5,L=${\color{red}d}$]{8}
    \Edge[style={thick},color=blue](1)(2)
      \Edge[style={thick},color=blue](3)(4)
        \Edge[style={thick},color=blue](1)(3)
          \Edge[style={thick},color=blue](2)(4)
           \Edge[style={thick},color=blue](5)(6)
      \Edge[style={thick},color=blue](7)(8)
        \Edge[style={thick},color=blue](5)(7)
          \Edge[style={thick},color=blue](6)(8)
          \Edge[style={thick},color=blue](1)(5)
      \Edge[style={thick},color=blue](2)(6)
        \Edge[style={thick},color=blue](3)(7)
          \Edge[style={ thick},color=blue](4)(8)
    \end{tikzpicture}
    }
   \qquad 
\begin{tikzpicture}[scale=1]
\tikzset{VertexStyle/.style = {shape= circle,
ball color = gray,inner sep= 0pt,outer sep= 0pt,minimum size= 5pt}} 
   \Vertex[LabelOut,Lpos=180,x=0 ,y=0,L=${\color{red}a}$]{1}
     \Vertex[LabelOut,Lpos=0,x=1 ,y=0,L=${\color{red}b}$]{2}
       \Vertex[LabelOut,Lpos=180,x=0 ,y=1,L=${\color{red}c}$]{3}
         \Vertex[LabelOut,Lpos=0,x=1 ,y=1,L=${\color{red}d}$]{4}
    \Edge[style={thick},color=red](1)(2)
      \Edge[style={thick},color=red](3)(4)
        \Edge[style={thick},color=red](1)(3)
          \Edge[style={thick},color=red](2)(4)
           \end{tikzpicture}
          \raisebox{1.5em}{$\overset{\tilde{P}} \longrightarrow$ }
    \begin{tikzpicture}[scale=1]
\tikzset{VertexStyle/.style = {shape= circle,
ball color = gray,inner sep= 0pt,outer sep= 0pt,minimum size= 5pt}} 
   \Vertex[LabelOut,Lpos=180,x=0 ,y=0,L=${\color{red}c}$]{1}
     \Vertex[LabelOut,Lpos=0,x=1 ,y=0,L=${\color{red}a}$]{2}
       \Vertex[LabelOut,Lpos=180,x=0 ,y=1,L=${\color{red}d}$]{3}
         \Vertex[LabelOut,Lpos=0,x=1 ,y=1,L=${\color{red}b}$]{4}
    \Edge[style={thick},color=red](1)(2)
      \Edge[style={thick},color=red](3)(4)
        \Edge[style={thick},color=red](1)(3)
          \Edge[style={thick},color=red](2)(4)
    \end{tikzpicture}
\bigskip

Returning to the proof of Lemma \ref{chilemma}, we suppose that a monophobic cube $H\subseteq G$ is supported by 
two faces in $\sigma$, which may be either adjacent or opposite. The proof of Lemma \ref{chilemma} consists of two
lemmas, dealing separately with these two cases.

\begin{lemma}\label{adjfaces}{\rm (Adjacent Faces)}
Let $F_i^{\epsilon_i}$ and $F_j^{\epsilon_j} $ be adjacent faces of $Q_d$ (i.e. $i \ne j$). Then
\[
\sgn(F_i^{\epsilon_i}, F_j^{\epsilon_j}) \chi(F_i^{\epsilon_i}, F_j^{\epsilon_j})=-1.
\]
Hence if two adjacent faces of $\sigma$ are supported by $H$, their contributions cancel out in the
computation of $\chi(\partial\sigma)$.
\end{lemma}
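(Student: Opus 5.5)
We will compute both factors explicitly and check that their product is $-1$. By definition $\sgn(F_i^{\epsilon_i},F_j^{\epsilon_j}) = (-1)^{i+j}\epsilon_i\epsilon_j$, so the work is to show that
\[
\chi(F_i^{\epsilon_i},F_j^{\epsilon_j}) = -(-1)^{i+j}\epsilon_i\epsilon_j .
\]
The definition of $\chi(F_i^{\epsilon_i},F_j^{\epsilon_j})$ is symmetric in the two faces, since the reflection $P$ is an involution and the restriction to either face gives inverse maps of equal sign. So we may assume $i<j$.

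First we describe $P$ concretely. With coordinates $\pm1$, the intersection $F_i^{\epsilon_i}\cap F_j^{\epsilon_j}$ lies in the hyperplane $x_i\epsilon_i = x_j\epsilon_j$. The reflection fixing this hyperplane is
\[
P(x) = x \text{ with } x_i \mapsto \epsilon_i\epsilon_j x_j,\; x_j\mapsto \epsilon_i\epsilon_j x_i,
\]
which is a transposition of axes $i,j$, composed with negation of both axes when $\epsilon_i\ne\epsilon_j$. This $P$ maps $F_j^{\epsilon_j}$ onto $F_i^{\epsilon_i}$. We take $\tilde P$ to be the map of $Q_{d-1}$ obtained by identifying $F_j^{\epsilon_j}$ and $F_i^{\epsilon_i}$ with $Q_{d-1}$ via deletion of the $j$th and $i$th coordinate respectively, in the inherited order.

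Next we compute $\tilde P$ in coordinates. Take $y\in Q_{d-1}$ and embed it into $F_j^{\epsilon_j}$ by inserting $\epsilon_j$ in position $j$. Applying $P$ puts $\epsilon_i\epsilon_j y_i$ into position $j$ and $\epsilon_i$ into position $i$, leaving the other coordinates unchanged. Deleting position $i$ then gives the following:
\begin{itemize}
\item the coordinates of $y$ at positions $i+1,\dots,j-1$ shift down by one, to positions $i,\dots,j-2$;
\item the old coordinate $y_i$ moves to position $j-1$, multiplied by $\epsilon_i\epsilon_j$;
\item all other coordinates are fixed.
\end{itemize}
So $\tilde P$ is a cyclic permutation of $j-i$ consecutive axes, composed with the sign $\epsilon_i\epsilon_j$ on a single axis. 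A cycle of length $j-i$ has $j-i-1$ inversions, and the axis flip contributes a factor $\epsilon_i\epsilon_j$. Using $\sgn(P)=(-1)^{\flip+\inv}$, this gives
\[
\sgn(\tilde P) = (-1)^{j-i-1}\epsilon_i\epsilon_j .
\]
The product with $(-1)^{i+j}\epsilon_i\epsilon_j$ is then $(-1)^{2j-1}=-1$, as claimed. As a check, this agrees with the four tabulated $d=3$ cases; for example, $i=1$, $j=3$, $\epsilon_1=+$, $\epsilon_3=-$ gives $(-1)^{1}(-1)=+1$.

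For the consequence about $\chi(\partial\sigma)$, suppose $f_i^{\epsilon_i}\sigma = Q\beta_0^H$ and $f_j^{\epsilon_j}\sigma$ are both supported by $H$. They share the common $(d-1)$-face, and by rigidity of $H$ (the Lemma that $\sigma$ is determined by its values on any face) we get $f_i^{\epsilon_i}\sigma = \tilde P\, f_j^{\epsilon_j}\sigma$ up to the $\HH_{d-1}$-action convention. Their $\chi$-values therefore differ by the factor $\sgn(\tilde P)$. Their coefficients in $\partial\sigma$ differ by the factor $\sgn(F_i^{\epsilon_i},F_j^{\epsilon_j})$. By the identity just proved, the two contributions to $\chi(\partial\sigma)$ cancel.

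The main obstacle is bookkeeping rather than anything conceptual. One has to be careful about the direction of $\tilde P$ (from the $j$-face to the $i$-face or the reverse) and about the convention $P\sigma(v)=\sigma(P^{-1}v)$. Both choices only invert $\tilde P$, which does not change its sign, so the calculation is robust. We will verify the cycle direction, and hence the inversion count $j-i-1$, against the displayed example with $\tilde P = P_1P_{12}$.
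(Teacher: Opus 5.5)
Your proof is correct and follows essentially the same route as the paper. Both identify the reflection $P$ as the transposition of axes $i,j$, with both axes negated when $\epsilon_i\ne\epsilon_j$. Both read off $\tilde P$ as a consecutive $(j-i)$-cycle plus at most one flip, giving $\chi(F_i^{\epsilon_i},F_j^{\epsilon_j})=(-1)^{j-i-1}\epsilon_i\epsilon_j$, and then multiply by $(-1)^{i+j}\epsilon_i\epsilon_j$. Your explicit coordinate computation of $\tilde P$ and your rigidity argument for the cancellation consequence spell out steps the paper treats as ``clear'' or leaves implicit.
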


\begin{proof}
It is immediate that 
$F_i^{\epsilon_i}$ and $F_j^{\epsilon_j} $ are interchanged either by the reflection
$P=P_{ij}$ (if $\epsilon_i =\epsilon_j$) or by the reflection $P=P_iP_j P_{ij}$  (if $\epsilon_i \ne \epsilon_j$).
We know that 
\begin{equation}\label{sgnchi}
\sgn(F_i^{\epsilon_i},F_j^{\epsilon_j})= (-1)^{i+j}\epsilon_i \epsilon_j.
\end{equation}
 To compute
$\chi(F_i^{\epsilon_i},F_j^{\epsilon_j})$ we must identify $\tilde{P}$ and compute its sign. 

Without loss of generality, assume that $i<j$.
We may construct $\tilde{P}$ from $P$ as follows: 
first write $P$ as a signed permutation, then drop $i$ (or $\bar{i}$), as appropriate, and finally, reduce 
all occurrences of $j$ (or $\bar{j}$) by one if $j>i$.

For example, if $n=3$ and $P: F_1^+\longleftrightarrow F_3^-$ (as illustrated above) then $P = P_1P_3 P_{13}$, and
we get 
\[
P = \bar{3}2\bar{1}  \longrightarrow  \bar{3} 2 \longrightarrow  \bar{2} 1 = \tilde{P}
\]
It is clear from this construction that $(-1)^\inv(\tilde{P})= (-1)^{j-i-1}$ and 
$\flip(\tilde{P}) = 0$ or $1$ according to whether $\epsilon_i = \epsilon_j$ or $ \epsilon_i = -\epsilon_j$. Hence
\[
\chi(F_i^{\epsilon_i},F_j^{\epsilon_j} )= \sgn(\tilde{P}) = 
\begin{cases}
(-1)^{j-i-1} & \textrm{if } \epsilon_i = \epsilon_j \\
(-1)^{j-i} & \textrm{if } \epsilon_i = - \epsilon_j \\
\end{cases}
\]
which can be rewritten as 
\begin{equation}\label{sgnchi2}
\chi(F_i^{\epsilon_i},F_j^{\epsilon_j} )= (-1)^{j-i-1} \epsilon_i \epsilon_j. 
\end{equation}
Combining (\ref{sgnchi}) with (\ref{sgnchi2}) proves Lemma \ref{adjfaces}
\end{proof}

We will need the following purely graph-theoretic result, which may be of independent interest.

\begin{proposition}\label{pert}
Let $T: Q^0_d\to Q^0_d$ be a bijective graph map such that 
$d(v,Tv) \le 1$ for all vertices $v\in Q^0_d$, where $d$ denotes Hamming distance.  Then $T$ is either the identity,  a quarter turn $P_i P_{ij}$ 
involving two axis directions, or an axis reflection $P_i$ across a hyperplane separating a face from its opposite. 
\end{proposition}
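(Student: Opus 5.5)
The plan is to first upgrade $T$ to a graph automorphism of $Q^0_d$, then use the known description of $\mathrm{Aut}(Q_d)$ as the hyperoctahedral group $\HH_d$, and finally pin down which elements of $\HH_d$ satisfy the displacement condition $d(v,Tv)\le 1$.

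\emph{Step 1: $T$ is an automorphism.} Since $T$ is injective, adjacent vertices $v_1\sim v_2$ cannot be identified, so $T(v_1)\sim T(v_2)$. Thus $T$ induces an injective map on edge sets. Both edge sets are the same finite set, so this map is a bijection. Hence $T^{-1}$ also preserves adjacency, and $T\in\mathrm{Aut}(Q_d)$.

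\emph{Step 2: normal form.} It is standard that $\mathrm{Aut}(Q_d)=\HH_d$, where we use $0/1$ coordinates with addition mod $2$. So we may write
\[
T(v)=\pi(v)+c \pmod 2,
\]
where $\pi$ permutes coordinates and $c\in\{0,1\}^d$ records the axis reflections. The condition $d(v,Tv)\le 1$ says that $v+\pi(v)$ lies within Hamming distance $1$ of $c$ for every $v$.

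\emph{Step 3: constrain $c$ and $\pi$ by testing specific vertices.}
\begin{itemize}
\item Taking $v=\vec{0}^{\,d}$ gives $\pi(v)=\vec 0$, so $|c|\le 1$.
\item If $\pi$ is the identity, then $v+\pi(v)=0$ and the condition reduces to $|c|\le1$. So $T$ is the identity or a single axis reflection $P_i$.
\item If $\pi$ moves some index $i$, say $\pi$ sends coordinate $i$ to coordinate $j\neq i$, test $v=\vec e_i$. Then $v+\pi(v)=\vec e_i+\vec e_j$ has weight $2$. Being within distance $1$ of $c$ forces $c=\vec e_m$ with $m\in\{i,j\}$.
\end{itemize}
This constraint must hold simultaneously for every moved index $i$, with the same $m$. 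If $\pi$ had a cycle $(a\,b\,c\,\dots)$ of length $\ge 3$, the pairs $\{a,b\}$ and $\{b,c\}$ and the next consecutive pair would have no common element. For a $3$-cycle the three pairs have empty intersection; a longer cycle also contains two disjoint consecutive pairs. The same contradiction arises if $\pi$ had two disjoint nontrivial cycles. Hence $\pi$ is a single transposition $(i\,j)$ and $c\in\{\vec e_i,\vec e_j\}$, which is exactly a quarter turn $P_iP_{ij}$ or $P_jP_{ij}$ in the two axis directions $i,j$.

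\emph{Step 4: converse.} Conversely, one checks directly that the identity, every $P_i$, and every such quarter turn move each vertex by at most one. For the quarter turn, $v+\pi(v)+c$ is supported in $\{i,j\}$ and has weight at most $1$ there.

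The main obstacle is Step 1 together with the appeal to $\mathrm{Aut}(Q_d)=\HH_d$. Once these are in hand, the rest is the elementary test-vector argument above. If one wants to avoid quoting the automorphism theorem, the fallback is to argue directly. The image of the star at $\vec 0$ is the star at $T(\vec 0)$. Propagating through the squares of $Q_d$ then shows that $T$ acts affinely on coordinates, and this is the step most in need of care.
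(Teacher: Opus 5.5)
Your proposal is correct, but it takes a different route from the paper. The paper never upgrades $T$ to an automorphism or invokes $\mathrm{Aut}(Q_d)=\HH_d$. Instead it normalizes by symmetry to $T(\vec{0}^{d})=\vec{e}^{\,d}_1$. It then notes that $T(\vec{e}^{\,d}_1)$ must be a neighbor of $\vec{e}^{\,d}_1$, i.e.\ either $\vec{0}^{d}$ or some $\vec{e}^{\,d}_{1j}$. In each case it propagates outward using only the displacement condition, bijectivity, and the fact that two cube vertices at distance $2$ have exactly two common neighbors. In the first case $T$ is forced to swap $\vec{e}^{\,d}_S$ and $\vec{e}^{\,d}_S+\vec{e}^{\,d}_1$ for every $S$, so $T=P_1$. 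In the second case a $4$-cycle on $\vec{0}^{d},\vec{e}^{\,d}_1,\vec{e}^{\,d}_{1j},\vec{e}^{\,d}_j$ is forced and then translated by every $\vec v$ supported off $\{1,j\}$, so $T=P_1P_{1j}$.

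In effect the paper carries out your ``fallback,'' but with the displacement hypothesis available at every step. That is what lets each value be pinned down without first proving that $T$ is affine. Your version replaces that induction, which the paper only sketches, by the finite edge-count argument plus a standard classification theorem. After that, the test vectors $\vec{0}^{d}$ and $\vec{e}^{\,d}_i$, together with the observation that every pair $\{i,\pi(i)\}$ must contain the single index $m$ with $c=\vec{e}^{\,d}_m$, finish the job cleanly. Your approach also treats $T(\vec{0}^{d})=\vec{0}^{d}$ uniformly instead of by relabeling, and gives the converse for free. The paper's argument, by contrast, is self-contained and does not depend on knowing the automorphism group of $Q_d$.
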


\begin{proof} Suppose that $T$ is not the identity. Without loss of generality, we may assume that $T(\vec{0}) = \vec{e}_1$. 

\smallskip
\noindent
{\em Case 1:}  Suppose $T( \vec{e}^{\,d}_1)= \vec{0}^{d}$, i.e., $T$ switches $\vec{0}^{d}$ and $\vec{e}_1^{\,d}$. If $j \ne 1$, then 
$T(\vec{e}_j^{\,d})$ is a common neighbor of both $\vec{e}_j^{\,d}$ and $\vec{e}_1^{\,d}$, since $T$ is a graph map. This forces 
 $T(\vec{e}_j^{\,d})= \vec{0}$ or
$ \vec{e}_{1j}^{\,d}$.
Since $T$ is bijective,  it follows that $T(\vec{e}_j^{\,d})= \vec{e}_{1j}^{\,d}$, and  similar
reasoning shows that  $T( \vec{e}_{1j}^{\,d})=T(\vec{e}_j^{\,d})$. The argument extends easily by induction, showing that $T$ switches
$ \vec{e}_S^{\,d}$ and $ \vec{e}_S^{\,d}+\vec{e}_1^{\,d}$ for all $S \subseteq \{2,\dots,d\}$. Thus $T = P_1$, an axis reflection.

\smallskip
\noindent
{\em Case 2:} Suppose $T( \vec{e}^{\,d}_1)= \vec{e}^{\,d}_1+\vec{e}^{\,d}_j= \vec{e}^{\,d}_{1j}$ for some $j \ne 1$. Since $\vec{e}^{\,d}_j$ is adjacent to $\vec{0}^d$, 
$T(\vec{e}^{\,d}_j)$  is a common neighbor of  $T(\vec{0}^d) = \vec{e}^{\,d}_1$ and   $\vec{e}^{\,d}_j$. Bijectivity of $T$ implies 
$T(\vec{e}^{\,d}_j)\ne \vec{e}^{\,d}_{1j}$,  and hence  $T(\vec{e}^{\,d}_j) = \vec{0}^d$. A similar argument shows 
that $T(\vec{e}^{\,d}_{1j})= \vec{e}^{\,d}_j$,  thus establishing the $4$-cycle shown on the left below, with $T$ represented 
by red arrows:

\begin{center}
\raisebox{.4cm}{
  \begin{tikzpicture}[scale=1]
\tikzset{VertexStyle/.style = {shape= circle,
ball color = gray,text= black,inner sep= 0pt,outer sep= 0pt,minimum size= 6pt}} 
   \Vertex[LabelOut,Lpos=180,Ldist = .01cm,x=0 ,y=0,L=$\color{blue}\vec{0}^d$]{1}
     \Vertex[LabelOut,Lpos=0,Ldist =.01,x=2 ,y=0,L=$\color{blue}\vec{e}^{\,d}_1$]{2}
       \Vertex[LabelOut,Lpos=0,Ldist = .01cm,x=2 ,y=2,L=$\color{blue}\vec{e}^{\,d}_{1j}$]{3}
         \Vertex[LabelOut,Lpos=180,Ldist = .01cm,x=0 ,y=2,L=$\color{blue}\vec{e}^{\,d}_j$]{4}
                      \Edges[style={-{stealth[length=4mm]},thick},color=red](1,2,3,4,1) 
    \end{tikzpicture}
    }
    \qquad
 \begin{tikzpicture}[scale=1]
\tikzset{VertexStyle/.style = {shape= circle,
ball color = gray,text= black,inner sep= 0pt,outer sep= 0pt,minimum size= 6pt}} 
   \Vertex[LabelOut,Lpos=180,Ldist = .01cm,x=0 ,y=0,L=$\color{blue}\vec{v}$]{1}
     \Vertex[LabelOut,Lpos=0,Ldist =.01,x=3-.5 ,y=0,L=$\color{blue}\vec{v}+\vec{e}^{\,d}_1$]{2}
       \Vertex[LabelOut,Lpos=0,Ldist = .01cm,x=4-.5 ,y=1,L=$\color{blue}\vec{v}+\vec{e}^{\,d}_{1k}$]{3}
         \Vertex[LabelOut,Lpos=180,Ldist = .01cm,x=1 ,y=1,L=$\color{blue}\vec{v}+\vec{e}^{\,d}_k$]{4}
  
   \Vertex[LabelOut,Lpos=180,Ldist = .01cm,x=0 ,y=2,L=$\color{blue}\vec{v}+\vec{e}^{\,d}_j$]{5}
     \Vertex[LabelOut,Lpos=0,Ldist =.2cm,x=3-.5 ,y=2,L=$\color{blue}\vec{v}+\vec{e}^{\,d}_{1j}$]{6}
       \Vertex[LabelOut,Lpos=0,Ldist = .01cm,x=4-.5 ,y=3,L=$\color{blue}\vec{v}+\vec{e}^{\,d}_{1jk}$]{7}
         \Vertex[LabelOut,Lpos=180,Ldist = .01cm,x=1 ,y=3,L=$\color{blue}\vec{v}+\vec{e}^{\,d}_{jk}$]{8}
         
   \Edges[style={thick},color=black](1,4) 
     \Edges[style={thick},color=black](2,3) 
       \Edges[style={thick},color=black](5,8) 
         \Edges[style={thick},color=black](6,7) 
                      \Edges[style={-{stealth[length=4mm]},thick},color=red](1,2,6,5,1) 
                          \Edges[style={-{stealth[length=4mm]},thick},color=red](4,3,7,8,4)                 
    \end{tikzpicture}
    \end{center}
If $k \not\in\{1,j\}$, 
similar reasoning produces a 4-cycle with labels obtained by adding $\vec{e}^{\,d}_k$ to each label in the cycle on the left. Inductively, we can add an arbitrary sum 
 $\vec{v}=\sum_{k \not\in\{1,j\}} \vec{e}^{\,d}_k$ to each label,  using
the diagram on the right as a guide.  This yields 4-cycles based on  $\vec{e}^{\,d}_1$ and
 $\vec{e}^{\,d}_j$ covering all of $Q^0_d$ and
exhibiting the rotation $ P_1 P_{1j}$ explicitly. 
\end{proof}

\begin{lemma}\label{oppfaces}{\rm (Cancellation Across Opposite Faces).}
Let $\sigma$ be a nondegenerate singular $d+1$ cube with exactly two faces  supported by a $d$-cube $H$ in $G$, and suppose that these faces are opposite, say $f_k^-\sigma$ and $f_k^+\sigma$. Then $f_k^+\sigma$ is obtained from 
$f_k^-\sigma$ by a rotation of its domain by an element of  $\HH_d$, specifically, a quarter-turn  $P_i P_{ij}$ through one of its $2$-dimensional faces. 
Hence the contributions of $f_k^-\sigma$ and $f_k^+\sigma$ cancel out in the
computation of $\chi(\partial\sigma)$.

\end{lemma}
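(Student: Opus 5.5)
The plan is to compare the two opposite faces through a single vertex bijection of $Q^0_d$ and then apply Proposition \ref{pert}. Write $\alpha = f_k^-\sigma$ and $\beta = f_k^+\sigma$, both maps $Q_d\to G$ with image $V(H)$.

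First I will check that $\alpha$ and $\beta$ are graph isomorphisms $Q_d\to H$.
\begin{itemize}
\item Since $|V(H)| = 2^d = |Q_d|$ and the image is all of $V(H)$, each map is a bijection onto $V(H)$.
\item A graph map sends edges to edges or collapses them. A bijection cannot collapse an edge, so each map sends edges to edges.
\item Since $H$ is induced and isomorphic to $Q_d$, counting edges shows each is an isomorphism onto $H$.
\end{itemize}
Hence $T := \alpha^{-1}\circ\beta$ is a graph automorphism of $Q^0_d$, i.e.\ an element of $\HH_d$, and $\beta = \alpha\circ T$. In the paper's notation this reads $\beta = P\alpha$ with $P = T^{-1}$.

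Next I will show that $T$ displaces each vertex by Hamming distance at most $1$. For $v\in Q_d$, the vertices $(v,-)$ and $(v,+)$ of $Q_{d+1}$ are adjacent in direction $k$. So $\sigma(v,-)=\alpha(v)$ and $\sigma(v,+)=\beta(v)$ are equal or adjacent in $G$. Both lie in $H$, which is induced, so they are equal or adjacent in $H$. Pulling back by the isomorphism $\alpha$, $v$ and $T(v)$ are equal or adjacent in $Q_d$. Proposition \ref{pert} then says $T$ is the identity, an axis reflection $P_i$, or a quarter turn $P_iP_{ij}$.

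The main step is ruling out the first two options.
\begin{itemize}
\item If $T$ is the identity, then $\sigma(v,-)=\sigma(v,+)$ for all $v$. So $\sigma$ is constant along direction $k$, i.e.\ degenerate, contrary to hypothesis.
\item If $T=P_i$, consider the face of $\sigma$ given by fixing the $i$th coordinate of $Q_d$ at some value $a$, which is a face transverse to direction $k$. Its image consists of the values $\alpha(w)$ with $w_i=a$, together with $\beta(w)=\alpha(P_iw)$ with $w_i=a$, i.e.\ $\alpha(w')$ with $w'_i=-a$. These cover $\alpha(Q_d)=V(H)$, so this face is also supported by $H$. That gives a third supported face, contradicting the assumption that exactly two faces are supported.
\end{itemize}
Here I expect the only care needed is the index shift between coordinates of $Q_d$ and of $Q_{d+1}$, handled by the face relations. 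Thus $T$, and hence $P=T^{-1}$, is a quarter turn. A quarter turn is a rotation with determinant $+1$, so it has sign $+1$.

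Finally, cancellation. If $\alpha = Q\beta_0^H$, then $\beta = PQ\beta_0^H$, and $\sgn(PQ)=\sgn(P)\sgn(Q)=\sgn(Q)$. Hence $\chi(f_k^+\sigma)=\chi(f_k^-\sigma)$. In $\partial\sigma$ these faces carry the signs $(-1)^{k-1}\epsilon$ for $\epsilon=\pm1$, which are opposite. So their contributions to $\chi(\partial\sigma)$ cancel.
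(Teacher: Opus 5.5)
Your proposal is correct and follows the same route as the paper: you form $T=\alpha^{-1}\circ\beta$, check the hypotheses of Proposition \ref{pert}, exclude the identity by nondegeneracy and the reflection $P_i$ by exhibiting a transverse face also supported by $H$, and conclude by multiplicativity of the sign. You also fill in details the paper leaves implicit, namely that both faces are isomorphisms onto $H$ and the explicit ``easy argument'' in the reflection case. Your cancellation step is also stated more accurately than the paper's phrase ``$\chi(f_k^-\sigma)=\chi(f_k^+\sigma)=+1$'': what matters is that the two $\chi$-values agree while the boundary signs are opposite.
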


\begin{proof} 
Faces  $f_k^-\sigma$ and $f_k^+\sigma$ define a bijective graph map $T: Q^0_d\to Q^0_d$ satisfying the conditions
of Proposition \ref{pert}.  Clearly $T$ is not the identity, since $\sigma$ is not degenerate. Hence by Proposition \ref{pert}, either
$T = P_i P_{ij}$ or $T = P_i$, for some $i,j$. An easy argument shows that in the latter case there are four faces supported by $H$, 
not two as was assumed. Hence $T = P_i P_{ij}$ and since $\chi(f_k^-\sigma) = \chi(f_k^+\sigma) = +1$, the contributions of these
faces cancel out in the computation of $\chi(\partial\sigma)$.
\end{proof}

Combining Lemmas \ref{adjfaces} and \ref{oppfaces}, we can now prove Lemma \ref{chilemma} (Main Lemma).

\medskip
\noindent
{\bf Proof of Lemma \ref{chilemma}.} By Lemma \ref{lemma0124}, if $\sigma$ has any faces supported by $H$, then there are just three possible cases: 
\begin{enumerate}
\item [(1)]
a single adjacent pair, $f_i^{\epsilon_i} \sigma, f_j^{\epsilon_j} \sigma$, $i\ne j$,
\item[(2)]
a $4$-cycle of faces, each adjacent to the next,
\item[(3)]
a single opposite pair,
$f_i^{- }\sigma, f_i^{+} \sigma$.
\end{enumerate}
Applying $\chi$ to $\partial \sigma$ leaves either $2$ or $4$ nonzero terms, according to these cases.  Lemma \ref{adjfaces} shows that these terms cancel in cases (1) and (2). Lemma \ref{oppfaces} shows that they cancel in case (3). This completes the proof of Lemma \ref{chilemma}. \qed

The first main theorem is now an easy consequence.

\smallskip
\noindent
{\bf Proof of Theorem \ref{maintheorem} (First Main Theorem).} 
The first statement is an immediate consequence of Lemma \ref{chilemma}. 
To prove the second statement, suppose that
$\alpha: (I_\infty^d, \partial I_\infty^d) \to (G,v_0)$, and 
$h:I_\infty^d \times I_m \to G$ is a homotopy of length $m$ from $\alpha$ to the zero map. If we count 
(with multiplicity) the number of occurrences of $d$-cubes supported by $H$ as faces of $(d+1)$-cubes
in $\alpha(I_\infty^d \times I_m)$, this number is odd:  it is odd on the top layer $ I_\infty^d \times \{0\}$, even on the bottom layer $ I_\infty^d \times \{m\}$, and 
occurrences in the middle layers come in pairs. However, by hypothesis and Lemma \ref{lemma0124}, the number on each $(d+1)$-cube is even. 
This is a contradiction. \qed

We proceed next to the proof of Lemma \ref{gridlemma}. The argument uses essential lemmas from arguments in \cite{JAM}, \cite{JSA},
used originally to prove a different but related result. 

\smallskip
\noindent
{\bf Proof of Theorem \ref{gridlemma} (Second Main Theorem).} 
 Let $H$ be a rigid $d$-cube in $G$ with a grid neighborhood  $N_G(H)$,
identified with a neighborhood of the unit 
cube $Q^0_d = \{0,1\}^{\,d}$ in $\ZZ^d$. To reduce notational confusion, we denote elements of $N_G(H)$
by $h_{S^*} = \sum _{ k \in S^*} \pm  \vec{e}^{\,d} _{|k|}$, $S^* \subseteq \{\pm 1,\dots,\pm d\}$.
For example, $h_{-1} = - \vec{e}^{\,d}_1 = - h_1$ and $h_\emptyset = \vec{0}^d$. 
 As before, $Q_{d+1}$ is identified with the unit cube $Q^0_{d+1}=\{0,1\}^{d+1}$ in $\ZZ^{d+1}$,
 with vertices $\vec{e}^{\,d+1}_S$, $S \subseteq \{1,\dots,d+1\}$.  
 
 We will repeatedly use the fact that, in a grid graph of any dimension, 
 two distinct vertices $\vec{u}$ and $\vec{v}$ can have at most two common neighbors. 

Let $\sigma: Q^0_{d+1} \to G$ be a singular $(d+1)$-cube in which $f^-_{d+1}\sigma$ is supported by $H$. We will show that $\sigma$ has at 
least one more face supported by $H$. Suppose, to the contrary, that there is no such face. In particular, $f^+_{d+1}\sigma$ is not supported by $H$, so some element in $ H$ does not appear in $\Im(f^+_{d+1}\sigma)$.   
Assume, without loss of generality, that 
$h_\emptyset\in H$ is such an element, with $\sigma(\vec{0}^{d+1}) = h_\emptyset= \vec{0}^d$ and also
$\sigma(\vec{e}^{\,_{d+1}}_{i}) = h_i$ for $i = 1,\dots,d$. Note that, under this assumption, $h_\emptyset$ appears only once in $\Im(\sigma)$.

We first show that, if  $x= \sigma(\vec{e}^{\,_{d+1}}_{d+1})$, then  $x \in \{h_1,\dots,h_d\}$. Otherwise
$x$ would be another vertex in $N_G(H)$ adjacent to $h_\emptyset$, i.e.,  $x  \in \{-h_2,\dots,-h_d\}$.  But if 
$x = -  h_i$, then $\sigma(\vec{e_i} + \vec{e_{d+1}}) =h_\emptyset$ is forced, since $h_\emptyset$ is the only common neighbor of $h_i$ and $-h_i$ in $N_G(H)$:
  \begin{center}
     \begin{tikzpicture}[scale=1.5]
\tikzset{VertexStyle/.style = {shape= circle,
ball color = gray,text= black,inner sep= 0pt,outer sep= 0pt,minimum size= 5pt}} 
   \Vertex[LabelOut,Lpos=225,x=0 ,y=0,L=${\color{blue}h_\emptyset}$]{1}
     \Vertex[LabelOut,Lpos=325,x=1 ,y=0,L=${\color{blue}h_i}$]{2}
       \Vertex[LabelOut,Lpos=180,x=0 ,y=1,L=${\color{blue}-h_i}$]{3}
         \Vertex[LabelOut,Lpos=0,x=1 ,y=1,L=${\color{blue} \sigma(\vec{e}_i+\vec{e}_{d+1})=h_\emptyset }$]{4}                   
    \Edge[style={thick},color=black](1)(2)
      \Edge[style={thick},color=black](3)(4)
        \Edge[style={thick},color=black](1)(3)
          \Edge[style={thick},color=black](2)(4)      
          \end{tikzpicture} 
    \end{center}
This contradicts the assumption that $h_\emptyset$ occurs only once in $\sigma$. 
Thus $\sigma(\vec{e}^{\,_{d+1}}_{d+1})= h_k$, for some $k \in \{1,\dots,d\}$. 
Using this value of $k$, we will prove that
 that $f_k^-(\sigma)$ is a second face supported by $H$.
 
The argument will show (by induction on $|S|$) that for all subsets
$S \subseteq \{1,2,\dots, \hat{k}, \dots,d\}$, we have 
\begin{equation}\label{remainingterms}
\sigma( \vec{e}^{\,d+1}_S+ \vec{e}^{\,d+1}_{k})=\sigma( \vec{e}^{\,d+1}_S+ \vec{e}^{\,d+1}_{d+1})= h_{S\cup k}. 
\end{equation}
Note that
the values of $\sigma( \vec{e}^{\,d+1}_S)$ range over the common face of $f^-_{d+1}\sigma$ and $f^-_{k}\sigma$; 
in (\ref{remainingterms}) we are showing that the remaining values on those  two faces agree.  
Figure \ref{figure1} above represents an example where a singular 4-cube $\sigma$ supports a $3$-cube $H$ on  faces, $f_4^-(\sigma)$ and $f_2^-(\sigma)$, with their 
common 2-face shown in black.  The task of the proof in this case is to show that labels on the faces shown in red 
(belonging to $f^-_{4}\sigma$) and blue (belonging to $f^-_{2}\sigma$) agree.
\begin{figure}
\begin{center}
     \begin{tikzpicture}[scale=1.5]
\tikzset{VertexStyle/.style = {shape= circle,
ball color = gray,text= black,inner sep= 0pt,outer sep= 0pt,minimum size= 5pt}} 
   \Vertex[LabelOut,Lpos=225,x=0 ,y=0,L=${\color{black}h_\emptyset}$]{1}
     \Vertex[LabelOut,Lpos=325,x=1 ,y=0,L=${\color{black}h_1}$]{2}
       \Vertex[LabelOut,Lpos=180,x=.5 ,y=.5,L=${\color{red}h_2}$]{3}
         \Vertex[LabelOut,Lpos=0,x=1.5 ,y=.5,L=${\color{red}h_{12}}$]{4}
            \Vertex[LabelOut,Lpos=180,x=0 ,y=1,L=${\color{black}h_3}$]{5}
               \Vertex[LabelOut,Lpos=0,x=1 ,y=1,L=${\color{black}h_{13}}$]{6}
                  \Vertex[LabelOut,Lpos=90,x=.5 ,y=1.5,L=${\color{red}h_{23}}$]{7}
                     \Vertex[LabelOut,Lpos=90,x=1.5 ,y=1.5,L=${\color{red}h_{123}}$]{8}
    \Vertex[LabelOut,Lpos=185,x=2.5 ,y=1.5,L=${\color{blue}h_2}$]{9}
     \Vertex[LabelOut,Lpos=325,x=3.5 ,y=1.5,L=${\color{blue}h_{12}}$]{10}
       \Vertex[LabelOut,Lpos=180,x=3 ,y=2,L=${\color{blue}}$]{11}
         \Vertex[LabelOut,Lpos=0,x=4 ,y=2,L=${\color{blue}}$]{12}
            \Vertex[LabelOut,Lpos=180,x=2.5 ,y=2.5,L=${\color{blue}h_{23}}$]{13}
               \Vertex[LabelOut,Lpos=0,x=3.5 ,y=2.5,L=${\color{blue}h_{123}}$]{14}
                  \Vertex[LabelOut,Lpos=180,x=3 ,y=3,L=${\color{blue}}$]{15}
                     \Vertex[LabelOut,Lpos=0,x=4 ,y=3,L=${\color{blue}}$]{16}

    \Edge[style={line width = 2pt},color=black](1)(2)
      \Edge[style={line width = 2pt},color=red](3)(4)
        \Edge[style={thick},color=black](1)(3)
          \Edge[style={thick},color=black](2)(4)      
      \Edge[style={line width = 2pt},color=red](7)(8)
        \Edge[style={thick},color=black](5)(7)
          \Edge[style={thick},color=black](6)(8)
          \Edge[style={line width = 2pt},color=black](1)(5)
      \Edge[style={line width = 2pt},color=black](2)(6)
        \Edge[style={line width = 2pt},color=red](3)(7)
           \Edge[style={line width = 2pt},color=black](5)(6)
          \Edge[style={ line width = 2pt},color=red](4)(8)        
            \Edge[style={line width = 2pt},color=blue](9)(10)
      \Edge[style={thick},color=black](11)(12)
        \Edge[style={thick},color=black](9)(11)
          \Edge[style={thick},color=black](10)(12)
           \Edge[style={line width = 2pt},color=blue](13)(14)
      \Edge[style={thick},color=black](15)(16)
        \Edge[style={thick},color=black](13)(15)
          \Edge[style={thick},color=black](14)(16)
          \Edge[style={line width = 2pt},color=blue](9)(13)
      \Edge[style={line width = 2pt},color=blue](10)(14)
        \Edge[style={thick},color=black](11)(15)
          \Edge[style={ thick},color=black](12)(16)
    \end{tikzpicture}
    \end{center}
\caption{Singular 4-cube supporting a 3-cube $H$ on faces $f_4^-$ and $f_2^-$, with $k=2$.}
\label{figure1}
\end{figure}
For $|S|=0$, the statement follows from the definition of $ k$.  For $|S|=1$, say $S =\{j\}$, 
we know that $\sigma(\vec{e}^{\,d+1}_j+ \vec{e}^{\,d+1}_{d+1})$ is a common neighbor of  
$\sigma(\vec{e}^{\,d+1}_j)=h_j$ and  $\sigma(\vec{e}^{\,d+1}_{d+1})=h_k$ in $H$, which forces   $\sigma(\vec{e}^{\,d+1}_j+ \vec{e}^{\,d+1}_{d+1})$ to
be either $\vec{0}^d$ or $h_{jk}$. But $\vec{0}^d$ appears only once in $\Im(\sigma)$,  hence $\sigma(\vec{e}^{\,d+1}_j+ \vec{e}^{\,d+1}_{d+1}) = h_{jk}$ as claimed.

  Finally, suppose $|S| \ge 2$, say $S = \{s_1,\dots,s_j\} \subseteq \{1,2,\dots,\hat{k},\dots,d\}$ with $j \ge 2$. For each 2-subset 
  $\{p,q\} \subseteq S\cup k$, there is a 2-face of $f_k^-(\sigma)$ of the form
  
  \begin{center}
     \begin{tikzpicture}[scale=1.5]
\tikzset{VertexStyle/.style = {shape= circle,
ball color = gray,text= black,inner sep= 0pt,outer sep= 0pt,minimum size= 5pt}} 
   \Vertex[LabelOut,Lpos=225,x=0 ,y=0,L=${\color{blue}D}$]{1}
     \Vertex[LabelOut,Lpos=325,x=1 ,y=0,L=${\color{blue}C}$]{2}
       \Vertex[LabelOut,Lpos=180,x=0 ,y=1,L=${\color{blue}B}$]{3}
         \Vertex[LabelOut,Lpos=0,x=1 ,y=1,L=${\color{blue}A= \sigma(\vec{e}^{\,d+1}_{S\cup k })}$]{4}                   
    \Edge[style={thick},color=black](1)(2)
      \Edge[style={thick},color=black](3)(4)
        \Edge[style={thick},color=black](1)(3)
          \Edge[style={thick},color=black](2)(4)      
          \end{tikzpicture} 
    \end{center}
 where 
 $A= \sigma(\vec{e}^{\,d+1}_{S\cup k })$,  
 and the remaining values $B,C$, and $D$ have been determined to satisfy (\ref{remainingterms}). 
By the inductive hypothesis, 
 $B, C$, and $D$ agree with the corresponding values of $\sigma$ with $k$ replaced by $d+1$, which lie on 
 a 2-face of $H$ with $D$ a common neighbor of $B$ and $C$.  Since $A$ is a common neighbor of $B$
 and $C$, the geometry of $N_G(H)$ forces $A$ to be either $\sigma(\vec{e}^{\,d+1}_{S\cup (d+1) })$ or 
 $\sigma(\vec{e}^{\,d+1}_{S\cup (d+1)-\{p,q\} })$. However, since the latter values are different for each choice of
 $p$ and $q$,  the only consistent choice is $A = \sigma(\vec{e}^{\,d+1}_{S\cup (d+1) }) = h_S$ for all $p,q$, and the induction is complete.
 \qed

\section{ Monophobicity in Dimension $2$}

\begin{theorem} \label{dim2theorem}
Let $H$
denote a rigid $2$-cube in $G$. Then $N_G(H)$  is monophobic if and only if it contains no
subgraphs isomorphic (by an $H$-preserving map) to one of the following graphs.

\begin{center}
 \begin{tikzpicture}[scale=.5]
\tikzset{VertexStyle/.style = {shape= circle,
ball color = red,inner sep= 0pt,outer sep= 0pt,minimum size= 7 pt}}   
\Vertex[x=2 ,y=2,L=$$]{1}
   \Vertex[x=5 ,y=2,L=$$]{2}
   \Vertex[x=4 ,y=4,L=$$]{3}
   \Vertex[x=7 ,y=4,L=$$]{4}
   \tikzset{VertexStyle/.style = {shape= circle,
ball color =  black,inner sep= 0pt,outer sep= 0pt,minimum size=6 pt}}     
   \Vertex[x=4.5 ,y=7,L=$$]{5}
      \Edge[style={ultra thick},color=red](1)(2)  \Edge[style={ultra thick},color=red](2)(4)  \Edge[style={ultra thick},color=red](3)(4)
    \Edge[style={ultra thick},color=red](1)(3)       
   \Edge(2)(5)
      \Edge(3)(5)
     \node  [color=blue] at (4,0) {$K_{23}$};
  \end{tikzpicture}
  \qquad\qquad
  \begin{tikzpicture}[scale=.5]
\tikzset{VertexStyle/.style = {shape= circle,
ball color = red,inner sep= 0pt,outer sep= 0pt,minimum size= 7 pt}}   
\Vertex[x=2 ,y=2,L=$$]{1}
   \Vertex[x=5 ,y=2,L=$$]{2}
   \Vertex[x=4 ,y=4,L=$$]{3}
   \Vertex[x=7 ,y=4,L=$$]{4}
   \tikzset{VertexStyle/.style = {shape= circle,
ball color =  black,inner sep= 0pt,outer sep= 0pt,minimum size=6 pt}}     
  \Vertex[x=2 ,y=5,L=$$]{5}
   \Vertex[x=5 ,y=5,L=$$]{6}
   \Vertex[x=4 ,y=7,L=$$]{7}
   \Vertex[x=7 ,y=7,L=$$]{8}
      \Edge[style={ultra thick},color=red](1)(2)  \Edge[style={ultra thick},color=red](2)(4)  \Edge[style={ultra thick},color=red](3)(4)
    \Edge[style={ultra thick},color=red](1)(3)       
   \Edge(1)(5)
      \Edge(2)(6)
       \Edge(3)(7)
        \Edge(4)(8)
   \Edge(5)(6)
    \Edge(7)(8)
     \Edge(5)(7)
      \Edge(6)(8)
     \node  [color=blue] at (4,0) {$Q_3$};
  \end{tikzpicture}
  
  \begin{tikzpicture}[scale=.5]
\tikzset{VertexStyle/.style = {shape= circle,
ball color = red,inner sep= 0pt,outer sep= 0pt,minimum size= 7 pt}}   
\Vertex[x=2 ,y=2,L=$$]{1}
   \Vertex[x=5 ,y=2,L=$$]{2}
   \Vertex[x=4 ,y=4,L=$$]{3}
   \Vertex[x=7 ,y=4,L=$$]{4}
   \tikzset{VertexStyle/.style = {shape= circle,
ball color =  black,inner sep= 0pt,outer sep= 0pt,minimum size=6 pt}}     
  \Vertex[x=2 ,y=5,L=$$]{5}
   \Vertex[x=5 ,y=5,L=$$]{6}
   \Vertex[x=11/2 ,y=7,L=$$]{7}
      \Edge[style={ultra thick},color=red](1)(2)  \Edge[style={ultra thick},color=red](2)(4)  \Edge[style={ultra thick},color=red](3)(4)
    \Edge[style={ultra thick},color=red](1)(3)       
   \Edge(1)(5)
      \Edge(2)(6)
       \Edge(3)(7)
   \Edge(5)(6)
     \Edge(5)(7)
      \Edge(6)(7)
      \Edge(4)(7)
     \node  [color=blue] at (4,0) {$Q_3^*$};
  \end{tikzpicture}
  \qquad\qquad
  \begin{tikzpicture}[scale=.5]
\tikzset{VertexStyle/.style = {shape= circle,
ball color = red,inner sep= 0pt,outer sep= 0pt,minimum size= 7 pt}}   
\Vertex[x=2 ,y=2,L=$$]{1}
   \Vertex[x=5 ,y=2,L=$$]{2}
   \Vertex[x=4 ,y=4,L=$$]{3}
   \Vertex[x=7 ,y=4,L=$$]{4}
   \tikzset{VertexStyle/.style = {shape= circle,
ball color =  black,inner sep= 0pt,outer sep= 0pt,minimum size=6 pt}}     
  \Vertex[x=2 ,y=5,L=$$]{5}
   \Vertex[x=4 ,y=7,L=$$]{7}
      \Edge[style={ultra thick},color=red](1)(2)  \Edge[style={ultra thick},color=red](2)(4)  \Edge[style={ultra thick},color=red](3)(4)
    \Edge[style={ultra thick},color=red](1)(3)       
   \Edge(1)(5)
         \Edge(3)(7)
     \Edge(5)(7)
       \Edge(2)(5)
         \Edge(4)(7)
     \node  [color=blue] at (4,0) {$Q_3^{**}$};
  \end{tikzpicture}
  \qquad\qquad
  \begin{tikzpicture}[scale=.5]
\tikzset{VertexStyle/.style = {shape= circle,
ball color = red,inner sep= 0pt,outer sep= 0pt,minimum size= 7 pt}}   
\Vertex[x=2 ,y=2,L=$$]{1}
   \Vertex[x=5 ,y=2,L=$$]{2}
   \Vertex[x=4 ,y=4,L=$$]{3}
   \Vertex[x=7 ,y=4,L=$$]{4}
   \tikzset{VertexStyle/.style = {shape= circle,
ball color =  black,inner sep= 0pt,outer sep= 0pt,minimum size=6 pt}}     
  \Vertex[x=2 ,y=5,L=$$]{5}
   \Vertex[x=4 ,y=7,L=$$]{7}
   \Vertex[x=7 ,y=7,L=$$]{8}
      \Edge[style={ultra thick},color=red](1)(2)  \Edge[style={ultra thick},color=red](2)(4)  \Edge[style={ultra thick},color=red](3)(4)
    \Edge[style={ultra thick},color=red](1)(3)       
   \Edge(1)(5)
         \Edge(3)(7)
        \Edge(4)(8)
    \Edge(7)(8)
     \Edge(5)(7)
        \Edge(2)(5)
              \Edge(2)(8)
     \node  [color=blue] at (4,0) {$Q_3^{***}$};
  \end{tikzpicture}
  \end{center}

\end{theorem}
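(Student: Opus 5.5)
Label the vertices of $H$ as $a,b,c,d$, with $a\sim b$, $b\sim d$, $d\sim c$, $c\sim a$. We will prove both directions by working with a singular $3$-cube $\sigma$ whose face $f_3^-\sigma$ is supported by $H$.

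\emph{Necessity.} For each of the five excluded configurations we write down an explicit monotolerant witness. The method is to place $H$ on the bottom face of $\sigma$ and send the top four vertices into the configuration so that no other face of $\sigma$ is supported by $H$.
\begin{itemize}
\item For $Q_3$: map $Q_3$ identically onto the embedded cube. The top face then lands on the four outer vertices, and each side face uses two outer vertices.
\item For $K_{23}$, with the extra vertex $x$ adjacent to $b$ and $c$: send the top face to $(x,x,x,x)$ if $x\sim a$ as well. Otherwise use a top face such as $(c,x,x,b)$, i.e.\ a collapse of $H$ onto the path $c\,x\,b$. We then check that each side face contains $x$ or a repeated vertex.
\item For $Q_3^*$, $Q_3^{**}$, $Q_3^{***}$: these are $Q_3$ with some outer vertices identified, or with an outer vertex equal to a red vertex. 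The same identity-type map works, with those identifications composed in. For example, in $Q_3^{**}$ the top face is $(x,b,y,d)$ with $x\sim a,b$ and $y\sim c,d$.
\end{itemize}
In each case we verify directly that the only face of $\sigma$ supported by $H$ is the bottom one. Since $N_G(H)$ contains such a subgraph as an $H$-preserving subgraph, these witnesses are also singular cubes in $N_G(H)$.

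\emph{Sufficiency.} Assume $\sigma$ has exactly one face supported by $H$, namely $f_3^-\sigma$, with bottom values $a,b,c,d$ at $000,100,010,110$. Let the top values be $a',b',c',d'$, so $a'\in N[a]$ and so on, and the top forms a (possibly degenerate) singular square.
\begin{enumerate}
\item Suppose some top value lies in $H$. Because the side faces are not supported by $H$, each top value must avoid making its side face equal to $H$. A short enumeration of the cases, using rigidity of $H$ (no diagonals) and the lemma that a singular cube supported by a rigid cube is determined by one face, should show the following. Either some outer vertex is adjacent to two opposite corners of $H$, giving $K_{23}$; or we obtain $Q_3^{***}$ or $Q_3^{*}$.
\item Suppose instead all top values lie outside $H$. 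Count the number of distinct top values.
\begin{itemize}
\item If there are four distinct values, the top square together with the vertical edges gives a $Q_3$ subgraph.
\item If there are three, the top forms a path with one identification. If the identified pair is adjacent in the top square, we get $Q_3^*$. If it is a diagonal pair, the repeated vertex is adjacent to two opposite corners of $H$, giving $K_{23}$.
\item If there are two distinct values, or only one, some outer vertex is adjacent to opposite corners of $H$, again giving $K_{23}$.
\end{itemize}
\end{enumerate}

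The main obstacle is case 1. Mixed configurations, in which some top values equal vertices of $H$ and others are outside, must be shown to land in $Q_3^{**}$ or $Q_3^{***}$ and in nothing unlisted. I would organize this subcase by the set of top positions whose value lies in $H$, reduced modulo the symmetries $\HH_2$ of $H$. For each such pattern I would check that every completion either creates a second $H$-face or exhibits one of the five listed subgraphs. Along the way I would compare against Examples 2.3 and 2.4, in particular the contrast between (C) and (F), to confirm the list is complete.
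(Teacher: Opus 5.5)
There is a genuine gap in the sufficiency direction. Your necessity witnesses are fine; for $K_{23}$ just use the top $(c,x,x,b)$, since the $(x,x,x,x)$ variant needs $x\sim d$ as well, not only $x\sim a$.

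\textbf{Case~1 is not proved, and its predicted outcome is wrong.} Essentially all the work of sufficiency lies in Case~1, and you offer only a prediction. Consider the witness with top $(x,b,y,d)$, where $x\sim a,b$, $y\sim c,d$, $x\sim y$. It has top values in $H$, yet contains no $K_{23}$, $Q_3^*$ or $Q_3^{***}$. It is exactly $Q_3^{**}$, which your Case~1 list omits, although your closing paragraph mentions it.

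\textbf{Case~2 contains a false step.} Take two distinct top values arranged as $(x,x,y,y)$, one over the edge $ab$ and one over $cd$. Then $x\sim a,b$, $y\sim c,d$, $x\sim y$, and neither vertex is adjacent to opposite corners of $H$. This is again $Q_3^{**}$, not $K_{23}$. Only patterns in which a single value occupies two diagonal top positions force $K_{23}$.

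\textbf{The missing idea is the paper's \emph{straight collapse} reduction.} If $N_G(H)$ contains no $K_{23}$, then in any witness every top value either equals the bottom vertex directly beneath it or lies outside $H$. The argument runs as follows.
\begin{enumerate}
\item Up to symmetry, a violation is $a'=b$.
\item Then $c'\in N[c]\cap N[b]$, which meets $H$ in $\{a,d\}$. If $c'=d$, the side face through $a,c$ equals $H$. If $c'\notin H$, then $c'$ is adjacent to the opposite corners $b,c$, giving a $K_{23}$. Hence $c'=a$.
\item Applying the same three-way exclusion as you walk around the top square forces $d'=c$ and then $b'=d$.
\item So the top face is $(b,d,a,c)$, which is supported by $H$, a contradiction.
\end{enumerate}

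With this lemma the enumeration is short:
\begin{itemize}
\item no top vertex staying is your Case~2, corrected as above;
\item one stay gives $Q_3^{***}$ or $K_{23}$;
\item two adjacent stays give $Q_3^{**}$ or $K_{23}$;
\item two diagonal stays, or three stays, give $K_{23}$;
\item four stays put $H$ on the top face.
\end{itemize}
Without the lemma, your planned enumeration by ``the set of top positions whose value lies in $H$'' must dispose of non-straight values like $a'=b$ by hand. That is precisely the part the proposal leaves undone.
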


\begin{proof}
It can easily be verified that each of the above graphs permits the construction of a monotolerant witness, that is, a singular $3$-cube with $H$ is supported on only one face. If $N_G(H)$ contains such a graph, it is not monophobic.
For example, suppose that $V(H)=\{1,2,3,4\}$ and $K_{23}$ appears in $N_G(H)$ with labels as shown:
\medskip
\begin{center}
 \begin{tikzpicture}[scale=.5]
\tikzset{VertexStyle/.style = {shape= circle,
ball color = red,inner sep= 0pt,outer sep= 0pt,minimum size= 7 pt}}   
\Vertex[LabelOut,Lpos = 180,x=2 ,y=2,L=${\color{black}1}$]{1}
   \Vertex[LabelOut,Lpos = 0,x=5 ,y=2,L=${\color{black}2}$]{2}
   \Vertex[LabelOut,Lpos = 180,x=4 ,y=4,L=${\color{black}3}$]{3}
   \Vertex[LabelOut,Lpos = 0,x=7 ,y=4,L=${\color{black}4}$]{4}
   \tikzset{VertexStyle/.style = {shape= circle,
ball color =  black,inner sep= 0pt,outer sep= 0pt,minimum size=6 pt}}     
   \Vertex[LabelOut,Lpos = 180,x=4.5 ,y=7,L=${\color{black}5}$]{5}
      \Edge[style={ultra thick},color=red](1)(2)  \Edge[style={ultra thick},color=red](2)(4)  \Edge[style={ultra thick},color=red](3)(4)
    \Edge[style={ultra thick},color=red](1)(3)       
   \Edge(2)(5)
      \Edge(3)(5)
  \end{tikzpicture}
  \end{center}
Then 
\begin{center}
\begin{tikzpicture}[scale=1.5]
\tikzset{VertexStyle/.style = {shape= circle,
ball color = gray,text= black,inner sep= 0pt,outer sep= 0pt,minimum size= 5pt}} 
   \Vertex[LabelOut,Lpos=180,x=0 ,y=0,L=${\color{blue}1}$]{1}
     \Vertex[LabelOut,Lpos=0,x=1 ,y=0,L=${\color{blue}2}$]{2}
       \Vertex[LabelOut,Lpos=180,x=.5 ,y=.5,L=${\color{blue}3}$]{3}
         \Vertex[LabelOut,Lpos=0,x=1.5 ,y=.5,L=${\color{blue}4}$]{4}
            \Vertex[LabelOut,Lpos=180,x=0 ,y=1,L=${\color{blue}3}$]{5}
               \Vertex[LabelOut,Lpos=0,x=1 ,y=1,L=${\color{blue}5}$]{6}
                  \Vertex[LabelOut,Lpos=180,x=.5 ,y=1.5,L=${\color{blue}5}$]{7}
                     \Vertex[LabelOut,Lpos=0,x=1.5 ,y=1.5,L=${\color{blue}3}$]{8}
    \Edge[style={thick},color=blue](1)(2)
      \Edge[style={thick},color=blue](3)(4)
        \Edge[style={thick},color=blue](1)(3)
          \Edge[style={thick},color=blue](2)(4)
           \Edge[style={thick},color=blue](5)(6)
      \Edge[style={thick},color=blue](7)(8)
        \Edge[style={thick},color=blue](5)(7)
          \Edge[style={thick},color=blue](6)(8)
          \Edge[style={thick},color=blue](1)(5)
      \Edge[style={thick},color=blue](2)(6)
        \Edge[style={thick},color=blue](3)(7)
          \Edge[style={ thick},color=blue](4)(8)
    \end{tikzpicture}
    \end{center}
is a singular 3-cube witnessing non-monophobicity (=monotolerance).

Conversely, if  $N_G(H)$ is not monophobic,  there exists a singular cube $\sigma =$
\begin{center}
\begin{tikzpicture}[scale=1.5]
\tikzset{VertexStyle/.style = {shape= circle,
ball color = gray,text= black,inner sep= 0pt,outer sep= 0pt,minimum size= 5pt}} 
   \Vertex[LabelOut,Lpos=180,x=0 ,y=0,L=${\color{blue}1}$]{1}
     \Vertex[LabelOut,Lpos=0,x=1 ,y=0,L=${\color{blue}2}$]{2}
       \Vertex[LabelOut,Lpos=180,x=.5 ,y=.5,L=${\color{blue}3}$]{3}
         \Vertex[LabelOut,Lpos=0,x=1.5 ,y=.5,L=${\color{blue}4}$]{4}
            \Vertex[LabelOut,Lpos=180,x=0 ,y=1,L=${\color{blue}a_1}$]{5}
               \Vertex[LabelOut,Lpos=0,x=1 ,y=1,L=${\color{blue}a_2}$]{6}
                  \Vertex[LabelOut,Lpos=180,x=.5 ,y=1.5,L=${\color{blue}a_3}$]{7}
                     \Vertex[LabelOut,Lpos=0,x=1.5 ,y=1.5,L=${\color{blue}a_4}$]{8}
    \Edge[style={thick},color=blue](1)(2)
      \Edge[style={thick},color=blue](3)(4)
        \Edge[style={thick},color=blue](1)(3)
          \Edge[style={thick},color=blue](2)(4)
           \Edge[style={thick},color=blue](5)(6)
      \Edge[style={thick},color=blue](7)(8)
        \Edge[style={thick},color=blue](5)(7)
          \Edge[style={thick},color=blue](6)(8)
          \Edge[style={thick},color=blue](1)(5)
      \Edge[style={thick},color=blue](2)(6)
        \Edge[style={thick},color=blue](3)(7)
          \Edge[style={ thick},color=blue](4)(8)
    \end{tikzpicture}
    \end{center}
 in which $H$ is supported only by one face (in this case, the bottom face). We will show that, in this case, $N_G(H)$ must contain 
 one of the graphs listed above. Specifically, we claim that if  $K_{23}$ does not occur in $N_G(H)$, then $\sigma$ 
 represents a {\em straight collapse onto $H$}, which means (by definition) that for $i=1,\dots,4$, either $a_i=i$ or 
 $a_i \not\in H$. If this condition is satisfied, it is easy to show that one of the five graphs in the statement of the theorem must appear: if $a_i\not\in H$ 
 for all $i$, one obtains either $Q_3$ or $Q_3^*$, and otherwise one obtains either $Q_3^{**}$ or $Q_3^{***}$.

To prove the claim,  we show that  if $a_i=H$ and $a_i\ne i$ for some $i$, there is a second face supported by $H$.
For example, if $a_1\in H$, then $a_1\ne 4$ by rigidity. If $a_1=2$, consider the possibilities for $a_3$. We have
$a_3\ne 2$ (by rigidity), and $a_3 \ne 4$ (this creates another face supported by $H$). Finally, $a_3\not\in H$ is not possible, since this
creates a subgraph of $N_G(H)$ isomorphic to $K_{23}$. Hence $a_3 = 1$ is forced.  Walking around the top face, we can apply exactly analogous arguments to deduce $a_4 = 3$ and $a_2 = 4$. But this has produced a second face supported by $H$, contrary to our assumption about $\sigma$.  This establishes the claim, and completes the proof of Theorem \ref{dim2theorem}

\end{proof}

\section{ Sufficient Condition for Monophobicity in Dimensions $d \ge 2$}

One can check that the five neighborhoods listed in Theorem \ref{dim2theorem} are {\em minimally monotolerant},
that is, they become monophobic if one removes any edge in $N_G(H) - H$.  A complete list of minimal monotolerants  
would solve the characterization problem
in higher dimensions as well.
In this section we will define analogs of $K_{23}$ in dimension $d$ for $d>2$, and these would be on such a list. 
Analogs of the other neighborhoods  in Theorem \ref{dim2theorem} ($Q_3, Q_3^*, Q_3^{**}$, and $Q_3^{***}$) 
clearly exist in all dimensions.  However, despite this promising start, computations suggest that
 a complete list of minimal monotolerants will be prohibitively large, even in dimension 3. 
Instead we will settle for a sufficient (but not necessary for $d>2$) condition. Although
not a complete solution to the problem, it can be used to facilitate hand computation in some cases.

\begin{definition}\label{forcingatv} {\rm (Forcing at $h_0$)} Suppose $H$ is a rigid $d$-cube in $G$, and  $h_0\in H$. Then $N_G(H)$ 
satisfies {\em  forcing  at $h_0$} if, for every $p \in N_G(h_0) - H$,  there exists a vertex $h'\in (N_G(h_0) \cap H)-h_0$ such that 
any (hence all) of the following equivalent conditions holds:
\begin{itemize}
\item
every path
$p \simeq x \simeq h'$ implies  $x=h_0$,
\item
$N_G(H)$ does not contain either of the configurations
\begin{center}
\begin{tikzpicture}[scale=1]
\tikzset{VertexStyle/.style = {shape= circle,
ball color = gray,text= black,inner sep= 0pt,outer sep= 0pt,minimum size= 5pt}} 
   \Vertex[LabelOut,Lpos=180,x=0 ,y=0,L=${\color{blue}h_0}$]{1}
     \Vertex[LabelOut,Lpos=0,x=2 ,y=0,L=${\color{blue}h'}$]{2}
       \Vertex[LabelOut,Lpos=90,x=1 ,y=1,L=${\color{blue}p}$]{3}     
    \Edge[style={thick},color=blue](1)(2)
      \Edge[style={thick},color=blue](1)(3)
        \Edge[style={thick},color=blue](2)(3)
    \end{tikzpicture}
    \qquad
    \begin{tikzpicture}[scale=1]
\tikzset{VertexStyle/.style = {shape= circle,
ball color = gray,text= black,inner sep= 0pt,outer sep= 0pt,minimum size= 5pt}} 
   \Vertex[LabelOut,Lpos=180,x=0 ,y=0,L=${\color{blue}h_0}$]{1}
     \Vertex[LabelOut,Lpos=0,x=2 ,y=0,L=${\color{blue}h'}$]{2}
       \Vertex[LabelOut,Lpos=90,x=0 ,y=1,L=${\color{blue}p}$]{3}   
              \Vertex[LabelOut,Lpos=90,x=2 ,y=1,L=${\color{blue}q}$]{4}     
    \Edge[style={thick},color=blue](1)(2)
      \Edge[style={thick},color=blue](3)(4)
        \Edge[style={thick},color=blue](1)(3)
               \Edge[style={thick},color=blue](2)(4)
    \end{tikzpicture}
    \end{center}
\item
 $h_0$ is
 the unique common neighbor of $p$ and $h'$ (a vertex can be a neighbor of itself),
  \item
 if
 $\sigma=$
    \raisebox{-1.5em}{
\begin{tikzpicture}[scale=1]
\tikzset{VertexStyle/.style = {shape= circle,
ball color = gray,text= black,inner sep= 0pt,outer sep= 0pt,minimum size= 5pt}} 
   \Vertex[LabelOut,Lpos=180,x=0 ,y=0,L=${\color{red}h_0}$]{1}
     \Vertex[LabelOut,Lpos=0,x=1 ,y=0,L=${\color{red}h'}$]{2}
       \Vertex[LabelOut,Lpos=180,x=0 ,y=1,L=${\color{red}p}$]{3}
         \Vertex[LabelOut,Lpos=0,x=1 ,y=1,L=${\color{red}x}$]{4}
    \Edge[style={thick},color=blue](1)(2)
      \Edge[style={thick},color=blue](3)(4)
        \Edge[style={thick},color=blue](1)(3)
          \Edge[style={thick},color=blue](2)(4)
    \end{tikzpicture}
    }
is a singular 2-cube, then $x=h_0$.
 \end{itemize}
\end{definition}
\noindent
We view this as a condition that ``forces" an element $h_0$ on the bottom face of a singular cube to also appear on the top face.

\begin{theorem}{\rm (Sufficient Condition for Monophobicity)}\label{suffcondition}
Let  $H$ be a rigid $d$-cube in $G$, with $d \ge 2$.  If $N_G(H)$ contains no subgraphs isomorphic to $K_{23}$, then 
$H$ is monophobic in $G$ if for every $h_0\in H$, either
\begin{enumerate}
\item   forcing  is satisfied at $h_0$, or
\item 
 forcing  fails at $h_0$, but 
 for every $p\in N_G(h_0)-H$ there exists a pair of neighbors $h_1, h_2$ of $h_0$ such that for every $x_1,x_2$ 
with 
paths $p \simeq x_{1}\simeq h_1$ and  $p \simeq x_{2}\simeq h_2$, $x_{1},x_{2}\ne h_0$, the set
$\{x_{1},x_{ 2}, h_{12}\}$ has no common neighbor.
(Here $h_{12}$ denotes the fourth element of the 2-face of $H$ containing $h_0, h_1$, and $h_2$,
and $\simeq$ means that elements in the path may repeat.) 
\end{enumerate}
If $d\le2$, the conditions of this theorem are both necessary and sufficient; if $d>2$ they are sufficient.
\end{theorem}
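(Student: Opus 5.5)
Our plan is to argue by contradiction, as in the proof of Theorem \ref{gridlemma}. Suppose $N_G(H)$ has no $K_{23}$ and the hypothesis holds at every $h_0\in H$, yet there is a singular $(d+1)$-cube $\sigma$ whose only face supported by $H$ is $f^-_{d+1}\sigma$. By Lemma 2.1 we may normalize so that $f^-_{d+1}\sigma$ is a fixed identification of $Q_d$ with $H$. The top face $f^+_{d+1}\sigma$ is then a graph map $\tau$, where $\tau(v)$ lies in the closed neighborhood of $\sigma(v)$ for each bottom vertex $v$. The first step generalizes the ``straight collapse'' argument from the proof of Theorem \ref{dim2theorem}. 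We show that, in the absence of $K_{23}$, every vertex $v$ with $\tau(v)\in H$ satisfies $\tau(v)=\sigma(v)$; otherwise a second face supported by $H$ is forced.

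For this step we follow the walk-around argument. Suppose $\tau(v)=h'\in H$ with $h'\neq\sigma(v)$. By rigidity and adjacency, $h'$ is a neighbor of $\sigma(v)$ in $H$, say in direction $k$. Propagating along $2$-faces, each neighbor $w$ of $v$ in a direction $j\ne k$ has $\tau(w)$ a common neighbor of $\tau(v)$ and $\sigma(w)$. The candidates are then limited to the two vertices of $H$ forming the square $\sigma(v),\sigma(w),h',\cdot$, or to a vertex outside $H$. In the outside case, the outside vertex, together with two vertices of $H$ at distance two, gives a $K_{23}$. Rigidity excludes one of the two $H$-candidates, so $\tau(w)$ is forced. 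Inducting on Hamming distance from $v$, as in the inductive step of the proof of Theorem \ref{gridlemma}, makes $\tau$ agree with $\sigma$ shifted in direction $k$. This exhibits $f_k^{\pm}\sigma$ as a second face supported by $H$, which is the contradiction. We conclude that $\sigma$ is a straight collapse.

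Since $f^+_{d+1}\sigma$ is not supported by $H$, some $h_0=\sigma(v)$ satisfies $\tau(v)=p\notin H$, and then $p\in N_G(h_0)-H$. If forcing holds at $h_0$, take the guaranteed $h'$ and the bottom neighbor $w$ of $v$ with $\sigma(w)=h'$. The square $h_0,h',p,\tau(w)$ is a singular $2$-cube, so the last bullet of Definition \ref{forcingatv} gives $\tau(w)=h_0\in H$. This contradicts the straight-collapse property, because $\tau(w)\ne h'$. If forcing fails, use condition (2). Let $w_1,w_2$ be the bottom neighbors of $v$ mapping to $h_1,h_2$, and let $u$ be the vertex mapping to $h_{12}$. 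Straight collapse gives $\tau(w_i)=x_i$ with $x_i\ne h_0$ and $x_i\notin H\setminus\{h_i\}$; the paths $p\simeq x_i\simeq h_i$ exist. The image $\tau(u)$ is adjacent (or equal) to $x_1$, to $x_2$, and to $\sigma(u)=h_{12}$, so it is a common neighbor of $\{x_1,x_2,h_{12}\}$. This contradicts (2). Hence $H$ is monophobic.

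For necessity when $d\le 2$, the case $d=1$ is a direct check. For $d=2$ we invoke Theorem \ref{dim2theorem}: if the conditions fail, then either $K_{23}$ is present, or at some $h_0$ forcing and (2) both fail for some $p$. In the latter case, the failing data $x_1,x_2$ and a common neighbor $y$ are used to build the top face $(p,x_1,x_2,y)$ over the bottom face $(h_0,h_1,h_2,h_{12})$; one checks that this is a monotolerant witness, or that it contains one of $Q_3,\dots,Q_3^{***}$. We expect the main obstacle to be the first step. In particular, the claim that the out-of-$H$ alternative always creates a $K_{23}$ needs care when $d>2$, since the common-neighbor analysis inside $H$ must rule out candidates arising from higher-dimensional faces. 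We would first try to reduce each propagation step to a $2$-face of $H$, where the $d=2$ argument applies verbatim.
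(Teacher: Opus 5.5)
\textbf{Verdict: genuine gap.} The problem is in your straight-collapse step, but not where you expected it. The out-of-$H$ alternative is harmless in every dimension. Two vertices of $H$ at distance two lie on a unique $2$-face, so a third common neighbour outside $H$ is exactly a $K_{23}$.

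The false claim is that ``rigidity excludes one of the two $H$-candidates.'' For $w=v+\vec e_j$, the candidates are $\sigma(v)$ and the fourth vertex of the $2$-face of $H$ through $\sigma(v),\sigma(w),h'$. Both are adjacent to $\sigma(w)$ and to $h'$, so rigidity rules out neither.
\begin{itemize}
\item In the $d=2$ walk-around, the fourth vertex is excluded only because the side face over $vw$ would then be all of $H$. That fails for $d\ge 3$. Also, the configuration forced there is a quarter-turn that supports the top face, not a shift in direction $k$.
\item In the proof of Theorem~\ref{gridlemma}, the candidate $\sigma(v)$ is excluded because $v$ was chosen with $\sigma(v)$ absent from the top face. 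Your arbitrary $v$ has no such property.
\end{itemize}

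Both candidates genuinely occur at the same $v$. Take $G=H=Q_3$, write $h_S=\sigma(\vec e_S)$ for the bottom and $\tau(\vec e_S)=\sigma(\vec e_S+\vec e_4)$ for the top. The labels
\begin{itemize}
\item $\tau(\vec 0)=\tau(\vec e_1)=\tau(\vec e_{13})=h_1$,
\item $\tau(\vec e_2)=\tau(\vec e_{12})=\tau(\vec e_{123})=h_{12}$,
\item $\tau(\vec e_3)=h_\emptyset$ and $\tau(\vec e_{23})=h_2$
\end{itemize}
define a singular $4$-cube with $k=1$ at $v=\vec 0$. In it, $\tau(\vec e_2)$ takes the shifted value but $\tau(\vec e_3)=\sigma(\vec 0)$. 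Neither $f_1^{-}\sigma$ nor $f_1^{+}\sigma$ is supported by $H$; the second face is $f_3^+\sigma$. So the local induction you describe cannot be carried out.

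\textbf{The missing idea} is the paper's choice of base vertex. Since $f^+_{d+1}\sigma$ is not supported by $H$, some $h_0\in H$ does not appear on the top face; take $v$ with $\sigma(v)=h_0$.
\begin{itemize}
\item If $\tau(v)\in H$, the candidate $h_0$ is excluded at every step. The induction of Theorem~\ref{gridlemma}, with no-$K_{23}$ replacing the grid geometry, then makes a face $f_k^{\pm}\sigma$ supported by $H$.
\item If $\tau(v)=p\notin H$, your forcing and condition~(2) arguments go through verbatim. Replace ``contradicts straight collapse'' by ``contradicts the absence of $h_0$ from the top face'': $\tau(w)=h_0$ is then impossible, and $x_1,x_2\neq h_0$ is automatic.
\end{itemize}
This is the paper's proof, and it makes the global straight-collapse lemma unnecessary.

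That lemma is in fact true, but proving it needs this same missing-label argument plus counting. Every missing label must sit over a position sent outside $H$. This forces $\tau$ to map the displaced positions bijectively onto their own labels. The displaced set is then closed under adjacency, hence empty or all of $Q_d$. If it were all of $Q_d$, no label would be missing and the top face would be supported, so it is empty.

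Your necessity argument for $d=2$ is fine: the singular cube built from the failing data is itself a monotolerant witness.
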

\noindent
Note: condition (2) says that there
is no way to complete a singular $3$-cube with vertices as shown in Figure (\ref{condition2}). Alternatively, one can
express condition (2) as excluding certain of the 2-dimensional configurations appearing in Theorem \ref{maintheorem}. 
We leave details of this translation to the reader.
\begin{figure}
\begin{center}
\begin{tikzpicture}[scale=1.5]
\tikzset{VertexStyle/.style = {shape= circle,
ball color = gray,text= black,inner sep= 0pt,outer sep= 0pt,minimum size= 5pt}} 
   \Vertex[LabelOut,Lpos=180,x=0 ,y=0,L=${\color{blue}h_0}$]{1}
     \Vertex[LabelOut,Lpos=0,x=1 ,y=0,L=${\color{blue}h_1}$]{2}
       \Vertex[LabelOut,Lpos=180,x=.5 ,y=.5,L=${\color{blue}h_2}$]{3}
         \Vertex[LabelOut,Lpos=0,x=1.5 ,y=.5,L=${\color{blue}h_{12}}$]{4}
            \Vertex[LabelOut,Lpos=180,x=0 ,y=1,L=${\color{blue}p}$]{5}
               \Vertex[LabelOut,Lpos=0,x=1 ,y=1,L=${\color{blue}x_{1}}$]{6}i
                  \Vertex[LabelOut,Lpos=180,x=.5 ,y=1.5,L=${\color{blue}x_{2}}$]{7}
                     \Vertex[LabelOut,Lpos=0,x=1.5 ,y=1.5,L=${\color{blue}?}$]{8}
    \Edge[style={thick},color=blue](1)(2)
      \Edge[style={thick},color=blue](3)(4)
        \Edge[style={thick},color=blue](1)(3)
          \Edge[style={thick},color=blue](2)(4)
           \Edge[style={thick},color=blue](5)(6)
      \Edge[style={thick},color=blue](7)(8)
        \Edge[style={thick},color=blue](5)(7)
          \Edge[style={thick},color=blue](6)(8)
          \Edge[style={thick},color=blue](1)(5)
      \Edge[style={thick},color=blue](2)(6)
        \Edge[style={thick},color=blue](3)(7)
          \Edge[style={ thick},color=blue](4)(8)
    \end{tikzpicture}
    \end{center}
    \label{condition2}
    \caption{Condition (2).}
       \label{condition2}
    \end{figure}

\begin{proof}{\rm (Sketch)}
The proof of Theorem \ref{gridlemma} assumes that $N_G(H)$ embeds in a grid neighborhood of $H$. However, a close reading of that proof  shows that only the hypotheses of  Theorem \ref{suffcondition} 
 are used. In fact, forcing always holds at every vertex of  $H$ in a grid neighborhood, and so only condition (1) is used in that case.

If $d=1$, monophobicity is equivalent to avoiding triangles and the result is elementary, so we may
assume that $d\ge 2$. Proceeding as in the proof of Theorem \ref{gridlemma}, we suppose that $\sigma $ is a singular $(d+1)$-cube with $f_{d+1}^-\sigma$ supported by $H$. 
Assuming the conditions of Theorem \ref{suffcondition}, the goal is to show that at least one more face is supported by $H$.  If 
not, there must be a vertex
$h_0\in H$ not appearing in $f_{d+1}^+\sigma$, and we may assume without loss of generality that $\sigma(\vec{0}) = h_0$. 

The first step will be to show that $\sigma(\vec{e}_{d+1}) = h_k$ for some $h_k$ adjacent to $h_0$. Suppose, otherwise,
that $\sigma(\vec{e}_{d+1}) = p$  for some $p \not\in H$.
If condition (1) holds at $h_0$, then
we have a contradiction, since then  $h_0$ is forced to appear in $f_{d+1}^+\sigma$.
If condition (1) fails at $h_0\in H$, then for every $h_k$, $1 \le k \le d$ there is a path 
$p \simeq x_{k} \simeq h_k$ with $x_{k} \ne h_0$. Condition (2) now yields a contradiction, since $\sigma$
must contain a 3-face with vertices labeled as above. Thus we conclude that $\sigma(\vec{e}_{d+1}) = h_k$ for some $h_k$.

Subsequent steps use only the fact that $N_G(H)$ contains no copies of $K_{23}$, and involve  2-faces of $f_{k}^-\sigma$ of the form
  \begin{center}
     \begin{tikzpicture}[scale=1.5]
\tikzset{VertexStyle/.style = {shape= circle,
ball color = gray,text= black,inner sep= 0pt,outer sep= 0pt,minimum size= 5pt}} 
   \Vertex[LabelOut,Lpos=225,x=0 ,y=0,L=${\color{blue}h_S}$]{1}
     \Vertex[LabelOut,Lpos=325,x=1 ,y=0,L=${\color{blue}h_{Sk}}$]{2}
       \Vertex[LabelOut,Lpos=180,x=0 ,y=1,L=${\color{blue}h_{S(d+1)}}$]{3}
         \Vertex[LabelOut,Lpos=0,x=1 ,y=1,L=${\color{blue}X}$]{4}                   
    \Edge[style={thick},color=black](1)(2)
      \Edge[style={thick},color=black](3)(4)
        \Edge[style={thick},color=black](1)(3)
          \Edge[style={thick},color=black](2)(4)      
          \end{tikzpicture} 
    \end{center}
where three of the values (vertices in $H$) are known and the fourth remains to be determined. Since $N_G(H)$
contains no copies of $K_{23}$, it follows that $X$ must be either $h_{S}$ or $h_{Sk(d+1)}$. 
Once this is established, the proof proceeds exactly as in the proof of Theorem \ref{gridlemma} without change.

The proof of necessity for $d=2$ is elementary and left to the reader.
\end{proof}
As an application, note that  Example \ref{mpexamples}$(C)$ can be shown to be
monophobic by an argument 
applying Theorem \ref{suffcondition} in which only the condition (1)  is used.
The role of this condition can be seen by comparing Examples \ref{mpexamples}$(C)$ and \ref{nmpexamples}$(E)$.
\begin{center}
  \begin{tikzpicture}[scale=.7]
   \tikzset{VertexStyle/.style = {shape= circle,
ball color =  red,inner sep= 0pt,outer sep= 0pt,minimum size=5 pt}}     
\Vertex[LabelOut,Lpos=225,x=2 ,y=1.5,L=$\vec{0}$]{1}
   \Vertex[LabelOut,Lpos=315,x=5 ,y=1.5,L=$e_1$]{2}
   \Vertex[LabelOut,Lpos=135,x=2 ,y=4.5,L=$e_2$]{3}
   \Vertex[LabelOut,Lpos=45,x=5 ,y=4.5,L=$e_{12}$]{4}
   \tikzset{VertexStyle/.style = {shape= circle,
ball color =  black,inner sep= 0pt,outer sep= 0pt,minimum size=5 pt}}     
   \Vertex[LabelOut,Lpos=180,x=3.5 ,y=2.5,L=$p$]{5}
   \Vertex[LabelOut,Lpos=0,x=6.5 ,y=2.5,L=$q$]{6}
      \Edge[style={ultra thick},color=red](1)(2)  \Edge[style={ultra thick},color=red](2)(4)  \Edge[style={ultra thick},color=red](3)(4)
    \Edge[style={ultra thick},color=red](1)(3)       
    \Edge(1)(5)\Edge(3)(5)
      \Edge(2)(6)\Edge(4)(6)
     \node  [color=blue] at (4,0) {$(C)$};
  \end{tikzpicture}
  \qquad
    \begin{tikzpicture}[scale=.7]
   \tikzset{VertexStyle/.style = {shape= circle,
ball color =  red,inner sep= 0pt,outer sep= 0pt,minimum size=5 pt}}     
\Vertex[LabelOut,Lpos=225,x=2 ,y=1.5,L=$\vec{0}$]{1}
   \Vertex[LabelOut,Lpos=315,x=5 ,y=1.5,L=$e_1$]{2}
   \Vertex[LabelOut,Lpos=135,x=2 ,y=4.5,L=$e_2$]{3}
   \Vertex[LabelOut,Lpos=45,x=5 ,y=4.5,L=$e_{12}$]{4}
   \tikzset{VertexStyle/.style = {shape= circle,
ball color =  black,inner sep= 0pt,outer sep= 0pt,minimum size=5 pt}}     
   \Vertex[LabelOut,Lpos=180,x=3.5 ,y=2.5,L=$p$]{5}
   \Vertex[LabelOut,Lpos=0,x=6.5 ,y=2.5,L=$q$]{6}
      \Edge[style={ultra thick},color=red](1)(2)  \Edge[style={ultra thick},color=red](2)(4)  \Edge[style={ultra thick},color=red](3)(4)
    \Edge[style={ultra thick},color=red](1)(3)       
    \Edge(1)(5)\Edge(3)(5)
      \Edge(2)(6)\Edge(5)(6)
     \node  [color=blue] at (4,0) {$(C')$};
  \end{tikzpicture}
   \qquad
    \begin{tikzpicture}[scale=.7]
   \tikzset{VertexStyle/.style = {shape= circle,
ball color =  red,inner sep= 0pt,outer sep= 0pt,minimum size=5 pt}}     
\Vertex[LabelOut,Lpos=225,x=2 ,y=1.5,L=$\vec{0}$]{1}
   \Vertex[LabelOut,Lpos=315,x=5 ,y=1.5,L=$e_1$]{2}
   \Vertex[LabelOut,Lpos=135,x=2 ,y=4.5,L=$e_2$]{3}
   \Vertex[LabelOut,Lpos=45,x=5 ,y=4.5,L=$e_{12}$]{4}
   \tikzset{VertexStyle/.style = {shape= circle,
ball color =  black,inner sep= 0pt,outer sep= 0pt,minimum size=5 pt}}     
   \Vertex[LabelOut,Lpos=180,x=3.5 ,y=2.5,L=$p$]{5}
   \Vertex[LabelOut,Lpos=0,x=6.5 ,y=2.5,L=$q$]{6}
      \Edge[style={ultra thick},color=red](1)(2)  \Edge[style={ultra thick},color=red](2)(4)  \Edge[style={ultra thick},color=red](3)(4)
    \Edge[style={ultra thick},color=red](1)(3)       
    \Edge(1)(5)\Edge(3)(5)
      \Edge(2)(6)\Edge(4)(6)\Edge(5)(6)
     \node  [color=blue] at (4,0) {$(E)$};
  \end{tikzpicture}
  \end{center}
Example $(C)$ satisfies condition (1) at every vertex of $H$, and hence $N_G(H)$ is monophobic. 
Example $(C')$ satisfies condition (1) at every vertex except $\vec{0}$ but satisfies condition (2) there. Again we conclude  $N_G(H)$ is monophobic.
Example $(E)$ fails both (1) and (2) at every vertex. It is monotolerant, and admits a witness $\sigma$ in which $\vec{0}$ (for example) 
does not appear on the opposite face and is adjacent to a vertex not in $H$. In the diagram below, the label $q$ is
valid for $(E)$ but not for $(C)$ or $(C')$.
\begin{center}
\begin{tikzpicture}[scale=1.5]
\tikzset{VertexStyle/.style = {shape= circle,
ball color = gray,text= black,inner sep= 0pt,outer sep= 0pt,minimum size= 5pt}} 
   \Vertex[LabelOut,Lpos=180,x=0 ,y=0,L=${\color{blue}\vec{0}}$]{1}
     \Vertex[LabelOut,Lpos=0,x=1 ,y=0,L=${\color{blue}\vec{e}_1}$]{2}
       \Vertex[LabelOut,Lpos=180,x=.5 ,y=.5,L=${\color{blue}\vec{e}_2}$]{3}
         \Vertex[LabelOut,Lpos=0,x=1.5 ,y=.5,L=${\color{blue}\vec{e}_{12}}$]{4}
            \Vertex[LabelOut,Lpos=180,x=0 ,y=1,L=${\color{blue}p}$]{5}
               \Vertex[LabelOut,Lpos=0,x=1 ,y=1,L=${\color{blue}q}$]{6}
                  \Vertex[LabelOut,Lpos=180,x=.5 ,y=1.5,L=${\color{blue}p}$]{7}
                     \Vertex[LabelOut,Lpos=0,x=1.5 ,y=1.5,L=${\color{blue}q}$]{8}
    \Edge[style={thick},color=blue](1)(2)
      \Edge[style={thick},color=blue](3)(4)
        \Edge[style={thick},color=blue](1)(3)
          \Edge[style={thick},color=blue](2)(4)
           \Edge[style={thick},color=blue](5)(6)
      \Edge[style={thick},color=blue](7)(8) 
        \Edge[style={thick},color=blue](5)(7)
          \Edge[style={thick},color=blue](6)(8)
          \Edge[style={thick},color=blue](1)(5)
      \Edge[style={thick},color=blue](2)(6)
        \Edge[style={thick},color=blue](3)(7)
          \Edge[style={ thick},color=blue](4)(8)
    \end{tikzpicture}
    \end{center}
Using Theorem \ref{suffcondition}, one can quickly verify that all five of the graphs in Theorem \ref{dim2theorem} represent 
minimally monotolerant neighborhoods. 
One can also give simple arguments proving  monophobicity of two examples 
mentioned earlier and illustrated in Figure \ref{fig3} below: corner squares in the subdivided cube boundary (Example \ref{ex3.1}) and top squares in a box suspension (Example \ref{ex3.2}).  Neither one of these has a grid neighborhood. The first argument
 requires using condition (2) in Theorem \ref{suffcondition}, but the second uses only condition (1).
\begin{figure}
 \begin{center}
     \begin{tikzpicture}[scale=.7]
   \tikzset{VertexStyle/.style = {shape= circle,
ball color =  red,inner sep= 0pt,outer sep= 0pt,minimum size=5 pt}}     
\Vertex[LabelOut,Lpos=0,x=2 ,y=2,L=$$]{1}
   \Vertex[LabelOut,Lpos=0,x=4 ,y=2,L=$$]{2}
   \Vertex[LabelOut,Lpos=0,x=2 ,y=4,L=$$]{3}
   \Vertex[LabelOut,Lpos=0,x=4 ,y=4,L=$$]{4}
   \tikzset{VertexStyle/.style = {shape= circle,
ball color =  black,inner sep= 0pt,outer sep= 0pt,minimum size=5 pt}}     
   \Vertex[LabelOut,Lpos=0,x=0,y=2,L=$$]{5}
   \Vertex[LabelOut,Lpos=0,x=0 ,y=4,L=$$]{6}
    \Vertex[LabelOut,Lpos=0,x=3 ,y=5,L=$$]{7}
     \Vertex[LabelOut,Lpos=0,x=5 ,y=5,L=$$]{8}
      \Vertex[LabelOut,Lpos=0,x=5 ,y=3,L=$$]{9}
       \Vertex[LabelOut,Lpos=0,x=4 ,y=0,L=$$]{10}
        \Vertex[LabelOut,Lpos=0,x=2 ,y=0,L=$$]{11}   
    \Edge[style={ultra thick},color=red](1)(2)  \Edge[style={ultra thick},color=red](2)(4)  \Edge[style={ultra thick},color=red](3)(4)
    \Edge[style={ultra thick},color=red](1)(3)       
    \Edge(1)(5)
      \Edge(3)(6)\Edge(5)(6)
      \Edge(1)(5) \Edge(5)(6) \Edge(3)(6)
       \Edge(3)(7) \Edge(4)(8) \Edge(2)(9) \Edge(7)(8) \Edge(8)(9) 
        \Edge(1)(11) \Edge(2)(10) \Edge(10)(11)
     \node  at (2.8,-1)  {\color{blue}\it Corner of cube };
  \end{tikzpicture}
\qquad\qquad
     \begin{tikzpicture}[scale=.7]
   \tikzset{VertexStyle/.style = {shape= circle,
ball color =  red,inner sep= 0pt,outer sep= 0pt,minimum size=5 pt}}     
\Vertex[LabelOut,Lpos=0,x=2 ,y=2,L=$$]{1}
   \Vertex[LabelOut,Lpos=0,x=4 ,y=2,L=$$]{2}
   \Vertex[LabelOut,Lpos=0,x=2 ,y=4,L=$$]{3}
   \Vertex[LabelOut,Lpos=0,x=4 ,y=4,L=$$]{4}
   \tikzset{VertexStyle/.style = {shape= circle,
ball color =  black,inner sep= 0pt,outer sep= 0pt,minimum size=5 pt}}     
   \Vertex[LabelOut,Lpos=0,x=6,y=4,L=$$]{5}
   \Vertex[LabelOut,Lpos=0,x=6 ,y=2,L=$$]{6}
    \Vertex[LabelOut,Lpos=0,x=4 ,y=0,L=$$]{7}
     \Vertex[LabelOut,Lpos=0,x=2 ,y=0,L=$$]{8}
      \Vertex[LabelOut,Lpos=0,x=0 ,y=2,L=$$]{9}
       \Vertex[LabelOut,Lpos=0,x=0 ,y=4,L=$$]{10}
        \Vertex[LabelOut,Lpos=0,x=3 ,y=6,L=$$]{11}   
    \Edge[style={ultra thick},color=red](1)(2)  \Edge[style={ultra thick},color=red](2)(4)  \Edge[style={ultra thick},color=red](3)(4)
    \Edge[style={ultra thick},color=red](1)(3)       
        \Edge(5)(6)
      \Edge(1)(9) \Edge(1)(8) \Edge(2)(6) \Edge(2)(7)
       \Edge(4)(5) \Edge(4)(11) \Edge(3)(10) \Edge(3)(11)  \Edge(5)(11) 
        \Edge(10)(11)  \Edge(10)(9)  \Edge(7)(8)
     \node  at (2.8,-1)  {\color{blue}\it Top of suspension };
  \end{tikzpicture}
\end{center}
    \caption{Monophobic Neighborhoods}
       \label{fig3}
    \end{figure}

In fact Theorem \ref{suffcondition} can be used to prove much more. Consider graphs $G_K$ arising from {\em simple quadrangulations $K$ of a $2$-sphere}, i.e., graphs arising as the 
$1$-skeleton of a cellular decomposition of $S^2$  such that every $2$-cell is bounded by a nondegenerate $4$-cycle. (See \cite{BGGMTW} for precise definitions, and \cite{Stu} for illustrations of very large examples.) Then we have the following generalization of Examples \ref{ex3.3} and \ref{ex3.1}.
\begin{corollary}\label{s2cubes}
Suppose that $G$ arises from a simple quadrangulation of $S^2$.  If $G$ contains no subgraphs isomorphic to $K_{23}$ or $Q_3$, then every 
quadrangle in $G$ has a monophobic neighborhood. 
\end{corollary}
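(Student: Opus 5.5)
We will deduce the corollary from Theorem \ref{dim2theorem}: it suffices to show that for a quadrangle $H$ in $G$, the neighborhood $N_G(H)$ contains none of the five excluded configurations $K_{23}$, $Q_3$, $Q_3^*$, $Q_3^{**}$, $Q_3^{***}$ by an $H$-preserving map. $K_{23}$ and $Q_3$ are excluded by hypothesis. (We read the hypothesis as excluding subgraphs, not induced subgraphs, so it also rules out copies with extra edges.)

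First we check that every quadrangle $H$ is a rigid $2$-cube, i.e.\ an induced $4$-cycle. A simple quadrangulation is bipartite, since every face is even and the faces generate the cycle space of the sphere. Hence $G$ has no triangles and no diagonals in any $4$-cycle. Bipartiteness also disposes of $Q_3^*$ and $Q_3^{***}$. Each of these contains an odd cycle among the pictured edges: in $Q_3^*$ the vertex $7$ is adjacent to $5$, $6$ and $4$, and a parity count gives a $5$-cycle; similarly, in $Q_3^{***}$ the edges $2$--$5$ and $2$--$8$ create an odd cycle with the red square. So only $Q_3^{**}$ remains. I will verify the parity claims directly on the pictures. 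If one of them is actually bipartite, I will instead treat it like $Q_3^{**}$ below.

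The main obstacle is $Q_3^{**}$ (and possibly $Q_3^{***}$). As drawn, with $H=1\,2\,4\,3$, the graph $Q_3^{**}$ has two outside vertices $5$ and $7$, with $5$ adjacent to $1,2,7$ and $7$ adjacent to $3,4,5$. This is bipartite. It consists of $H$ together with three further $4$-cycles $1\,2\,5$, $1\,5\,7\,3$ and $2\,5\,7\,4$: the vertex $5$ is a common neighbor of the adjacent pair $1,2$, which is impossible in a triangle-free graph. So the edges $1$--$5$ and $2$--$5$ cannot both be present. I will recheck the picture carefully, since the intended configuration is presumably bipartite, say $5\sim 1$, $7\sim 3$, $5\sim 7$ together with $5\sim 4$ or similar. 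In any bipartite version, the configuration will be $H$ plus two outside vertices forming, together with $H$, a $K_{2,3}$-free, $Q_3$-free graph in which some pair of vertices has three internally disjoint paths of length $2$ or $3$ through $H$.

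The planar/topological argument I would use for this case is the following. In a simple quadrangulation of $S^2$, the $4$-cycle $H$ bounds a face, so $H$ separates the sphere into the face itself and a disk $D$ containing everything else. Together with the face $H$, the extra edges of the configuration form a plane graph whose $4$-cycles are separating (Jordan curve theorem). A counting argument via Euler's formula on the configuration subgraph, using that every face of $G$ inside each region is a $4$-cycle and that $G$ has no $K_{23}$, will show that the region bounded by each new $4$-cycle must be a single face of $G$. The configuration is then the whole of $G$, and $G$ is the $Q_3$ quadrangulation, contradicting the hypothesis.

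Concretely, I would argue that a nonfacial separating $4$-cycle with interior vertices forces, by the degree-$\ge 3$ property of simple quadrangulations and the absence of $K_{23}$, a configuration that eventually yields $K_{23}$. I expect this step, the planarity argument for the one or two surviving excluded graphs, to require the most care.
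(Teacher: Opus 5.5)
Your reduction to Theorem \ref{dim2theorem} via bipartiteness is sound, and it is a genuinely different route from the paper's. However, you have misread $Q_3^{**}$. The planarity/Euler-formula part that you call the main obstacle should be deleted. It is unnecessary, and as written it is not an argument: it concerns an unspecified ``intended'' configuration and asserts without proof that this configuration must exhaust $G$.

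$Q_3^{**}$ is not bipartite. With $5\sim 1,2,7$ and $7\sim 3,4,5$, both $1\,2\,5$ and $3\,4\,7$ are triangles; your ``$4$-cycle'' $1\,2\,5$ has only three vertices. Your own sentence already shows that $Q_3^{**}$ cannot occur in $N_G(H)$: $5$ is a common neighbor of the adjacent pair $1,2$, which is impossible in a triangle-free graph. Likewise $Q_3^*$ contains the triangles $5\,6\,7$ and $3\,4\,7$, and $Q_3^{***}$ contains $1\,2\,5$ and $2\,4\,8$.

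There is no bipartite ``intended'' version to look for. These three graphs are the images of straight-collapse witnesses in which some top vertex satisfies $a_i=i$, or two adjacent top vertices coincide. Either event makes some vertex adjacent to both ends of an edge of $H$.

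So the complete proof is short. $G$ is bipartite, hence every quadrangle is an induced $4$-cycle (a rigid $2$-cube), and none of $Q_3^*,Q_3^{**},Q_3^{***}$ occurs in $N_G(H)$. The hypothesis excludes $K_{23}$ and $Q_3$. Theorem \ref{dim2theorem} then gives monophobicity.

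The paper argues instead through Theorem \ref{suffcondition}. Absence of $K_{23}$ makes that theorem applicable. Absence of $Q_3$ guarantees that the completion excluded by condition (2), a common neighbor of $x_1,x_2,h_{12}$, cannot exist. Hence one of conditions (1), (2) holds at every $h_0\in H$.

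The paper words this as ``condition (1) is satisfied at every $h_0$'', but forcing genuinely fails at degree-$3$ vertices. For example, take $h_0=2$ on the face $1\,2\,6\,3$ of the crown in Example \ref{ex3.3}, with $p=7$, $x_1=4$, $x_2=10$. There it is condition (2) that does the work.

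Your route bypasses this vertex-by-vertex forcing analysis. Given the exact characterization in dimension $2$, the only input from the sphere is bipartiteness. The argument in fact shows more generally that in any bipartite graph, a $4$-cycle (facial or not) is monophobic if and only if its neighborhood contains no $H$-preserving $K_{23}$ or $Q_3$. The paper's route, by contrast, goes through a criterion formulated for all $d$.
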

\begin{proof}
Absence of $K_{23}$ permits Theorem \ref{suffcondition} to be applied. In Theorem \ref{suffcondition}, 
condition (2) never applies, since $G$ can have no subgraphs isomorphic to $Q_3$ . Hence condition (1) is satisfied at every $h_0$ and the result follows.
\end{proof}
 \medskip
 
Motivated by the special role played by $K_{23}$ in characterizing monophobicity in dimension $2$, we 
briefly consider analogs of $K_{23}$ in higher dimensions.  

\begin{definition}\label{kdplus1}
\begin{enumerate}
\item
 For $d\ge 1$,
 $K^{\{d\}}$ is a $d$-cube  $Q_{d}$ plus a single vertex $v$ connected to vertices $\vec{0}$ (in $H$) and its antipodal vertex 
 $\vec{1}$ .
\item
For $d\ge 2$,
 $K_{23}^{\{d\}}$ is a $d$-cube  $Q_{d}$ plus a single vertex $v$ connected to vertices $\vec{e}_1, \vec{e}_2,\dots,\vec{e}_{d} $ in $Q_{d}$. 
 \end{enumerate}
 \end{definition}
 
 \begin{example}\label{kdplus1examples} $K^{\{d\}}, d = 1,2,3$.
\medskip
\begin{center}
 \begin{tikzpicture}[scale=.5]
\tikzset{VertexStyle/.style = {shape= circle,
ball color = red,inner sep= 0pt,outer sep= 0pt,minimum size= 7 pt}}   
\Vertex[LabelOut,Lpos=180,x=2 ,y=2,L=$\vec{0}$]{1}
   \Vertex[LabelOut,Lpos=0,x=5 ,y=2,L=$\vec{1}$]{2}
    \tikzset{VertexStyle/.style = {shape= circle,
ball color =  black,inner sep= 0pt,outer sep= 0pt,minimum size=6 pt}}     
   \Vertex[LabelOut,Lpos=180,x=3.5 ,y=5,L=$v$]{3}
      \Edge[style={ultra thick},color=red](1)(2)  \Edge[style={ thick},color=black](1)(3)  \Edge[style={ thick},color=black](2)(3)
       \node  [color=blue] at (4,0) {$K^{\{1\}}$};
  \end{tikzpicture}
  \qquad\quad
   \begin{tikzpicture}[scale=.5]
\tikzset{VertexStyle/.style = {shape= circle,
ball color = red,inner sep= 0pt,outer sep= 0pt,minimum size= 7 pt}}   
\Vertex[LabelOut,Lpos=180,x=2 ,y=2,L=$\vec{0}$]{1}
   \Vertex[x=5 ,y=2,L=$$]{2}
   \Vertex[x=4 ,y=4,L=$$]{3}
   \Vertex[LabelOut,Lpos=0,x=7 ,y=4,L=$\vec{1}$]{4}
   \tikzset{VertexStyle/.style = {shape= circle,
ball color =  black,inner sep= 0pt,outer sep= 0pt,minimum size=6 pt}}     
   \Vertex[LabelOut,Lpos=180,x=4.5 ,y=6,L=$v$]{5}
      \Edge[style={ultra thick},color=red](1)(2)  \Edge[style={ultra thick},color=red](2)(4)  \Edge[style={ultra thick},color=red](3)(4)
    \Edge[style={ultra thick},color=red](1)(3)       
   \Edge(1)(5)
      \Edge(4)(5)
     \node  [color=blue] at (4,0) {$K^{\{2\}}$};
  \end{tikzpicture}
  \qquad\quad
  \begin{tikzpicture}[scale=.5]
\tikzset{VertexStyle/.style = {shape= circle,
ball color = red,inner sep= 0pt,outer sep= 0pt,minimum size= 7 pt}}   
\Vertex[LabelOut,Lpos=180,x=2 ,y=2,L=$\vec{0}$]{1}
   \Vertex[x=5 ,y=2,L=$$]{2}
   \Vertex[x=4 ,y=4,L=$$]{3}
   \Vertex[x=7 ,y=4,L=$$]{4}
  \Vertex[x=2 ,y=5,L=$$]{5}
   \Vertex[x=5 ,y=5,L=$$]{6}
   \Vertex[x=4 ,y=7,L=$$]{7}
   \Vertex[LabelOut,Lpos=30,x=7 ,y=7,L=$\vec{1}$]{8}
      \tikzset{VertexStyle/.style = {shape= circle,
ball color =  black,inner sep= 0pt,outer sep= 0pt,minimum size=6 pt}}  
    \Vertex[LabelOut,Lpos=180,x=9 ,y=4,L=$v$]{9}
      \Edge[style={ultra thick},color=red](1)(2)  \Edge[style={ultra thick},color=red](2)(4)  \Edge[style={ultra thick},color=red](3)(4)
    \Edge[style={ultra thick},color=red](1)(3)       
   \Edge[style={ultra thick},color=red](1)(5)
      \Edge[style={ultra thick},color=red](2)(6)
       \Edge[style={ultra thick},color=red](3)(7)
        \Edge[style={ultra thick},color=red](4)(8)
   \Edge[style={ultra thick},color=red](5)(6)
    \Edge[style={ultra thick},color=red](7)(8)
     \Edge[style={ultra thick},color=red](5)(7)
     \Edge[style={ultra thick},color=red](6)(8)
          \tikzset{EdgeStyle/.style = {-,bend right=40}}
      \Edge(1)(9)
      \Edge(9)(8)
     \node  [color=blue] at (4,0) {$K^{\{3\}}$};
  \end{tikzpicture}
  \end{center}

 \end{example}
 
 \begin{example}\label{k23dplus1examples}$K_{23}^{\{d\}}, d = 2,3$.
\medskip
\begin{center}
   \begin{tikzpicture}[scale=.5]
\tikzset{VertexStyle/.style = {shape= circle,
ball color = red,inner sep= 0pt,outer sep= 0pt,minimum size= 7 pt}}   
\Vertex[LabelOut,Lpos=180,x=2 ,y=2,L=$\vec{0}$]{1}
   \Vertex[x=5 ,y=2,L=$$]{2}
   \Vertex[x=4 ,y=4,L=$$]{3}
   \Vertex[LabelOut,Lpos=0,x=7 ,y=4,L=$\vec{1}$]{4}
   \tikzset{VertexStyle/.style = {shape= circle,
ball color =  black,inner sep= 0pt,outer sep= 0pt,minimum size=6 pt}}     
   \Vertex[LabelOut,Lpos=180,x=4.5 ,y=6,L=$v$]{5}
      \Edge[style={ultra thick},color=red](1)(2)  \Edge[style={ultra thick},color=red](2)(4)  \Edge[style={ultra thick},color=red](3)(4)
    \Edge[style={ultra thick},color=red](1)(3)       
   \Edge(2)(5)
      \Edge(3)(5)
     \node  [color=blue] at (4,0) {$K_{23}^{\{2\}}$};
  \end{tikzpicture}
  \qquad\quad
  \begin{tikzpicture}[scale=.5]
\tikzset{VertexStyle/.style = {shape= circle,
ball color = red,inner sep= 0pt,outer sep= 0pt,minimum size= 7 pt}}   
\Vertex[LabelOut,Lpos=180,x=2 ,y=2,L=$\vec{0}$]{1}
   \Vertex[x=5 ,y=2,L=$$]{2}
   \Vertex[x=4 ,y=4,L=$$]{3}
   \Vertex[x=7 ,y=4,L=$$]{4}
   \tikzset{VertexStyle/.style = {shape= circle,
ball color =  black,inner sep= 0pt,outer sep= 0pt,minimum size=6 pt}}     
  \Vertex[x=2 ,y=5,L=$$]{5}
   \Vertex[x=5 ,y=5,L=$$]{6}
   \Vertex[x=4 ,y=7,L=$$]{7}
   \Vertex[LabelOut,Lpos=0,x=7 ,y=7,L=$\vec{1}$]{8}
     \Vertex[LabelOut,Lpos=180,x=.3 ,y=.7,L=$v$]{9}
      \Edge[style={ultra thick},color=red](1)(2)  \Edge[style={ultra thick},color=red](2)(4)  \Edge[style={ultra thick},color=red](3)(4)
    \Edge[style={ultra thick},color=red](1)(3)       
   \Edge[style={ultra thick},color=red](1)(5)
      \Edge[style={ultra thick},color=red](2)(6)
       \Edge[style={ultra thick},color=red](3)(7)
        \Edge[style={ultra thick},color=red](4)(8)
   \Edge[style={ultra thick},color=red](5)(6)
    \Edge[style={ultra thick},color=red](7)(8)
     \Edge[style={ultra thick},color=red](5)(7)
     \Edge[style={ultra thick},color=red](6)(8)
        \tikzset{EdgeStyle/.style = {-,bend right}}
     \Edge(9)(2)
        \tikzset{EdgeStyle/.style = {-,bend left}}
     \Edge(9)(3)
     \tikzset{EdgeStyle/.style = {-,bend left}}
     \Edge(9)(5)
     \node  [color=blue] at (4,0) {$K_{23}^{\{3\}}$};
  \end{tikzpicture}
  \end{center}
 \end{example}

\begin{lemma}\label{k23lemma}
For the neighborhoods $K^{\{d\}}$ and $K_{23}^{\{d\}}$ defined above, we have:
\begin{enumerate}
\item
$K^{\{d\}}$ is monotolerant for $d=1,2$ but monophobic for $d\ge 3$.
\item
$K_{23}^{\{d\}}$ is monotolerant for all $d\ge 2$.
\end{enumerate}
\end{lemma}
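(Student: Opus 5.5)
The plan is to treat the two parts separately. For every monotolerant claim I will exhibit an explicit monotolerant witness: a singular $(d+1)$-cube $\sigma$ whose bottom face $f_{d+1}^-\sigma$ is the identity labelling of $H$ and none of whose other faces is supported by $H$. For the monophobic claim ($K^{\{d\}}$, $d\ge 3$) I will verify the hypotheses of Theorem \ref{suffcondition} with only condition (1) in use.

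\emph{$K^{\{d\}}$ for $d=1,2$.} For $d=1$ the triangle witness displayed after Example \ref{nmpexamples} already works. For $d=2$, label $H$ as $1=\vec 0$, $2=\vec e_1$, $3=\vec e_2$, $4=\vec 1$, with $v$ adjacent to $1$ and $4$. I take the bottom face to be $1,2,3,4$ and the top face to be $a_1=v$, $a_2=4$, $a_3=4$, $a_4=v$. Each vertical edge is an edge or a repeat ($1v$, $24$, $34$, $4v$), and every top edge is $v4$. Every face other than the bottom contains $v$ or has at most three distinct labels, so $H$ is supported only once.

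\emph{$K^{\{d\}}$ for $d\ge3$.} First I check the no-$K_{23}$ hypothesis. The only vertex outside $H$ is $v$, and its two neighbours in $H$, namely $\vec 0$ and $\vec 1$, are at Hamming distance $d\ge3$; so no outside vertex is adjacent to two vertices of a common $2$-face. Next I check forcing. At $h_0\notin\{\vec0,\vec1\}$ it holds vacuously, since $N_G(h_0)-H=\emptyset$. At $h_0=\vec 0$ with $p=v$, I choose $h'=\vec e_1$. The common neighbours of $v$ and $\vec e_1$ (a vertex counting as its own neighbour) lie in $\{\vec0,\vec1\}\cap(N(\vec e_1)\cup\{\vec e_1\})=\{\vec 0\}$, because $\vec1\not\sim\vec e_1$ when $d\ge3$. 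The case $h_0=\vec1$ is symmetric, using $h'=\vec 1-\vec e_1$. Theorem \ref{suffcondition} then gives monophobicity. Its sketch already covers all $d\ge2$, so nothing new is needed. The contrast with $d=2$ is precisely that there $\vec 1\sim\vec e_1$, which breaks forcing.

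\emph{$K_{23}^{\{d\}}$, all $d\ge2$.} For $d=2$ the witness is the one given in the proof of Theorem \ref{dim2theorem}. For general $d$ I look for a top-face map $\tau:Q_d\to N_G(H)$ that is a graph map, satisfies $\tau(x)\sim x$ or $\tau(x)=x$, and is chosen so that no side face $f_i^\pm\sigma$ is a bijection onto $H$. The idea is to use $v$ as the image near the origin and a folding (retraction) of $Q_d$ toward the star of $\vec 0$ farther out. A first attempt that satisfies the local constraints is
\[
\tau(\vec0)=\vec e_1,\quad \tau(\vec e_1)=\vec e_1,\quad \tau(\vec e_j)=v\ (j\ge2),\quad \tau(\vec e_{1j})=\vec e_1 .
\]
This has to be extended to all of $Q_d$, for instance by $\tau(\vec e_S)=\vec e_{S\triangle\{1\}}$ or another fold for larger $S$, and adjusted until every edge constraint holds.

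Once $\tau$ is fixed, the check is routine. The top face misses $\vec 0$ or has repeated labels, so it is not supported by $H$. For the side faces, Lemma \ref{lemma0124} says that if a second face were supported there would have to be a rigid agreement pattern. So it suffices to exhibit, in each $f_i^\pm\sigma$, one repeated label or one occurrence of $v$.

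The main obstacle is this construction of $\tau$ for $d\ge3$. Since $v$ is adjacent only to the $\vec e_i$, vertices of $H$ far from $\vec0$ cannot move toward $v$, and the fold must be made consistent across all $2$-faces. If a direct formula resists, I would fall back on an inductive construction in $d$ that extends the $d=2$ witness. Vertices with large support would be sent to themselves or to a neighbour so as to keep the relevant side faces degenerate, and the faces would be checked coordinate by coordinate.
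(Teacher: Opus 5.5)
\textbf{Part (1) is fine.} Your $d=2$ witness, with top face $v,4,4,v$, checks out. For $d\ge3$ you correctly verify that no $K_{23}$ occurs and that forcing holds at every vertex ($h'=\vec e_1$ at $\vec 0$, $h'=\vec 1-\vec e_1$ at $\vec 1$), so Theorem \ref{suffcondition} applies. This differs from the paper, which argues directly:
\begin{itemize}
\item $v$ must occur on the top face, since otherwise $\sigma$ lands in a grid neighborhood and Theorem \ref{gridlemma} applies;
\item $v$ can sit only over $\vec 0$ or $\vec 1$, and a single occurrence again lands in a grid neighborhood;
\item two occurrences would force a short cycle through $v$, but the shortest such cycle has length $d+2$.
\end{itemize}
Your route is shorter, but it rests on Theorem \ref{suffcondition}, whose proof in the paper is only sketched.

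\textbf{The gap is in part (2) for $d\ge3$, which is where the content of the claim lies: you never produce the witness.} The extension you suggest, $\tau(\vec e_S)=\vec e_{S\triangle\{1\}}$ for larger $S$, is not a graph map. You set $\tau(\vec e_{1j})=\vec e_1$, and the rule then gives $\tau(\vec e_{1jk})=\vec e_{jk}$. That vertex is at distance $3$ from $\vec e_1$, although $\vec e_{1j}\sim\vec e_{1jk}$.

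The missing observation is this. Once $\tau(\vec e_j)=v$, the constraints force $\tau(\vec e_{jk})\in N[v]\cap N[\vec e_{jk}]=\{\vec e_j,\vec e_k\}$ (closed neighborhoods). So $\tau$ must delete one coordinate, and deleting the largest one consistently works. The paper's $d=3$ witness realizes the rule
\[
\tau(\vec 0)=\vec e_1,\qquad \tau(\vec e_i)=v \text{ for all } i,\qquad \tau(\vec e_S)=\vec e_{S\setminus\{\max S\}} \text{ for } |S|\ge2 .
\]
Checking it:
\begin{itemize}
\item Each $\tau(x)$ is adjacent to $x$.
\item Edges at $\vec 0$ and at singletons are fine because $v\sim\vec e_i$.
\item If $T=S\cup\{b\}$ with $b\notin S$ and $|S|\ge2$, then either $b\neq\max T$ and $\tau(\vec e_T)=\tau(\vec e_S)+\vec e_b$, or $b=\max T$ and $\tau(\vec e_T)=\vec e_S\sim\tau(\vec e_S)$.
\item Every side face $f_i^{\pm}\sigma$, and the top face, contains a top vertex over some $\vec e_j$, hence the label $v$. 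So no face other than the bottom is supported by $H$.
\end{itemize}

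Your own partial assignment with $\tau(\vec e_1)=\vec e_1$ also extends by this deletion rule. In that case $f_1^+\sigma$ is unsupported because all its labels have first coordinate $1$.

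Finally, you do not need Lemma \ref{lemma0124} for the face check: a face containing $v$ or a repeated label is unsupported by definition.
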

\begin{proof}
Monotolerance of $K^{\{1\}}$ and $K^{\{2\}}\approx K_{23}^{\{2\}}$ has been noted earlier. 
We will first demonstrate that $K^{\{3\}}$ is monophobic, by an argument that generalizes easily to dimensions $d>3$. Suppose
we have a 4-cube $\sigma$ which is monotolerant witness for $K^{\{3\}}$, with bottom face (following $H$) labeled as shown:
\begin{center}
\begin{tikzpicture}[scale=1.5]
\tikzset{VertexStyle/.style = {shape= circle,
ball color = gray,text= black,inner sep= 0pt,outer sep= 0pt,minimum size= 5pt}} 
   \Vertex[LabelOut,Lpos=180,x=0 ,y=0,L=${\color{blue}0}$]{1}
     \Vertex[LabelOut,Lpos=0,x=1 ,y=0,L=${\color{blue}1}$]{2}
       \Vertex[LabelOut,Lpos=180,x=.5 ,y=.5,L=${\color{blue}2}$]{3}
         \Vertex[LabelOut,Lpos=0,x=1.5 ,y=.5,L=${\color{blue}3}$]{4}
            \Vertex[LabelOut,Lpos=180,x=0 ,y=1,L=${\color{blue}4}$]{5}
               \Vertex[LabelOut,Lpos=0,x=1 ,y=1,L=${\color{blue}5}$]{6}
                  \Vertex[LabelOut,Lpos=180,x=.5 ,y=1.5,L=${\color{blue}6}$]{7}
                     \Vertex[LabelOut,Lpos=0,x=1.5 ,y=1.5,L=${\color{blue}7}$]{8}
    \Edge[style={thick},color=blue](1)(2)
      \Edge[style={thick},color=blue](3)(4)
        \Edge[style={thick},color=blue](1)(3)
          \Edge[style={thick},color=blue](2)(4)
           \Edge[style={thick},color=blue](5)(6)
      \Edge[style={thick},color=blue](7)(8)
        \Edge[style={thick},color=blue](5)(7)
          \Edge[style={thick},color=blue](6)(8)
          \Edge[style={thick},color=blue](1)(5)
      \Edge[style={thick},color=blue](2)(6)
        \Edge[style={thick},color=blue](3)(7)
          \Edge[style={ thick},color=blue](4)(8)
    \end{tikzpicture}
    \end{center}
The vertex $v$ must appear somewhere on the top face of $\sigma$, since otherwise $\sigma$ is a singular cube for the ordinary grid graph
$I_8^3$, contradicting Theorem \ref{maintheorem}. Furthermore, $v$ is linked only to vertices $\vec{0}$ and $\vec{1}$ in $G$, and thus
can appear only at one (or both) of the positions corresponding to $0$ and $7$ on the bottom face. If it appears only once, then $\sigma$ is 
a singular cube for a subgraph of $I_8^3$, again contradicting Theorem \ref{maintheorem}. If it appears twice, then the path
of length $3$  between the two occurrences of $v$ in in $\sigma$ induces a cycle of length $\le3$ containing $v$ in $K^{\{3\}}$.  But 
the smallest cycle in $K^{\{3\}}$ containing $v$ has length $5$, a contradiction.  This argument generalizes easily to dimensions $d>3$.

To prove that $K_{23}^{\{d\}}$ is monotolerant for $d\ge 3$ we first construct a monotolerant witness for $d=3$ and then explain how to 
generalize the pattern to all dimensions. For $d=3$, suppose that the vertices of $H$ are labeled as above, and define 
a singular $4$-cube $\sigma$
as follows:

\begin{center}
\begin{tikzpicture}[scale=1.5]
\tikzset{VertexStyle/.style = {shape= circle,
ball color = gray,text= black,inner sep= 0pt,outer sep= 0pt,minimum size= 5pt}} 
   \Vertex[LabelOut,Lpos=180,x=0 ,y=0,L=${\color{blue}0}$]{1}
     \Vertex[LabelOut,Lpos=0,x=1 ,y=0,L=${\color{blue}1}$]{2}
       \Vertex[LabelOut,Lpos=180,x=.5 ,y=.5,L=${\color{blue}2}$]{3}
         \Vertex[LabelOut,Lpos=0,x=1.5 ,y=.5,L=${\color{blue}3}$]{4}
            \Vertex[LabelOut,Lpos=180,x=0 ,y=1,L=${\color{blue}4}$]{5}
               \Vertex[LabelOut,Lpos=0,x=1 ,y=1,L=${\color{blue}5}$]{6}
                  \Vertex[LabelOut,Lpos=180,x=.5 ,y=1.5,L=${\color{blue}6}$]{7}
                     \Vertex[LabelOut,Lpos=0,x=1.5 ,y=1.5,L=${\color{blue}7}$]{8}
    \Edge[style={thick},color=blue](1)(2)
      \Edge[style={thick},color=blue](3)(4)
        \Edge[style={thick},color=blue](1)(3)
          \Edge[style={thick},color=blue](2)(4)
           \Edge[style={thick},color=blue](5)(6)
      \Edge[style={thick},color=blue](7)(8)
        \Edge[style={thick},color=blue](5)(7)
          \Edge[style={thick},color=blue](6)(8)
          \Edge[style={thick},color=blue](1)(5)
      \Edge[style={thick},color=blue](2)(6)
        \Edge[style={thick},color=blue](3)(7)
          \Edge[style={ thick},color=blue](4)(8)
             \node  [color=blue] at (.5,-.5) {$f_4^{-}(\sigma)$};
    \end{tikzpicture}
    \qquad
    \begin{tikzpicture}[scale=1.5]
\tikzset{VertexStyle/.style = {shape= circle,
ball color = gray,text= black,inner sep= 0pt,outer sep= 0pt,minimum size= 5pt}} 
   \Vertex[LabelOut,Lpos=180,x=0 ,y=0,L=${\color{blue}1}$]{1}
     \Vertex[LabelOut,Lpos=0,x=1 ,y=0,L=${\color{blue}v}$]{2}
       \Vertex[LabelOut,Lpos=180,x=.5 ,y=.5,L=${\color{blue}v}$]{3}
         \Vertex[LabelOut,Lpos=0,x=1.5 ,y=.5,L=${\color{blue}1}$]{4}
            \Vertex[LabelOut,Lpos=180,x=0 ,y=1,L=${\color{blue}v}$]{5}
               \Vertex[LabelOut,Lpos=0,x=1 ,y=1,L=${\color{blue}1}$]{6}
                  \Vertex[LabelOut,Lpos=180,x=.5 ,y=1.5,L=${\color{blue}2}$]{7}
                     \Vertex[LabelOut,Lpos=0,x=1.5 ,y=1.5,L=${\color{blue}3}$]{8}
    \Edge[style={thick},color=blue](1)(2)
      \Edge[style={thick},color=blue](3)(4)
        \Edge[style={thick},color=blue](1)(3)
          \Edge[style={thick},color=blue](2)(4)
           \Edge[style={thick},color=blue](5)(6)
      \Edge[style={thick},color=blue](7)(8)
        \Edge[style={thick},color=blue](5)(7)
          \Edge[style={thick},color=blue](6)(8)
          \Edge[style={thick},color=blue](1)(5)
      \Edge[style={thick},color=blue](2)(6)
        \Edge[style={thick},color=blue](3)(7)
          \Edge[style={ thick},color=blue](4)(8)
             \node  [color=blue] at (.5,-.5) {$f_4^{+}(\sigma)$};
    \end{tikzpicture}
    \end{center}
Then $\sigma$ is a singular cube for $K^{\{3\}}$ and is monotolerant since every face (except $f_4^{-}(\sigma)$) contains a vertex
labeled by $v$. 

To obtain a monotolerant witness for $K_{23}^{\{d\}}$, $d>3$, construct a singular $d+1$ cube $\sigma$ as follows. Let $f_{d+1}^{-}(\sigma)$ be a $d$-cube with vertices
labeled by $0,\dots,2^d-1$ as above. To label upper face $f_{d+1}^{+}(\sigma)$, first assign label $1$ to the vertex opposite $0$; then assign label $v$ to all of the vertices opposite $1, 2, 4, \dots, 2^{d-1}$.
Finally, for each remaining vertex labeled $k$ in $f_{d+1}^{-}(\sigma)$, write $k$ in binary notation, drop the last two binary digits, and assign that label to the vertex opposite $k$. This pattern 
produces  a singular cube $\sigma$ for  $K_{23}^{\{d\}}$ and every face except $f_{d+1}^{-}(\sigma)$ contains a vertex labeled $v$.
\end{proof}

It is easy to see that removing any of the edges outside $H$ renders $K_{23}^{\{d\}}$ monophobic, hence it is a 
minimal monotolerant neighborhood for dimension $d$. 

\section{Natural Cycles}

In the examples presented in Section 2, we used the term ``natural cycle'' to mean a cycle in $C_d(G)$ whose terms correspond to terms of 
a cycle in a cellular complex whose existence is evident geometrically. A corresponding cycle in $C_d(G)$ is usually easy to construct in small cases, but can require considerable effort since 
geometric $d$-cubes correspond to injective singular $d$-cubes in $2^d d!$ ways.  

In this section we again consider graphs $G_K$ arising from simple quadrangulations $K$ of a $2$-sphere (defined in Section 5), and  prove that every cellular $2$-cycle in $K$ corresponds to a cycle of singular $2$-cubes in $C_2(G_K)$
with the same coefficients. We emphasize that no claim is being made about the nontriviality of the cycle, only its existence.

\begin{theorem}{\rm (Existence of Natural 2-Cycles)}\label{natcycles}
Suppose that $G_K$ is derived from a simple quadrangulation $K$ of $S^2$.
 Let  $V(G_K) = K_0$ and $ E(G_K) = K_1$, and let $C_n(G_K)$ denote the $Z$-module spanned by singular cubes $\sigma: I_1^n \to G_K$. Then, if $Z = \sum_{i=1}^m \alpha_i z_i$  is a $2$-cycle in $K$ ($\alpha_i \in \ZZ,z_i \in K_2, \partial Z = 0$),
there exists a cycle $\tilde{Z} = \sum_{i=1}^m \alpha_i\tilde{z}_i$ in $C_2(G_K)$ such that for each $i$, $\tilde{z}_i$ is a singular $2$-cube with support equal to the set of vertices of $z_i$.
\end{theorem}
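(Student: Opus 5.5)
The plan is to build, for every oriented $2$-cell $z$ of $K$, a single singular $2$-cube $\tilde z$ whose boundary in $C_1(G_K)$ is controlled by the cellular boundary of $z$, and then to let linearity finish the argument. The difficulty is that the boundary operator on singular $1$-cubes does not identify the two orientations of an edge. A singular $1$-cube $a\,b$ (with $\sigma(-1)=a$, $\sigma(+1)=b$) and its reverse $b\,a$ are distinct generators of $C_1(G_K)$. So I will not try to match boundaries on the nose. Instead I will work modulo the submodule $D_1$ generated by degenerate $1$-cubes and by the sums $ab+ba$, using a correction argument. Taking the normalized (reduced) chain complex of \cite{BCW}, in which degenerate cubes are zero, I will show that the symmetrized elements $ab+ba$ are themselves boundaries of degenerate-free $2$-chains. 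This lets me adjust the lifts without changing their supports.

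First I will fix orientations. Give each $2$-cell $z$ the orientation induced from a fixed orientation of $S^2$. Because $K$ is simple, $z$ has four distinct vertices $v_1v_2v_3v_4$ in cyclic order, and the induced subgraph on them is a $4$-cycle; no chord is possible, since a chord would create a triangle inside a $4$-face of a simple quadrangulation. Define $\tilde z$ by $\sigma(-,-)=v_1$, $\sigma(+,-)=v_2$, $\sigma(+,+)=v_3$, $\sigma(-,+)=v_4$. From the boundary formula $\partial\sigma=\sum_i(-1)^i(f_i^--f_i^+)\sigma$, the four faces of $\tilde z$ are the edges $v_1v_4$, $v_2v_3$, $v_1v_2$, $v_4v_3$ with signs $-,+,+,-$. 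Hence $\partial\tilde z=v_1v_2+v_2v_3-v_4v_3-v_1v_4$: each edge appears once, and its sign agrees with the cellular orientation up to replacing $xy$ by $-yx$.

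Next I will define a map $\rho: C_1(G_K)\to K_1$ by $xy\mapsto[x,y]$ (the oriented cellular edge), and send degenerate cubes to $0$. By the computation above, $\rho(\partial\tilde z)=\partial z$. Therefore, for a cellular cycle $Z=\sum\alpha_iz_i$, the element $c=\partial\sum\alpha_i\tilde z_i$ lies in $\ker\rho$. The kernel of $\rho$ is spanned by the elements $xy+yx$. Within the edge coefficient of $c$, the two orientations appear with coefficients $\beta$ and $\beta'$ satisfying $\beta=\beta'$ (this is exactly what $\rho(c)=0$ says). So $c=\sum_e\beta_e(xy+yx)$.

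The main obstacle is removing these symmetric terms while keeping the exact form $\sum\alpha_i\tilde z_i$ of the statement. My approach is to choose the parametrizations cleverly rather than add correction terms. Each $z_i$ admits eight parametrizations $P\tilde z_i$, $P\in\HH_2$. The rotations $P_1P_{12}$ preserve the $\chi$-type orientation sign, and I expect them to change which edge of $z_i$ is traversed "backwards" in $\partial(P\tilde z_i)$. For $\tilde z$ above, the backwards edges are $v_4v_3$ and $v_1v_4$, i.e. the edges at $\sigma(-,+)$. For each edge $e$ shared by cells $z_i$ and $z_j$ (which carry opposite induced orientations), I need the two lifts to traverse $e$ as the same singular $1$-cube, with opposite signs, so that they cancel exactly. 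Concretely, the vertex of $z_i$ placed at position $(-,+)$ must be chosen so that each edge is "backwards" in exactly one of its two adjacent cells.

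This becomes a combinatorial choice: in each quadrangle, pick a corner (the two edges at that corner become the backwards edges) so that every edge of $K$ is backwards in exactly one adjacent face. I would try to realize it using the bipartiteness of quadrangulations of $S^2$. Color the vertices black and white; then every face has two black and two white corners. Orient each edge from black to white, and in each face choose the corner so that the backwards edges are exactly those whose cellular orientation disagrees with black-to-white. Around a face the orientation alternates, so exactly two consecutive edges disagree, and they meet at one corner, which gives a consistent choice. An edge's disagreement status flips between its two faces, because their induced orientations are opposite. Hence each edge is backwards in exactly one adjacent face, and the boundaries cancel termwise. Verifying that "two disagreeing edges are adjacent" matches the sign pattern of $\partial\tilde z$ is the delicate point that I would check carefully.
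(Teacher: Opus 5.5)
Your cancellation criterion is correct and matches the paper's mechanism. Give the cells the coherent orientation; then all coefficients agree, because $\partial Z=0$ on the connected sphere. Take every lift to be a rotation of your $\tilde z$. Then $\sum\alpha_i\tilde z_i$ is a cycle when every edge is ``backwards'' in exactly one of its two faces, where the backwards edges of a face are the two edges at the corner placed at $(-,+)$.

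The gap is the construction of the corner choice, at exactly the point you marked as delicate, and it fails there. Colors alternate around a face $v_1v_2v_3v_4$; say $v_1,v_3$ are black. The cellular orientation then runs black-to-white on $v_1v_2$ and $v_3v_4$, and white-to-black on $v_2v_3$ and $v_4v_1$. So the two disagreeing edges are \emph{opposite}, not consecutive, and there is no corner to choose.

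Another parametrization cannot absorb this. Any singular $2$-cube supported on a face is one of the eight symmetric parametrizations. In $\partial\sigma$ its four $1$-cubes always run from $\sigma(-,-)$ to $\sigma(+,+)$ along the two paths of length two. Cancellation across an edge requires the \emph{same} singular $1$-cube with opposite signs. So what you need is a global orientation of the edges of $G_K$ in which every face has a source and a sink at opposite corners. The black-to-white orientation instead makes both black corners of every face sources; check the bottom face of $Q_3$.

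The missing idea is how to produce such an orientation, and the paper's dual-graph argument supplies it. The dual $G_K^*$ is $4$-regular. Decompose its edges into closed straight-ahead walks: a walk entering a face through an edge leaves through the opposite edge. Orient each walk. In every face the walks leave through one edge of each opposite pair, so the two exit edges meet at a corner. Take that corner as your backwards corner, i.e.\ put it at position $(-,+)$. Each edge of $K$ is crossed exactly once by exactly one walk, so it is an exit edge of exactly one of its two faces, which is your cancellation criterion.

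With this substitution the rest of your argument goes through. The detour through $\rho$, $\ker\rho$ and boundaries of $ab+ba$ can be dropped, since correction terms are not allowed in the exact form $\sum\alpha_i\tilde z_i$ anyway.
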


\begin{proof}{\rm (Sketch)}
Consider the dual graph $G^*_K$ of $G_K$, whose vertices are the 2-cells of $K$ and whose edges are adjacent pairs of 2-cells. This graph partitions $S^2$ into regions corresponding to the original 0-cells of $K$, in which each vertex has degree 4.  Elementary graph theory shows that we may 
cover the edges of $G^*_K$ by a collection of disjoint directed cycles in which edges entering a vertex always leave by the opposite edge (a local condition). Now associate to every vertex $z \in V(G^*_K)$ a singular cube $\tilde{z}$ according to the following rule,  which picks a  vertex to ``anchor'' the cube among four possible orientation-preserving rotations.
\begin{center}
\begin{tikzpicture}[>=stealth,scale=.6]
\draw[->, very thick] (0,.3) -> (0,2) node[above]{};
\draw[->,very  thick] (.3,0) -> (2,0) node[right]{};
\draw[->,very  thick] (-2,0) -> (-.3,0) node[right]{};
\draw[->, very thick] (0,-2) -> (0,-.3) node[right]{};
\node [rectangle, draw] at (0,0) {$z$};
\node at (1,1) {\color{red}$a$};
\node at (-1,1) {\color{red}$b$};
\node at (-1,-1) {\color{red}$c$};
\node at (1,-1) {\color{red}$d$};
\node at (3.2,0) {$\Longrightarrow $};
\draw (4.5,-1) rectangle (6.5,1);
\draw (4.5,0) -- (6.5,0);
\draw (5.5,-1) -- (5.5,1);
\node at (6,-.5) {\color{red}$a$};
\node at (6,.5) {\color{red}$b$};
\node at (5,.5) {\color{red}$c$};
\node at (5,-.5) {\color{red}$d$};
\end{tikzpicture}
\end{center}
This rule associates every cell in $K$ to a singular $2$-cube, and it is straightforward to check,
examining all possibilities, that boundary edges of adjacent cells always cancel (they have the same orientation but 
opposite sign).
 Hence  $\tilde{Z} = \sum_{i=1}^m \alpha_i\tilde{z}_i$ is a cycle in $C_2(G_k)$, as claimed.
\end{proof}
Figure (\ref{xxx}) above illustrates steps in the construction for the 10-vertex crown, regarded as the 2-skeleton of a quadrangular 2-complex, 
from its corresponding dual graph. Note that the cycle of singular cubes differs (uniformly by rotations) from the one presented 
in Example \ref{ex3.3}.
Since every rigid cube in $G_K$ is monophobic, we may conclude (as was done in \cite{BGJW1}) that the cycle is nontrivial.
\begin{figure}\label{dualgraph}
\medskip
\begin{center}
\begin{tikzpicture}[scale=1]
\tikzset{VertexStyle/.style = {shape= circle,shading = none,
ball color = black,text= black,inner sep= 0pt,outer sep= 0pt,minimum size= 6 pt}}
   \Vertex[LabelOut,Lpos=180,x=1.5,y=0,L=${\color{red}1}$]{1}
   \Vertex[LabelOut,Lpos=180,x=0 ,y=1,L=${\color{red}2}$]{2}
   \Vertex[LabelOut,Lpos=180,x=1,y=1,L=${\color{red}3}$]{3}
    \Vertex[LabelOut,Lpos=180,x=2,y=1,L=${\color{red}4}$]{4}
     \Vertex[LabelOut,Lpos=180,x=3,y=1,L=${\color{red}5}$]{5}
      \Vertex[LabelOut,Lpos=180,x=0,y=2,L=${\color{red}6}$]{6}
      \Vertex[LabelOut,Lpos=180,x=1,y=2,L=${\color{red}7}$]{7}
      \Vertex[LabelOut,Lpos=180,x=2,y=2,L=${\color{red}8}$]{8}
          \Vertex[LabelOut,Lpos=180,x=3,y=2,L=${\color{red}9}$]{9}
                 \Vertex[LabelOut,Lpos=180,x=1.5,y=3,L=${\color{red}10}$]{10}
    \Edge[style={thick},color=black](1)(2)
    \Edge[style={thick},color=black](1)(3)  \Edge[style={thick},color=black](1)(4)  \Edge[style={thick},color=black](1)(5)  
    \Edge[style={thick},color=black](2)(6) \Edge[style={thick},color=black](2)(7) \Edge[style={thick},color=black](3)(8)\Edge[style={thick},color=black](3)(6)
    \Edge[style={thick},color=black](4)(7) \Edge[style={thick},color=black](4)(9)\Edge[style={thick},color=black](5)(8)\Edge[style={thick},color=black](5)(9)
    \Edge[style={thick},color=black](6)(10)\Edge[style={thick},color=black](7)(10)\Edge[style={thick},color=black](8)(10)\Edge[style={thick},color=black](9)(10)
    \end{tikzpicture}
    \qquad
    \raisebox{4em}{$\Longrightarrow$}
    \raisebox{0em}{
    \begin{tikzpicture}[scale=.6]
\tikzset{VertexStyle/.style = {shape= rectangle,shading = none,
ball color = white,text= white,inner sep= 5pt,outer sep= 0pt,minimum size= 6 pt}}
   \Vertex[LabelOut,Lpos=180,x=0,y=2,L=$$]{1}
   \Vertex[LabelOut,Lpos=180,x=4 ,y=2,L=$$]{2}
   \Vertex[LabelOut,Lpos=180,x=2,y=3,L=$$]{3}
    \Vertex[LabelOut,Lpos=180,x=1,y=4,L=$$]{4}
     \Vertex[LabelOut,Lpos=180,x=3,y=4,L=$$]{5}
      \Vertex[LabelOut,Lpos=180,x=2,y=5,L=$$]{6}
      \Vertex[LabelOut,Lpos=180,x=0,y=6,L=$$]{7}
      \Vertex[LabelOut,Lpos=180,x=4,y=6,L=$$]{8}
    \Edge[style={thick,->,bend right},color=black](1)(2)
    \Edge[style={thick,->},color=black](1)(3)  
       \Edge[style={thick,->},color=black](3)(2) 
     \Edge[style={thick,->},color=black](4)(1)  
    \Edge[style={thick,->,bend right},color=black](7)(1)  
     \Edge[style={thick,->},color=black](4)(3)  
        \Edge[style={thick,->},color=black](2)(5)  
    \Edge[style={thick,->},color=black](3)(5)  
     \Edge[style={thick,->},color=black](6)(4)  
    \Edge[style={thick,->},color=black](5)(6) 
     \Edge[style={thick,->},color=black](5)(8)
      \Edge[style={thick,->},color=black](7)(4)
         \Edge[style={thick,->},color=black](6)(7)
      \Edge[style={thick,->},color=black](8)(6)
       \Edge[style={thick,->,bend right},color=black](2)(8)
        \Edge[style={thick,->,bend right},color=black](8)(7)
                         \node  [color=red] at (2,2) {$6$};
          \node  [color=red] at (2,4) {$1$};
 \node  [color=red] at (2,6) {$9$};
 \node  [color=red] at (3,3) {$3$};
 \node  [color=red] at (1,3) {$2$};
 \node  [color=red] at (0,4) {$7$};
  \node  [color=red] at (4,4) {$8$};
    \node  [color=red] at (1,5) {$4$};
      \node  [color=red] at (3,5) {$5$};
        \node  [color=red] at (-1.3,5) {$10$};
  \end{tikzpicture}
  }
  \end{center}
 
 {\tiny
 \[
 \Longrightarrow\qquad
 \begin{array}{|c|c|}
 \hline
 2&1\\
 \hline
6&3\\
 \hline
 \end{array}\;+
  \begin{array}{|c|c|}
 \hline
 4&1\\
 \hline
 7&2\\
 \hline
\end{array}
\;+
  \begin{array}{|c|c|}
 \hline
 3&1\\
 \hline
 8&5\\
 \hline
\end{array}
\;+
  \begin{array}{|c|c|}
 \hline
 5&1\\
 \hline
 9&4\\
 \hline
\end{array}
\;+
 \begin{array}{|c|c|}
 \hline
 7&2\\
 \hline
 10&6\\
 \hline
\end{array}
\;+
 \begin{array}{|c|c|}
 \hline
 6&3\\
 \hline
 10&8\\
 \hline
\end{array}
\;+
 \begin{array}{|c|c|}
 \hline
 9&4\\
 \hline
 10&7\\
 \hline
\end{array}
\;+
 \begin{array}{|c|c|}
 \hline
8&5\\
 \hline
 10&9\\
 \hline
\end{array}
\]
}\caption{Constructing a singular 2-cycle from $G_K$ via the dual graph of $K$.}
\label{xxx}
\end{figure}

It is tempting to conjecture that an analog of Theorem \ref{natcycles} holds in all dimensions. By analogy with the cellular case, 
one might expect that, 
for a ``simple'' cubulation $K$ of $S^d$ there is a cycle in $C_d(G_K)$ 
with all coefficients equal to $+1$, in which singular
$d$-cubes correspond directly to $d$-cells in $K$. 
However,  this result appears to be false in dimension 3, even for an example as simple as 
the cellular decomposition of $\partial Q_4$ by its $3$-faces:  computation shows that
none of the $48^8$ possible orientations of $3$-faces yields a positive sum in $C_3(\partial Q_4)$ with boundary zero. 

Notwithstanding the above, there are clearly cycles with coefficients $\pm 1$ in $C_3(Q_4)$, obtained, 
for example, from the discrete boundary of $Q_4$. 
More generally, boundary cycles obtained from $Q_d, d \ge 2$, can be used as a template for 
constructing  ``natural" $d$-cycles in $C_d(G_m^d)$  for all $ d > 1, m \ge 3$, with all
coefficients $\pm 1$.  Here $G_m^d$  denotes the subdivided cube, defined in Example \ref{ex3.1}.  Since   
these cycles contain terms supported by cubes with monophobic neighborhoods (Theorem \ref{gridlemma}),  it follows from Theorem \ref{maintheorem} (as claimed in Section 2) that these cycles are nontrivial in all dimensions.

Some of the issues discussed above have been overshadowed by recent work computing homology groups
explicitly, thus proving much more than nontriviality. 
Specifically, \cite{CK26} computes the homology of subdivided cubes for many $m \ge 2,d\ge 3$ and shows that $H_d(G_m^d) \approx \ZZ$.  
Also, combining Theorem \ref{suffcondition} with  results in \cite{BJ26}  
proves that if $G_K$ is the 1-skeleton of a simple quadrangulation $K$ of $S^2$, then 
$H_2(G_K) \cong H_2^{\mathrm Cell}(K)$ provided $K_{23}$  and $Q_3$ do not occur in $G_K$.  
Despite these important results, several questions in the spirit of this section may remain of interest. We propose the following:
\begin{problem}\label{natquests}
For $d>2$, define a cubulation $K$ of $S^d$ to be {\em simple} if the $1$-skeleton of every $d$-cell is isomorphic to $Q_d$. 
Then:
\begin{enumerate} \item For which simple cubulations of $S^d$, is every rigid $d$-cube in $G_K$ monophobic?
\item For a simple cubulation of $S^d$, is there always a cycle in $C_d(G_K)$ in which every cell is represented by a singular $d$-cube
with coefficient $\pm 1$?  
\item
Under what conditions does a simple cubulation of $S^d$ have a cycle with all coefficients $+1$?
\end{enumerate}
\end{problem}

\section{Possible Future Directions}

Our identification of monophobicity in Examples \ref{ex3.3}, \ref{ex3.1} and \ref{ex3.2} can be seen as first steps in a program of identifying graphical analogs of geometric spheres. When this project began, the exact determination of discrete homology and homotopy groups for graphs as small as those illustrated in Examples  \ref{ex3.3}, \ref{ex3.1}, and \ref{ex3.2} was only barely within the scope of existing computational methods.  As noted above, much progress has been made recently, e.g. \cite{KK24}, and especially \cite{CK26} which includes many explicit computations of homotopy groups of  suspensions and subdivided cubes in higher dimensions. 

Continuing this program, it seems desirable to find general neighborhood conditions (in all dimensions) with  properties similar to monophobicity, i.e. forcing the nontriviality of local homology and homotopy. A  step (albeit small) in this direction would be extending Theorem \ref{dim2theorem} to higher dimensions.  Although Lemma \ref{k23lemma} may be a promising start, computations in dimension 3 reveal many other minimal monotolerant examples, and a classification as simple as Theorem \ref{dim2theorem} may not be possible. 

Example \ref{ex3.2} hints at the possibility of extending all of the results in this paper to configurations involving both quadrilaterals and triangles in dimension $2$ and generalizations in higher dimensions. This would of course require a revised and more inclusive definition of ``monophobicity" for such cells.

Finally,  it will be extremely interesting to identify the exact relationship between our characterization of minimal monotolerant 2-neighborhoods (Theorem \ref{dim2theorem}) and the more general computational tools for dimension 2 recently developed by Ender and Kapulkin (\cite{EK25}, \cite{EK26}). Their results include a list of ``basic 3-shapes'' for computation that contains (as graphs) the five neighborhoods listed in our Theorem \ref{dim2theorem}. The similarity of these lists is certainly more than coincidental.

\section{Acknowledgments}

The authors are grateful for advice and significant contributions from Eric Babson, in particular for the statement and proof of Theorem \ref{maintheorem}(2), and for the dual graph formulation
of the proof of Theorem \ref{natcycles}.
We also acknowledge helpful input from H\'{e}l\`{e}ne  Barcelo, Abdul Jarrah, Volkmar Welker,  Georg Wille, Chris Kapulkin,  Daniel Carranza, Chris Staecker, and Danish Ali.
   
\bibliographystyle{plain}
\bibliography{CGBib.bib}
\end{document}